\begin{document}

\title[Tail bounds for counts of zeros]{Tail bounds for counts of zeros and eigenvalues, and an application to ratios}

\author{Brad Rodgers}

\address{Institut f\"{u}r Mathematik, Universit\"{a}t Z\"{u}rich, Winterthurerstr. 190, CH-8057 Z\"{u}rich}
\email{rbrad@umich.edu}

\maketitle

\theoremstyle{plain}

\newtheorem{theorem}{Theorem}[section]
\newtheorem{lem}[theorem]{Lemma}
\newtheorem{prop}[theorem]{Proposition}
\newtheorem{cor}[theorem]{Corollary}
\newtheorem{conj}[theorem]{Conjecture}
\newtheorem{dfn}[theorem]{Definition}

\newcommand\MOD{\textrm{ (mod }}
\newcommand\Var{\mathrm{Var}}
\newcommand\spann{\mathrm{span}}
\newcommand\Prob{\mathop{\vcenter{\hbox{\relsize{+2}$\mathbf{P}$}}}}
\newcommand\Tr{\mathrm{Tr}}
\newcommand\Ss{\mathcal{S}}
\newcommand\Dd{\mathcal{D}}
\newcommand\Uu{\mathcal{U}}
\newcommand\supp{\mathrm{supp}\;}
\newcommand\sgn{\mathrm{sgn}}
\newcommand\bb{\mathbb}
\newcommand\pvint{-\!\!\!\!\!\!\!\int_{-\infty}^\infty}
\def\res{\mathop{\mathrm{Res}}}
\newcommand\Z{\mathcal{Z}}
\newcommand\E{\mathcal{E}}
\newcommand\law{\mathrel{\overset{\makebox[0pt]{\mbox{\normalfont\tiny\sffamily law}}}{=}}}

\begin{abstract}
Let $t$ be random and uniformly distributed in the interval $[T,2T]$, and consider the quantity $N(t+1/\log T) - N(t)$, a count of zeros of the Riemann zeta function in a box of height $1/\log T$. Conditioned on the Riemann hypothesis, we show that the probability this count is greater than $x$ decays at least as quickly as $e^{-Cx\log x}$, uniformly in $T$. We also prove a similar results for the logarithmic derivative of the zeta function, and likewise analogous results for the eigenvalues of a random unitary matrix.

We use results of this sort to show on the Riemann hypothesis that the averages
$$
\frac{1}{T} \int_T^{2T} \Bigg| \frac{\zeta\Big(\frac{1}{2} + \frac{\alpha}{\log T} + it\Big)}{\zeta\Big(\frac{1}{2}+ \frac{\beta}{\log T} + it\Big)}\Bigg|^m\,dt
$$
remain bounded as $T\rightarrow\infty$, for $\alpha, \beta$ complex numbers with $\beta\neq 0$. Moreover we show rigorously that the local distribution of zeros asymptotically controls ratio averages like the above; that is, the GUE Conjecture implies a (first-order) ratio conjecture. 
\end{abstract}

\section{Introduction}
\label{1}

\subsection{}
This paper is comprised of two parts. In the first part we prove, conditioned on the Riemann hypothesis (RH), that local linear statistics of the zeros of the Riemann zeta function have uniformly sub-exponential tails. More precisely, label the non-trivial zeros of the zeta function $1/2+i\gamma,$ with $\gamma\in\mathbb{R}$. 
We prove the following theorem.

\begin{theorem}[Tail bound for zeros]
\label{quad_bound} 
Assume RH. Define $Q(\xi):=1/(1+\xi^2)$. Then for all $x \geq 2$ and all $T \geq 2$.
$$
\frac{1}{T}\,\mathrm{meas}\Big\{ t\in [T,2T]:\, \sum_\gamma Q\Big(\frac{\log T}{2\pi}(\gamma-t)\Big) \geq x\Big\} \ll e^{-C x\log x},
$$
where the constant $C$ and the implicit constant are absolute.
\end{theorem}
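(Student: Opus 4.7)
The approach is the method of moments: reduce the zero sum to a short Dirichlet polynomial in primes, bound its high moments, and extract the tail bound via Markov's inequality with a choice of $k$ depending on $x$. Set $X(t):=\sum_\gamma Q(\tfrac{\log T}{2\pi}(\gamma-t))$.

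Since $Q(\xi)=1/(1+\xi^2)$ is the Poisson kernel up to scaling, the Hadamard product for $\zeta$ yields, on RH,
\[
X(t)\;=\;\tfrac{2\pi}{\log T}\,\mathrm{Re}\,\tfrac{\zeta'}{\zeta}\!\Big(\tfrac{1}{2}+\tfrac{2\pi}{\log T}+it\Big)\;+\;O(1),
\]
after noticing that the archimedean $\Gamma'/\Gamma$ contribution combines with the mean density of zeros to give only an $O(1)$ discrepancy. Equivalently, the Guinand--Weil explicit formula applied to $\phi_t(\xi)=Q(\tfrac{\log T}{2\pi}(\xi-t))$ expresses $X(t)$, up to an $O(1)$ remainder, as the prime sum
\[
-\tfrac{2\pi}{\log T}\sum_n \tfrac{\Lambda(n)}{\sqrt{n}}\,\widehat{Q}\!\big(\tfrac{2\pi\log n}{\log T}\big)\cos(t\log n),
\]
whose Fourier weight $\widehat{Q}(\eta)=\pi e^{-|\eta|}$ provides an exponential truncation at primes of size $\asymp T^{1/(2\pi)}$.

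The next step is to estimate $\tfrac{1}{T}\int_T^{2T}X(t)^{2k}\,dt$. Expanding the $2k$-th power of the prime sum and invoking the Montgomery--Vaughan mean value theorem, the diagonal terms $n_1\cdots n_k=m_1\cdots m_k$ dominate, and an Euler-product calculation of the associated generating series $\sum_n\Lambda(n)^2/n^{1+2\sigma}$ produces the bound
\[
\tfrac{1}{T}\int_T^{2T}X(t)^{2k}\,dt\;\le\;(Ck)^k\qquad\text{for }1\le k\le c\log T.
\]
Markov's inequality then yields $\tfrac{1}{T}\,\mathrm{meas}\{t\in[T,2T]:X(t)\ge x\}\le(C_1 k/x^2)^k$ for each admissible $k$. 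For $x\le\sqrt{\log T}$ the unconstrained optimum $k\asymp x^2$ is permitted and produces a Gaussian $e^{-cx^2}$, which is stronger than $e^{-Cx\log x}$. For the remaining range $\sqrt{\log T}\le x\le C\log T$ (the upper bound being the Riemann--von Mangoldt trivial estimate for the zeros in an interval of length $1$), one takes $k\asymp\log T$; a short computation with $k/x^2\asymp\log T/x^2$ gives exactly $e^{-Cx\log x}$ after optimising constants.

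The main obstacle is obtaining the moment bound in the range $1\le k\le c\log T$. A naive diagonal--off-diagonal analysis controls the off-diagonal contribution only while the total length $\asymp N^{2k}$ of the expanded polynomial is at most $T^{1-\epsilon}$, which for $N\asymp T^{1/(2\pi)}$ restricts $k$ to $O(1)$. To reach $k\asymp\log T$ one should instead majorise $Q$ by a test function whose Fourier transform has arbitrarily small but fixed support (shortening the effective prime sum to length $\le T^{\epsilon/k}$ at the cost of a worse but still controllable approximation), and then track the combinatorics of the diagonal carefully so that the constant in $(Ck)^k$ comes out to be absolute. Handling this majorant--error trade-off cleanly, and in particular ensuring that the $O(1)$ error in approximating $X(t)$ does not blow up after raising to the $2k$-th power with $k\asymp\log T$, is where the bulk of the technical work lies.
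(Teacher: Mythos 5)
There is a genuine gap, and it is precisely in the step you flag as "where the bulk of the technical work lies": the quantitative accounting of the majorant trade-off. Your plan rests on the moment bound $\tfrac1T\int_T^{2T}X(t)^{2k}dt\le (Ck)^k$ for all $1\le k\le c\log T$, applied with $k\asymp x^2$ (small $x$) or $k\asymp\log T$ (large $x$). But this bound is not provable by the route you describe. Since $\widehat{Q}$ has unbounded support (only exponential decay), the prime sum representing $X(t)$ has length a power of $T$, and, as you note, off-diagonal terms are only controllable for $k=O(1)$. The fix — majorising $Q$ by a function whose Fourier transform is supported in $[-1/k,1/k]$, as in the paper's $G_k(\xi)=G(\xi/k)$ — couples the admissible moment order to the support (moments of order $2\ell$ are controlled only for $2\ell\lesssim k$), and, crucially, it is \emph{not} free up to "a worse but still controllable approximation" plus an $O(1)$ error: such a majorant has integral $\asymp k$, so the smooth density term it picks up (the paper's $\langle G_k,\mathcal{Z}^o\rangle$, which is $\asymp k$ by Lemma \ref{pointwise_regular}) produces an additive shift of size $\asymp k$ in the quantity being bounded, not an $O(1)$ one. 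One is therefore forced to bound $\mathbb{P}(\langle Q,\mathcal{Z}\rangle\ge x)$ by $\mathbb{P}(\langle G_k,\widetilde{\mathcal{Z}}\rangle\ge x-B_3k)$, which is useless unless $k\lesssim x$. With that constraint the optimal choice is $k\asymp x$ (not $k\asymp x^2$, not $k\asymp\log T$), and the Markov step then yields exactly $e^{-Cx\log x}$ — which is why the paper obtains this rate and explicitly remarks that a sub-Gaussian bound $e^{-cx^2}$ for zeta zeros appears genuinely harder. Your claimed Gaussian regime for $x\le\sqrt{\log T}$, and the choice $k\asymp\log T$ in the upper range, both silently assume the majorant's cost is $O(1)$, and they collapse once the linear-in-$k$ density term is accounted for.

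Aside from this, your skeleton is the right one and is essentially the paper's: pointwise trivial bound $X(t)\ll\log T$ (Lemma \ref{bulk}) to restrict to $x\le B_1\log T$; explicit formula to convert the smoothed statistic into a short Dirichlet polynomial over primes; a Selberg/Soundararajan-type moment computation (your Montgomery--Vaughan diagonal analysis plays the role of Lemma \ref{moments}, valid only for moment order $\le k$ when the polynomial has length $T^{1/k}$); and Markov's inequality. What is missing is the correct bookkeeping of the three coupled constraints — Fourier support $1/k$, moment order $\le k$, and density cost $\asymp k$ — whose simultaneous optimisation forces $k\asymp x$ and caps the method at $e^{-Cx\log x}$.
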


Here and in what follows, zeros are counted with multiplicity (in the unlikely event that some zero is not simple).

To elaborate on the meaning of this result: the ordinates $\gamma$ have density $\log T/2\pi$ near a height $T$, and for $t\in [T,2T]$, the points $\{\tfrac{\log T}{2\pi}(\gamma-t)\}$ are spaced so as to have a density of roughly $1$, at least for $\gamma$ near $t$. Theorem \ref{quad_bound} therefore bounds the frequency with which these respaced zeros can occur in large clumps. The theorem is only of interest when $x$ is large.

Plainly Theorem \ref{quad_bound} also implies the same estimate when $Q$ is replaced by any function $\eta$ that decays quadratically (with constants depending on $\eta$). Letting $\eta = \mathbf{1}_{[0,1/2\pi]}$, and defining as usual $N(T):= \#\{\gamma: \gamma\in(0,T)\}$, we obtain a corollary that may be easier to understand at a glance.

\begin{cor}
\label{interval_bound}
Assume RH. For all $x\geq 2$ and all $T\geq 2$,
$$
\frac{1}{T}\,\mathrm{meas}\big\{ t\in [T,2T]:\, N(t+1/\log T)-N(t) \geq x\big\} \ll e^{-C x\log x}
$$
where the constant $C$ and the implicit constant are absolute.
\end{cor}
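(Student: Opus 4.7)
The plan is to deduce the corollary from Theorem \ref{quad_bound} by comparing the indicator $\eta(\xi) := \mathbf{1}_{[0,1/(2\pi)]}(\xi)$ to the quadratic-decay weight $Q(\xi) = 1/(1+\xi^2)$ pointwise. I would first verify that for every $\xi\in\mathbb{R}$ one has $\eta(\xi)\leq c\,Q(\xi)$ with $c := (4\pi^2+1)/(4\pi^2)$: on the support $[0,1/(2\pi)]$ the function $Q$ is bounded below by $1/c$, while outside this interval the indicator simply vanishes.

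Next I would rescale the $\gamma$'s by $\log T/(2\pi)$ to rewrite the count of zeros in the short box as
\[
N(t+1/\log T) - N(t) \;=\; \sum_\gamma \eta\!\left(\tfrac{\log T}{2\pi}(\gamma-t)\right) \;\leq\; c\sum_\gamma Q\!\left(\tfrac{\log T}{2\pi}(\gamma-t)\right).
\]
It follows that the event $\{N(t+1/\log T)-N(t)\geq x\}$ is contained in the event $\{\sum_\gamma Q(\tfrac{\log T}{2\pi}(\gamma-t))\geq x/c\}$. For $x \geq 2c$, Theorem \ref{quad_bound} applied at height $x/c$ in place of $x$ supplies a bound $\ll e^{-C(x/c)\log(x/c)}$, which reduces to the desired $\ll e^{-C'x\log x}$ after absorbing the $-\log c$ error into a new absolute constant $C' > 0$. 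The small leftover range $2\leq x\leq 2c$ is handled trivially, since the left-hand side is always at most $1$ while $e^{-C'x\log x}$ stays bounded below by a positive constant on this compact set (after shrinking $C'$ if needed).

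I do not anticipate any substantive obstacle here; the argument is essentially bookkeeping of constants, and the only real idea is the pointwise inequality $\eta\leq cQ$. The same proof applies verbatim to any nonnegative weight $\eta$ satisfying $\eta(\xi) \ll 1/(1+\xi^2)$, as already hinted in the remark immediately preceding the statement, so one could state a more general corollary along these lines if desired.
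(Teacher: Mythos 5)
Your argument is correct and is essentially the paper's own: the corollary is obtained exactly by dominating the indicator $\mathbf{1}_{[0,1/2\pi]}$ pointwise by a constant multiple of $Q$ and invoking Theorem \ref{quad_bound}, with the harmless constant absorbed into $C$. The only detail worth keeping explicit is, as you note, that $\log c$ is small compared to $\log x$ for $x\geq 2$, so the absorption into $e^{-C'x\log x}$ is legitimate.
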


\textbf{Remark:} This result refines a moment bound of Fujii \cite[Main Theorem]{Fu}, and is closely related, even in the method of its proof, to a bound of Soundararajan \cite[Theorem 2]{So2}, who proves estimates of a similar strength, but in which $x$ grows with $T$, and in which the size of the interval may grow at a faster rate than $1/\log T$.

We note that without assuming RH, it is possible to prove an upper bound $e^{-cx}$, where $c$ is an absolute constant.

We also develop in Theorems \ref{J_bound} and Lemma \ref{W_markov} estimates for more oscillatory counts of zeros. As a consequence we obtain an upper bound for the logarithmic derivative of the zeta function.

\begin{theorem} [Tail bounds for $\zeta'/\zeta$]
\label{log_bound} 
Assume RH, and fix $\alpha > 0$. For $x \geq 2$ and $T \geq 2$,
$$
\frac{1}{T}\,\mathrm{meas}\Big\{ t\in [T,2T]: \frac{1}{\log T} \Big|\frac{\zeta'}{\zeta}\Big(\frac{1}{2}+\frac{\alpha}{\log T} + it\Big)\Big| \geq x \Big\} \ll e^{-C x \log x},
$$
where the constant $C$ and the implied constant depend only on $\alpha$. 
\end{theorem}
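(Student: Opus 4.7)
The plan is to reduce the bound on $\zeta'/\zeta$ to linear statistics of zeros via the standard partial fraction expansion. Under RH, for $s = \tfrac12 + \alpha/\log T + it$ with $t \in [T,2T]$, a classical identity gives
\begin{equation*}
\frac{\zeta'}{\zeta}(s) = \sum_{|\gamma-t|\leq 1} \frac{1}{s-\rho} + O(\log T),
\end{equation*}
where the sum runs over non-trivial zeros $\rho = \tfrac12 + i\gamma$. Introducing the rescaled variable $\xi_\gamma := (\log T/2\pi)(\gamma-t)$, each term rewrites as $1/(s-\rho) = \log T/(\alpha - 2\pi i \xi_\gamma)$, so after dividing by $\log T$ we obtain
\begin{equation*}
\frac{1}{\log T}\,\frac{\zeta'}{\zeta}(s) = \sum_{|\gamma-t|\leq 1} \frac{\alpha + 2\pi i \xi_\gamma}{\alpha^2 + 4\pi^2 \xi_\gamma^2} + O(1).
\end{equation*}

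I would then split into real and imaginary parts and apply a union bound, controlling the event that either exceeds $x/2$. The real part is a sum of the Poisson-type kernel $\alpha/(\alpha^2 + 4\pi^2\xi^2)$, which decays quadratically at infinity and is pointwise dominated by $C_\alpha\, Q(\xi)$ for some constant $C_\alpha$ depending only on $\alpha$. Hence Theorem \ref{quad_bound} applies directly, yielding the desired tail bound $\ll e^{-Cx\log x}$ for the real part, with constants depending only on $\alpha$.

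The imaginary part is a sum of the odd, conjugate-Poisson-type kernel $2\pi\xi/(\alpha^2 + 4\pi^2\xi^2)$, which decays only like $1/|\xi|$. This is precisely the kind of ``signed'' linear statistic handled by Theorem \ref{J_bound} and Lemma \ref{W_markov}, and I would invoke those results to obtain an analogous sub-exponential tail bound for the imaginary part. The main obstacle I anticipate is that the conjugate-Poisson kernel is not compactly supported and has slow $1/|\xi|$ decay, so if Theorem \ref{J_bound} and Lemma \ref{W_markov} are stated for test functions of a more restrictive shape, I expect to need a preliminary truncation of the sum — capping the contribution of far zeros via the explicit formula at $O(\log T)$ cost — possibly combined with a dyadic decomposition of the intermediate range of $|\xi_\gamma|$, before the signed-count estimates can be applied in the form in which they are stated.
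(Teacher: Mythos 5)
Your reduction is in the same spirit as the paper's proof (the paper itself remarks that the classical formula $\zeta'/\zeta(s)=\sum_{|\gamma-t|\leq 1}(s-\rho)^{-1}+O(\log T)$ "expresses the same idea"), and the treatment of the real part is fine: it is a nonnegative sum dominated termwise by $C_\alpha Q$, so Theorem \ref{quad_bound} applies. The gap is in the imaginary part, and it is essentially the opposite of the obstacle you anticipate. Theorem \ref{J_bound} and Corollary \ref{JZ_bound} are stated precisely for the slowly decaying kernel $J$ — no truncation of $J$ is needed to apply them — but they bound the \emph{untruncated, regularized} statistics $\langle J,\widetilde{\Z}\rangle$ and $\langle J,\Z\rangle$, defined as symmetric limits over all zeros, whereas your sum runs only over $|\gamma-t|\leq 1$. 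To invoke them you must control the discrepancy $\lim_{V\to\infty}\sum_{1<|\gamma-t|<V} J\big(\tfrac{\log T}{2\pi}(\gamma-t)\big)$, a signed sum over far zeros whose convergence rests entirely on cancellation; your proposal never bounds this quantity. Moreover, Lemma \ref{W_markov} cannot serve as the bridge: it only gives $\bb{P}(\cdot\geq\epsilon)\ll\epsilon^2$ for $T$ sufficiently large, which is far too weak to be absorbed into an $e^{-Cx\log x}$ tail bound uniform in $T\geq 2$, and the natural truncation point here is $k\asymp\log T$, outside the regime in which that lemma is useful.

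The gap is fillable in two ways. One can show pointwise, by pairing the two sides of the sum, using $N(u+1)-N(u)\ll\log u$ and partial summation against the density $\tfrac{1}{2\pi}\log\tfrac{u}{2\pi}$, that the far-zero signed sum is $O(\log T)$, hence $O(1)$ after the $1/\log T$ rescaling (up to an additional $O_\alpha(\langle Q,\Z\rangle)$ from replacing $J$ by your kernel $2\pi\xi/(\alpha^2+4\pi^2\xi^2)$); then your argument closes via Corollary \ref{JZ_bound} and Theorem \ref{quad_bound}. Alternatively — and this is what the paper does — avoid truncation altogether: using the explicit formula (as in the derivation of \eqref{J_zeta}) one gets the exact identity $\tfrac{1}{\log T}\tfrac{\zeta'}{\zeta}\big(\tfrac12+\tfrac{\alpha}{\log T}+it\big)=\langle I_\alpha,\widetilde{\Z}_T(t)\rangle+O_\alpha(1/\log T)$ with $I_\alpha(\xi)=1/(\alpha-i2\pi\xi)$, and since $I_\alpha(\xi)=J(\xi)+O_\alpha(Q(\xi))$ the theorem follows directly from Theorems \ref{quad_bound} and \ref{J_bound} (the regular term $\langle Q,\Z^o\rangle$ being $O(1)$). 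As written, your proof is incomplete at exactly the point where the truncated and regularized sums must be compared, and the tools you name for that step are not the right ones.
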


\textbf{Remark:} This strengthens moment bounds for the logarithmic derivative of the zeta function, which have been proved under RH and some additional hypotheses by Farmer, Gonek, Lee, and Lester \cite[Corollary 2.1]{FaGoLeLe}, and subsequently under RH alone in the author's thesis (see \cite[Theorem 2.1]{Ro}).

We apply the tail bound, Theorem \ref{quad_bound}, and these other bounds to consider averages of ratios of the zeta function. We develop an upper bound for these averages.

\begin{theorem}[Moment bound for ratios]
\label{ratio_bound}
Assume RH. For any $\alpha, \beta \in \mathbb{C}$ with $\Re\, \beta \neq 0$, and for any $m \geq 0$, uniformly for $T\geq 2,$
$$
\frac{1}{T} \int_T^{2T} \Bigg| \frac{\zeta\Big(\frac{1}{2} + \frac{\alpha}{\log T} + it\Big)}{\zeta\Big(\frac{1}{2}+ \frac{\beta}{\log T} + it\Big)}\Bigg|^m\,dt \ll_{\alpha, \beta, m} 1.
$$
\end{theorem}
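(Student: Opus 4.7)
The plan is to leverage Theorem~\ref{log_bound} via a contour representation of $\log(\zeta(s_1)/\zeta(s_2))$. By the functional equation $\zeta(s) = \chi(s)\zeta(1-s)$ and the asymptotic $|\chi(\tfrac{1}{2}+c/\log T+it)| \asymp e^{-c}$ uniformly for $t \in [T,2T]$, one may replace $\alpha$ by $-\overline{\alpha}$ (and $\beta$ similarly) at the cost of a bounded multiplicative constant, so we may assume $\Re\alpha \geq 0$ and $\Re\beta > 0$.

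Suppose first that $\Re\alpha > 0$. Under RH the straight segment $s_\lambda := s_2 + \lambda(s_1-s_2)$, $\lambda \in [0,1]$, has $\Re s_\lambda \geq \tfrac{1}{2} + \min(\Re\alpha, \Re\beta)/\log T$ and hence lies in the zero-free half-plane $\{\Re z > 1/2\}$. Thus
\[
\bigg|\log\frac{\zeta(s_1)}{\zeta(s_2)}\bigg| = \bigg|\int_0^1\frac{\zeta'}{\zeta}(s_\lambda)(s_1-s_2)\,d\lambda\bigg| \leq |\alpha-\beta|\int_0^1 X_\lambda\,d\lambda,
\]
where $X_\lambda := |\zeta'/\zeta(s_\lambda)|/\log T$. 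Raising to the $m$-th power and applying Jensen's inequality with the probability measure $d\lambda$ on $[0,1]$ and the convex function $u \mapsto e^{m|\alpha-\beta|u}$ gives $|\zeta(s_1)/\zeta(s_2)|^m \leq \int_0^1 e^{m|\alpha-\beta|X_\lambda}\,d\lambda$. Taking the $t$-average and applying Fubini reduces the problem to showing
\[
\sup_{\lambda \in [0,1]} \frac{1}{T}\int_T^{2T} e^{m|\alpha-\beta|X_\lambda}\,dt \ll_{\alpha,\beta,m} 1.
\]
At each $\lambda$ the point $s_\lambda$ corresponds to shift $\alpha_\lambda := \lambda\alpha + (1-\lambda)\beta$ with $\Re\alpha_\lambda > 0$, so Theorem~\ref{log_bound} yields $\Prob(X_\lambda \geq x) \ll e^{-Cx\log x}$; the super-exponential tail ensures that the exponential moment is finite, uniformly as $\alpha_\lambda$ ranges over the compact line segment from $\alpha$ to $\beta$ in $\{\Re > 0\}$.

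For the boundary case $\Re\alpha = 0$ the endpoint $s_1$ lies on the critical line, and $|\zeta'/\zeta(s_\lambda)|$ is no longer tame near $\lambda = 1$. The workaround is to isolate the singular contributions of zeros $\rho_i = \tfrac{1}{2} + i\gamma_i$ lying close to $s_1$: write $\zeta'/\zeta(z) = \sum_i(z-\rho_i)^{-1} + R(z)$, the sum being over the finitely many zeros within a fixed distance of $s_1$. The residual $R$ is then controlled by the previous argument, while the singular sum integrates to $-\sum\log\bigl((s_1-\rho_i)/(s_2-\rho_i)\bigr)$, whose modulus involves $\sum\log(1/|s_1-\rho_i|)$ and the cluster size, both controllable via Corollary~\ref{interval_bound} and Theorem~\ref{quad_bound} applied to windows around $t+\Im\alpha/\log T$.

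The principal technical obstacles are verifying uniformity of the constants in Theorem~\ref{log_bound} as $\alpha_\lambda$ varies over a compact subset of $\{\Re > 0\}$, and handling the $\Re\alpha = 0$ boundary case carefully. Once these are in hand, the proof reduces to Jensen's inequality and the tail bounds established earlier in the paper.
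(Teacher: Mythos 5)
Your main case is sound, and it is organized differently from the paper: the paper never integrates $\zeta'/\zeta$ along the segment, but instead uses the Hadamard product to write the ratio as $(1+o(1))\exp\big(\langle L_{\alpha,\beta},\Z_T\rangle\big)$, expands $L_{\alpha,\beta}(\xi)=i(\alpha-\beta)J(\xi)+O_{\alpha,\beta}(Q(\xi))$, and then applies the tail bounds for the unsigned count $\langle Q,\Z\rangle$ (Theorem \ref{quad_bound}) and the signed statistic $\langle J,\Z\rangle$ (Corollary \ref{JZ_bound}). Your route through Theorem \ref{log_bound}, Jensen and Fubini is legitimate when $\Re\alpha>0$ and $\Re\beta>0$ (after your functional-equation reduction), granted the routine extension of Theorem \ref{log_bound} to complex shifts whose real parts lie in a compact subset of $(0,\infty)$; on the segment one has $\Re\alpha_\lambda\geq\min(\Re\alpha,\Re\beta)>0$ uniformly, so the super-exponential tails do give bounded exponential moments, uniformly in $\lambda$.

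The genuine gap is the boundary case $\Re\alpha=0$, which is part of the theorem. Two problems. First, the claim that the residual $R$ is ``controlled by the previous argument'' is not justified: Theorem \ref{log_bound} concerns $\zeta'/\zeta$ at shifts whose real part is bounded below, its constants degenerate as that real part tends to $0$, and near $\lambda=1$ your points $s_\lambda$ approach the critical line where $R$ is not $\zeta'/\zeta$; nothing you have established gives tail bounds for $|R(s_\lambda)|/\log T$ there. Second, the tools you cite for the singular part, Corollary \ref{interval_bound} and Theorem \ref{quad_bound}, control only \emph{unsigned} counts, and unsigned information cannot close this case: zeros at rescaled distance $x_\gamma=\tfrac{\log T}{2\pi}(\gamma-t)$ with $1\ll|x_\gamma|\ll\log T$ contribute to $\log|\zeta(s_1)/\zeta(s_2)|$ terms of size roughly $(\Im\beta-\Im\alpha)/(2\pi x_\gamma)$, and the sum of their absolute values is typically of order $\log\log T$, not $O(1)$; only the cancellation between zeros above and below $t$ rescues the bound, and that cancellation is precisely the content of the signed-count bounds Theorem \ref{J_bound} and Corollary \ref{JZ_bound}, which you never invoke. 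As written, your $\Re\alpha=0$ case fails whenever $\Im\alpha\neq\Im\beta$. Two possible repairs: bring in Corollary \ref{JZ_bound} for the oscillatory far-zero sum (at which point you have essentially reconstructed the paper's proof), or reduce to your main case pointwise by inserting an intermediate shift, using $|\zeta(s_1)/\zeta(s_1+\epsilon/\log T)|\ll_\epsilon 1$ for fixed $\epsilon>0$ (Lemma 1 of \cite{Ra}, as noted in the paper's introduction, since here the imaginary parts agree and the real parts are ordered) and then handling $\zeta(s_1+\epsilon/\log T)/\zeta(s_2)$, whose shifts both have positive real part.
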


\subsection{}
The second part of the paper requires some knowledge from random matrix theory. Before all else, we will develop bounds for counts of eigenvalues of random unitary matrices analogous to those above for zeta zeros.

Moreover, we show rigorously that the asymptotic evaluation of averages of the sort considered Theorem \ref{ratio_bound} follow from knowing the local distribution of zeros of the zeta function. Recall the following well-known conjecture about the local distribution of zeros.

\begin{conj}[GUE Conjecture]
\label{GUEConj}
Assume RH. For all fixed $k$ and continuous and quadratically decaying\footnote{By quadratically decaying, we mean $\eta(x) = O\Big(\frac{1}{1+x_1^2}\cdots \frac{1}{1+x_k^2}\Big).$ A purist may object that it is more natural to make this conjecture for only compactly supported $\eta$, but these two versions of this conjecture may be seen without too much effort to be equivalent.} test functions $\eta: \bb R^k \rightarrow \bb R$,
\small
\begin{equation*}
\frac{1}{T}\int_T^{2T} \sum_{\substack{\gamma_1,...,\gamma_k\\\mathrm{distinct}}} \eta\Big(\tfrac{\log T}{2\pi}(\gamma_1-t),...,\tfrac{\log T}{2\pi}(\gamma_k-t)\Big)\,dt \sim \int_{\bb R^k} \eta(x) \, \det_{k\times k}\big(K(x_i-x_j)\big)\,d^k x,
\end{equation*}
\normalsize
as $T\rightarrow\infty$, where the $ij^\mathrm{th}$ entry of the $k\times k$ determinant is given by $K(x_i-x_j) = \sin \pi (x_i-x_j) / \pi(x_i-x_j)$.
\end{conj}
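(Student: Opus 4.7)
The plan is to follow the Fourier-side approach pioneered by Montgomery for the pair correlation ($k=2$) and extended by Rudnick--Sarnak to higher correlations, which reduces the problem, via the explicit formula, to a mean value theorem for Dirichlet polynomials.

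First, I would reduce by approximation to the case that $\hat\eta$ is smooth and compactly supported, using the quadratic decay of $\eta$ to control the approximation error. Substituting $\eta(u)=\int \hat\eta(\xi)\,e^{2\pi i u\cdot \xi}\,d^k\xi$ into the left-hand side and using inclusion--exclusion against lower-order correlations to enforce the distinctness restriction, the $k$-level statistic takes the form
$$
\int_{\bb R^k} \hat\eta(\xi)\,\frac{1}{T}\int_T^{2T} \prod_{j=1}^k \bigg(\sum_\gamma e^{2\pi i\xi_j(\gamma-t)\log T/(2\pi)}\bigg)\,dt\, d^k\xi + (\text{lower-order terms}).
$$

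Second, I would apply the Riemann--von Mangoldt explicit formula to each inner exponential sum, which replaces $\sum_\gamma e^{2\pi i\xi_j(\gamma-t)\log T/(2\pi)}$, up to negligible contributions, by a Dirichlet polynomial of length $\approx T^{|\xi_j|}$ with coefficients built from $\Lambda(n)n^{-1/2}$. Expanding the $k$-fold product produces a linear combination of Dirichlet polynomials to which the Montgomery--Vaughan mean value theorem applies. Provided the combined length stays below $T$, the $t$-average kills all off-diagonal cross-terms and leaves a pure diagonal, which by PNT and Fourier inversion assembles into exactly $\int \eta(x)\det(K(x_i-x_j))\,d^k x$; the sine kernel emerges through the Fourier identity $\hat K(\xi)=\mathbf{1}_{|\xi|<1}$ and some bookkeeping of the combinatorial contributions from distinct $\xi$-sign patterns.

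The main obstacle, and the reason this statement is posed as a conjecture rather than a theorem, is the range of validity in the previous step. Once the combined Dirichlet polynomials exceed length $T$, the off-diagonal terms produced by Montgomery--Vaughan are indexed by shifts $n-m=h$ and their $t$-averages are essentially $\sum_{n\leq T} \Lambda(n)\Lambda(n+h)$; evaluating these with a power-saving error uniformly across a nontrivial range of shifts is essentially the Hardy--Littlewood prime $k$-tuples conjecture, which lies well beyond current technology. The plan therefore yields the GUE Conjecture only for $\hat\eta$ supported in the region $\sum_j|\xi_j|<1$ (or the analogous Rudnick--Sarnak region for $k\geq 3$), and removing this restriction to obtain the statement in the full quadratically-decaying class is the substantive arithmetic input that remains missing.
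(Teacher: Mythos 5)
The statement you were asked about is Conjecture \ref{GUEConj}, the GUE Conjecture itself: the paper does not prove it and offers no proof sketch, because it is an open problem; it appears only as a hypothesis (most importantly in Theorem \ref{corr_implies_ratios}, where it is assumed in order to deduce the Local Ratios Conjecture). So there was no ``paper proof'' for your attempt to be compared against, and your proposal — by your own admission in its final paragraph — is not a proof of the statement either. What you have written is an accurate summary of the known partial progress: the explicit-formula reduction to Dirichlet polynomials, the Montgomery--Vaughan mean value theorem handling the diagonal, and the emergence of the sine kernel from $\hat K(\xi)=\mathbf{1}_{|\xi|<1}$, all of which is the Montgomery (pair correlation) and Rudnick--Sarnak (higher correlations) argument. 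You also correctly locate the obstruction: once the Fourier support of the test function leaves the restricted region (for zeta the Rudnick--Sarnak constraint is $\sum_j|\xi_j|<2$ rather than $<1$, a minor point), the off-diagonal terms amount to correlations of $\Lambda$ of Hardy--Littlewood type, which are far beyond current technology.

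The genuine gap, then, is the entire content of the conjecture: your first reduction step (approximating $\eta$ so that $\hat\eta$ is smooth and compactly supported) does not and cannot shrink the support into the admissible region, so the argument establishes only the band-limited cases already in the literature, not the statement for the full class of continuous, quadratically decaying test functions. This is not a flaw you could have repaired — the statement is posed as a conjecture precisely because no one can do this — but you should be aware that presenting it as a ``proof plan'' for Conjecture \ref{GUEConj} overstates what the plan delivers; within this paper the correct posture is simply to assume the conjecture, as the author does, and to note (as the footnote to the conjecture does) that the compactly supported and quadratically decaying formulations are equivalent, a reduction which is elementary and quite different from the unresolved extension of Fourier support.
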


We also recall a conjecture for the \emph{first order} asymptotics of ratios of the zeta function.

\begin{conj}[Local Ratios Conjecture with real translations]
\label{RatiosConj}
Assume RH. For all fixed $k\geq 1$ and all fixed collections of numbers $\alpha_1,...,\alpha_m,\beta_1,...,\beta_m\in \mathbb{R}$, with $ \beta_\ell\neq 0$ for all $\ell$, and\footnote{We clearly lose no generality from this restriction.} $\alpha_i\neq \beta_j$ for all $i,j$, we have
\begin{equation}
\label{EqRatios}
\frac{1}{T} \int_T^{2T} \prod_{\ell=1}^m\frac{\zeta\Big(\frac{1}{2} + \frac{\alpha_\ell}{\log T} + it\Big)}{\zeta\Big(\frac{1}{2}+ \frac{\beta_\ell}{\log T} + it\Big)}\,dt \sim \frac{\det\Big(\frac{E(\alpha_i,\beta_j)}{\alpha_i-\beta_j}\Big)}{\det\Big(\frac{1}{\alpha_i-\beta_j}\Big)},
\end{equation}
where
$$
E(\alpha,\beta):= \begin{cases} e^{-\alpha+\beta} & \Re\, \beta < 0 \\ 1 & \Re\, \beta > 0.\end{cases}
$$
\end{conj}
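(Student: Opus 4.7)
The plan is to derive Conjecture \ref{RatiosConj} as a consequence of the GUE Conjecture \ref{GUEConj}, using the tail bounds of Section 1 (Theorems \ref{quad_bound}, \ref{log_bound}, and \ref{ratio_bound}) as the uniform-integrability inputs that make the implication rigorous.

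First I would convert the log ratio into a linear statistic over the rescaled zeros. Setting $s_\alpha := \tfrac12 + \tfrac{\alpha}{\log T} + it$, the Hadamard product (with the $B + s/\rho$ regularization and the pole-of-$\zeta$ pieces contributing $O(1/\log T)$) gives
\begin{equation*}
\log\zeta(s_\alpha) - \log\zeta(s_\beta) \;=\; \sum_\gamma \log \frac{s_\alpha - \tfrac12 - i\gamma}{s_\beta - \tfrac12 - i\gamma} + O(1/\log T),
\end{equation*}
and after the rescaling $\tilde\gamma := \tfrac{\log T}{2\pi}(\gamma - t)$ the integrand in (\ref{EqRatios}) becomes $\exp\!\big(\sum_\gamma F(\tilde\gamma)\big)\cdot\big(1 + O(1/\log T)\big)$, where
\begin{equation*}
F(x) \;=\; \sum_{\ell=1}^m \log\frac{\alpha_\ell - 2\pi i x}{\beta_\ell - 2\pi i x}
\end{equation*}
is smooth on $\mathbb{R}$ (with at most integrable singularities on a measure-zero set).

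Second, I would expand the exponential as a Taylor series, decompose each power $(\sum_\gamma F(\tilde\gamma))^k/k!$ into sums over distinct $k$-tuples plus diagonal contributions via inclusion–exclusion, and apply Conjecture \ref{GUEConj} to each distinct-tuple statistic separately. Summed over $k$, the large-$T$ limit matches — term by term — the analogous CUE expectation of a product of ratios of characteristic polynomials, which is a standard calculation (for example along the lines of Conrey–Farmer–Keating–Rubinstein–Snaith) producing the determinantal formula on the RHS of (\ref{EqRatios}), with the prefactor $E(\alpha,\beta)$ reflecting the sign of $\Re\beta$.

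The principal obstacle is justifying the interchange of limit, integration, and Taylor expansion: $F$ decays only like $1/|x|$ at infinity, so Conjecture \ref{GUEConj}, stated for quadratically decaying test functions, cannot be applied without first truncating $F$; and the $k$-sum must be controlled with a remainder uniform in $T$. This is exactly where the first part of the paper is essential. Theorem \ref{quad_bound} (together with Corollary \ref{interval_bound}) lets one truncate $F$ at a scale $R$ with uniformly controlled error even after exponentiation; Theorem \ref{log_bound} provides analogous control over the signed sums responsible for the imaginary part of the linear statistic; and Theorem \ref{ratio_bound} furnishes the $L^{1+\varepsilon}$-type uniform-integrability bound that upgrades termwise convergence to convergence of the full integral on the LHS of (\ref{EqRatios}). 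Balancing the truncation parameters so that all approximation errors vanish simultaneously as $T\to\infty$ is the technical heart of the argument.
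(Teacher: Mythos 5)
Your overall route is the same as the paper's: this conjecture is not proved unconditionally, but is derived from the GUE Conjecture (Theorem \ref{corr_implies_ratios}) by writing each ratio as $(1+o(1))\exp(\langle L_{\alpha,\beta},\Z_T\rangle)$ via the Hadamard product, using the tail bounds of Section \ref{2} to control exponential moments, matching moments of linear statistics against the unitary group via the GUE Conjecture (the paper's Proposition \ref{GUE_to_dist}, which is your Taylor-expansion-plus-inclusion-exclusion step with the uniform-in-$T$ remainder supplied by Theorem \ref{quad_bound} and \eqref{exp_ineq}), and finally quoting the exact CUE ratio evaluation (Corollary \ref{ratios_unitary_limit}).

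There is, however, a genuine gap at the point you yourself identify as the technical heart: the truncation of the slowly decaying part of $F$. Near infinity $L_{\alpha,\beta}(\xi)=i(\alpha-\beta)J(\xi)+O_{\alpha,\beta}(Q(\xi))$ with $J(\xi)\asymp 1/\xi$, and the tools you cite do not show that the truncation error vanishes as the truncation scale grows. Theorem \ref{quad_bound} controls only the quadratically decaying component, and Theorem \ref{log_bound} (like Theorem \ref{J_bound}) says the full signed statistic is \emph{rarely large}; neither says that the discarded tail $\sum_{|\tilde\gamma|>k}1/\tilde\gamma$ is \emph{typically small}, which is what is needed to identify the limit (for real translations this tail is purely imaginary, so it never threatens boundedness, only the phase). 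The paper supplies exactly this missing ingredient: the smooth tail surrogate $(W^{(1/k)})\,\hat{}\,$ of \eqref{W_def}--\eqref{W_hat}, the second-moment bound with the crucial $k^{-2\ell}$ gain coming from cancellation (Lemma \ref{J_moments}), and the resulting smallness-in-probability statement Lemma \ref{W_markov}, together with its unitary analogue Lemma \ref{W_markov_unitary}; the realness of the $\alpha_\ell,\beta_\ell$ is then used precisely so that $\Re\,iA(W^{(1/k)})\,\hat{}\,=0$, which keeps the exponential moments of the truncated statistic $L^{(1/k)}_{\alpha,\beta}$ bounded uniformly in the truncation parameter $k$ (via Theorem \ref{ratio_bound} and Corollary \ref{ratios_unitary_bound}), allowing the Cauchy--Schwarz splitting over the events $H_{\geq\epsilon}$, $H_{<\epsilon}$. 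Without an estimate of the Lemma \ref{W_markov} type on both the zeta and the random-matrix sides, the balancing of truncation parameters you invoke cannot be carried out, so your sketch as written does not close.
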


As an application of the techniques above, we show that the first of these claims implies the second.

\begin{theorem} 
\label{corr_implies_ratios}
The GUE Conjecture implies the Local Ratios Conjecture with real translations.
\end{theorem}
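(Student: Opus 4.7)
The plan is to express each sample of the ratio as a product over local zeros of $\zeta$ via Hadamard factorization, to expand that product into a series of $k$-point zero statistics, and to feed each term into the GUE Conjecture, matching the result with the known random-matrix ratio formula for the sine kernel.

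\textbf{Step 1 (Product expression).} On RH, Hadamard factorization yields
$$
\prod_{\ell=1}^m \frac{\zeta\bigl(\tfrac{1}{2} + \tfrac{\alpha_\ell}{\log T} + it\bigr)}{\zeta\bigl(\tfrac{1}{2} + \tfrac{\beta_\ell}{\log T} + it\bigr)} \;=\; \mathcal{A}_T(t)\,\prod_\gamma \tilde f(u_\gamma),\qquad u_\gamma := \tfrac{\log T}{2\pi}(\gamma - t),
$$
where $\tilde f(u)$ is the product over $\ell$ of $(\alpha_\ell - 2\pi i u)/(\beta_\ell - 2\pi i u)$ together with the Hadamard exponential regulator; the resulting cancellation gives $\tilde f(u) = 1 + O(1/u^2)$ as $|u| \to \infty$, so that $\prod_\gamma \tilde f(u_\gamma)$ is absolutely convergent. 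The factor $\mathcal{A}_T(t)$ is a smooth archimedean term, uniformly bounded on $[T,2T]$ and tending to $1$ as $T\to\infty$.

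\textbf{Step 2 (Truncation and GUE input).} Fix $A$ large and a smooth cutoff $\chi_A$ with $\chi_A \equiv 1$ on $[-A,A]$, supported in $[-2A,2A]$. With $g := \tilde f - 1$, factor
$$
\prod_\gamma \tilde f(u_\gamma) \;=\; \prod_\gamma\bigl(1 + \chi_A(u_\gamma)\,g(u_\gamma)\bigr)\cdot \exp\!\sum_\gamma \bigl(1 - \chi_A(u_\gamma)\bigr)\log \tilde f(u_\gamma).
$$
Because $|\log \tilde f(u)| = O(1/u^2)$, the tail exponential has mean $\exp(O(1/A))$ with $L^p$-fluctuations controlled by Theorems \ref{quad_bound} and \ref{log_bound}. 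The truncated product expands as
$$
\prod_\gamma\bigl(1 + \chi_A(u_\gamma)\,g(u_\gamma)\bigr) = \sum_{k\geq 0}\frac{1}{k!}\sum_{\substack{\gamma_1,\ldots,\gamma_k\\\text{distinct}}}\prod_{j=1}^k \chi_A(u_{\gamma_j})\,g(u_{\gamma_j}),
$$
each summand being a $k$-point linear statistic against a continuous, compactly supported (hence quadratically decaying) test function. Theorem \ref{quad_bound} provides $L^p$-bounds uniform in $k$ and $T$ sufficient to interchange the $t$-integration with the $k$-sum, after which the GUE Conjecture applies term by term.

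\textbf{Step 3 (Identification).} Passing $T\to\infty$ and then $A\to\infty$, the limit equals the expected analogous ratio for the sine-kernel determinantal point process, namely
$$
\sum_{k\geq 0}\frac{1}{k!}\int_{\mathbb{R}^k} \prod_j g(u_j)\,\det_{k\times k}\bigl(K(u_i - u_j)\bigr)\,d^k u.
$$
The known CUE scaling-limit ratio formulas (Conrey--Farmer--Zirnbauer; Bump--Gamburd; Borodin--Olshanski--Strahov) identify this quantity with $\det\!\bigl(E(\alpha_i,\beta_j)/(\alpha_i-\beta_j)\bigr)/\det\!\bigl(1/(\alpha_i-\beta_j)\bigr)$, which together with $\mathcal{A}_T(t) \to 1$ produces exactly the right-hand side of \eqref{EqRatios}.

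\textbf{Main obstacle.} The heart of the argument is Step 2: justifying the interchange of the $T \to \infty$ limit with both the $k$-series and the $A \to \infty$ truncation limit, while uniformly controlling the tail product despite the only conditional convergence of the raw sum $\sum_\gamma \log f(u_\gamma)$ before regularization. This uniform integrability is precisely what the sub-exponential tail bounds of Theorems \ref{quad_bound} and \ref{log_bound} were designed to provide, and is the reason the first half of the paper is needed as input here.
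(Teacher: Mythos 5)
Your overall strategy (write the ratio as a product over locally rescaled zeros, truncate, use the tail bounds for uniform integrability, feed the truncated piece to the GUE Conjecture, and match with the CUE sine-kernel ratio formula) is the same as the paper's, and your treatment of the truncated, compactly supported part of Step 2 is essentially the paper's Proposition \ref{GUE_to_dist}. But Step 1 contains a genuine error, and it erases exactly the main difficulty of the theorem. The logarithm of each local factor satisfies $\mathrm{Log}\big(\tfrac{\alpha/2\pi - iu}{\beta/2\pi - iu}\big) = i(\alpha-\beta)J(u) + O_{\alpha,\beta}(Q(u))$ with $J(u) = 2\pi u/(1+(2\pi u)^2)$, which decays only like $1/u$; the Hadamard regulators $e^{(s_1-s_2)/\rho}$ subtract multiples of $1/\rho = 1/(\tfrac12 + i\gamma)$, not of $1/(\gamma - t)$, so they cannot convert this $1/u_\gamma$ term into $O(1/u_\gamma^2)$ (indeed the regulated factor is not a function of $u_\gamma$ alone). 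Consequently $\tilde f(u) = 1 + O(1/u^2)$ is false, the product over zeros is not absolutely convergent, and the sum of the $1/u_\gamma$ terms converges only as a symmetric limit — this is precisely the "slowly decaying term" the paper isolates as $\langle J, \Z\rangle$ and controls only in probability.

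Because of this, Step 2's tail factor $\exp\sum_\gamma (1-\chi_A(u_\gamma))\log\tilde f(u_\gamma)$ is not $\exp(O(1/A))$ with easy $L^p$ control: up to a quadratically decaying remainder it contains $i\big(\sum_\ell(\alpha_\ell-\beta_\ell)\big)$ times a truncated version of $\langle J,\Z\rangle$, which is neither pointwise small nor absolutely summable, only \emph{typically} small. Handling it requires the machinery you never invoke: Theorem \ref{J_bound}, Corollary \ref{JZ_bound}, and above all Lemma \ref{W_markov} (the $W^{(1/k)}$ decomposition), together with the observation that for \emph{real} $\alpha_\ell,\beta_\ell$ this contribution to the exponent is purely imaginary, so it does not affect absolute values and the exponential moments are bounded uniformly in the cutoff; for complex translations one needs the stronger uniform bound \eqref{W_uniform_bound}, which the paper only sketches. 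Your proposal never uses the reality of the translations — a clear sign the central obstacle has been assumed away rather than overcome — and Theorem \ref{log_bound} is not the relevant tool for these tail fluctuations. Finally, Step 3's identification of the Fredholm-type series with the determinant ratio should be run through the unitary-group analogues (Theorem \ref{quad_bound_unitary}, Lemma \ref{W_markov_unitary}, Corollary \ref{ratios_unitary_limit}) rather than cited as known, since the same conditional-convergence issue arises on the CUE side.
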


There is a seemingly more general conjecture than Conjecture \ref{RatiosConj} in which $\alpha_1,...,\alpha_m,$ $\beta_1,...,\beta_m$ are allowed to lie in $\mathbb{C}$, with $\Re\, \beta_\ell \neq 0 $ for all $\ell$. Such a conjecture may be called just the \emph{Local Ratios Conjecture}. 

This increase in generality is really only apparent. It is possible using similar methods to see that the GUE Conjecture also implies the Local Ratios Conjecture, for general $\alpha$ and $\beta$. The proof of this claim requires a somewhat more lengthy technical argument, so we will not prove it here. We will instead say only a few words about what modifications in the proof of Theorem \ref{corr_implies_ratios} are necessary for it at the end of this paper.

\subsection{}
The study of the average of ratios of the zeta function has a long history. Conjecture \ref{RatiosConj} was first put forward in the case $m=2$ by Farmer \cite{Fa}, who understood it was closely connected with the local distribution of zeros of the Riemann zeta function. Farmer showed that the $m=2$ case of (a uniform version of) what we have called the Local Ratios Conjecture implies the $k=2$ case (pair correlation) of the GUE Conjecture \cite{Fa2}, and later produced similar implications for the $m=3, k=3$ case, while even higher correlations may be obtained from the work in \cite{CoSn2}. To our knowledge the present paper is the first rigorous work in the opposite direction.

More recently, a flurry of work has centered around the average of such ratios when the translations are not within a distance of $O(1/\log T)$ of the critical axis, but instead are up to a distance of $O(1)$ away. In this case great deal of effort has been put into not only producing asymptotic formulas, but extracting all relevant lower order terms \cite{CoFaZi}, which have many interesting implications \cite{CoSn}. (We have called Conjecture \ref{RatiosConj} a `Local Ratios Conjecture' to distinguish it from this expanded set of conjectures.) Indeed, it is worth noting at this point that the formula in \eqref{EqRatios} is not the usual way to write the ratio conjecture; instead one usually insists that $\Re\, \beta_l, \beta_\ell' >0$ and conjectures that
\begin{equation}
\label{EqRatios2}
\lim_{T\rightarrow\infty} \frac{1}{T} \int_T^{2T} \prod_{l=1}^m \frac{\zeta\Big(\frac{1}{2} + \frac{\alpha_l}{\log T} + it\Big)}{\zeta\Big(\frac{1}{2}+ \frac{\beta_l}{\log T} + it\Big)} \prod_{\ell=1}^{m'}\frac{\zeta\Big(\frac{1}{2} + \frac{\alpha_\ell'}{\log T} + it\Big)}{\zeta\Big(\frac{1}{2}+ \frac{\beta_\ell'}{\log T} + it\Big)}\,dt
\end{equation}
is predicted accurately to first order by a random matrix analogue. The expression for this limit is somewhat more complicated to write down than the formula on the right hand side of \eqref{EqRatios} (see for instance \cite{CoFoSn,CoFaZi2, BuGa}). Nonetheless, in spite of the simplicity of \eqref{EqRatios}, it is not clear whether there is any way to write down the more precise lower-order Ratio Conjectures in a way reminiscent of it. It would still be interesting to see if such a combinatorial formalism can be found.

In any case, an asymptotic formula for the left hand side of \eqref{EqRatios} implies an asymptotic formula for the left hand side of \eqref{EqRatios2}, and vice-versa. This may be seen most easily by applying the zeta function's functional equation. We will have nothing to say about lower order terms however.

Similarly to Farmer's papers above, some previous work has studied the connections of the GUE Conjecture to averages of the logarithmic derivative of the zeta function \cite{GoGoMo, FaGoLeLe, Ro}.

We note also the concurrent work \cite{ChNaNi}, which considers some similar questions to those we consider here, but replaces the zeta function with a probabilistic construction called the limiting characteristic polynomial.

\subsection{}
We turn to a quick conceptual sketch of some of our methods. Both the moment bound, Theorem \ref{ratio_bound}, and the conditional implication, Theorem \ref{corr_implies_ratios}, are critically dependent on the tail bound, Theorem \ref{quad_bound}. The strategy in each case is to write
\begin{equation}
\label{linstat_exp}
\frac{\zeta\Big(\frac{1}{2} + \frac{\alpha}{\log T} + it\Big)}{\zeta\Big(\frac{1}{2}+ \frac{\beta}{\log T} + it\Big)} = \exp\Big[\underbrace{\mathrm{Log}\, \zeta\Big(\frac{1}{2} + \frac{\alpha}{\log T} + it\Big) - \mathrm{Log}\,\zeta\Big(\frac{1}{2}+ \frac{\beta}{\log T} + it\Big)}_{:=\mathcal{L}_t}\Big],
\end{equation}
(ignoring for the moment all issues with branch cuts, which end up being minor). We show from the Hadamard product representation for the zeta function that $\mathcal{L}_t$ is `very close' to a linear statistic $\sum \eta\Big(\frac{\log T}{2\pi}(\gamma-t)\Big)$, for some function $\eta$ of quadratic decay. This is not literally true: $\mathcal{L}_t$, if written as a sum of zeros, must contain an extra term in the summand that decays very slowly. This term \emph{does not} decay quadratically -- in fact its sum converges only because of the symmetry of zeros -- but it may be shown that on average this extra term does not much affect the size of $\mathcal{L}_t$. (This step is not trivial, but will be the content of Theorem \ref{J_bound} and Lemma \ref{W_markov}.)

Thus it is that we see that we can approximate the ratio \eqref{linstat_exp} by the exponential of a linear statistic of zeros. It is just these linear statistics whose size we have controlled in our tail bound, Theorem \ref{quad_bound}, and it is in this way that the moment bound Theorem \ref{ratio_bound} is proved. For the implication in Theorem \ref{corr_implies_ratios}, on the other hand, we note that we are able to asymptotically control the moments of such linear statistics by using the GUE Conjecture and a standard combinatorial procedure. This asymptotic control on the moments of linear statistics is not ipso facto enough to pass to the Local Ratios Conjecture however. It is not the case, that is, that Theorem \ref{corr_implies_ratios} is just a matter of combinatorial manipulation in random matrix theory. 

For instance, instead of $\mathcal{L}_t$, consider the random variables $X_n$ which take the value $0$ with probability $1-e^{-n}$ and $n^2$ with probability $e^{-n}$. Then $X_n$ tends to $0$ both in distribution and in the sense of moments: for any fixed $k\geq 0$,
$$
\bb{E}\, X_n^k \rightarrow 0.
$$
Yet
$$
\bb{E}\, e^{X_n} = (1-e^{-n}) + e^{n^2-n} \rightarrow\infty,
$$
so it is not true $\bb{E}\, e^{X_n} \sim \bb{E}\, e^0$.

This sort of a pathology is eliminated by the tail bound of Theorem \ref{quad_bound} and related bounds, and it is this control that is necessary to show that the average of ratios in \eqref{EqRatios} converges to a random matrix limit on the GUE Conjecture.

Our proof of Theorem \ref{quad_bound} is not long provided certain computational lemmas are taken on faith, so we will not sketch it here. We mention only that our proof depends on an application of Markov's inequality and a smoothing trick. It is, in this sense, an application of Soundararajan's method \cite{So} for bounding the moments of $\zeta(1/2+it)$ (see also Harper's refinement \cite{Ha}), used also his aforementioned work in \cite{So2}. 

Finally, we note that in the case that $\Re \alpha \leq \Re \beta$ and $\Im \alpha = \Im \beta$, there is an easier proof of the bound in Theorem \ref{ratio_bound}. In this case one has for all $t, T \geq 2$ a pointwise bound
$$
\zeta\Big(\frac{1}{2} + \frac{\alpha}{\log T} + it\Big) \ll_{\alpha,\beta} \zeta\Big(\frac{1}{2}+ \frac{\beta}{\log T} + it\Big).
$$
This is a consequence of Lemma 1 of \cite{Ra}. Nonetheless, such an inequality does not hold for other ranges of $\alpha$ and $\beta$, and Theorem \ref{ratio_bound} cannot in general be reduced to a pointwise estimate of this sort.

\textbf{Notation:} We follow standard conventions of analytic number theory, so that the notations $f(x) \ll g(x)$ and $f(x) = O(g(x))$ are interchangeable, with both meaning that $|f(x)|\leq C g(x)$ for all $x$, for a constant $C$. $f(x)\ll_A g(x)$ and $f(x) = O_A(g(x))$ both mean the constant $C$ may depend on $A$. The Fourier transform of a function $f$ is defined by $\hat{f}(\xi) := \int e^{-i2\pi x \xi} f(x)\,dx$. 

\emph{In what follows we will assume the Riemann hypothesis, without further statement of this assumption in Theorems, Lemmas, etc.}

\subsection{}
\textbf{Acknowledgments:} I thank Sandro Bettin, Alexei Borodin, Reda Chhaibi, Brian Conrey, Chris Hughes, Jon Keating, and Kurt Johansson for informative and encouraging discussions related to this work, and the anonymous referee for a careful reading and helpful suggestions.

\section{Bounding counts of zeros: a proof of Theorem \ref{quad_bound} and related bounds}
\label{2}

\subsection{}
As in many studies of the zeros of the zeta function, a principal tool is the explicit formula, due in stages to Riemann, Guinand, and Weil \cite{Ri, Gu, We}, relating the distribution of zeros to primes. A proof may be found in, for instance, \cite[pp. 410-416]{MoVa} or \cite[pp. 108-109]{IwKo}.

\begin{theorem}[The explicit formula]
\label{explicit}
For a compactly supported function $g$, piecewise continuous with finitely many discontinuities, such that $g(x) = \tfrac{1}{2}(g(x^-)+g(x^+))$ for all $x$ and $g(0) = \tfrac{1}{2}(g(x) + g(-x)) + O(|x|)$, we have,
$$
\lim_{V\rightarrow\infty} \sum_{|\gamma|< V}\hat{g}\Big(\frac{\gamma}{2\pi}\Big) - \int_{-V}^V \hat{g}\Big(\frac{\xi}{2\pi}\Big) \frac{\Omega(\xi)}{2\pi}\,d\xi = \int_{-\infty}^\infty (g(x) + g(-x)) e^{-x/2} d\big(e^x - \psi(e^x)\big),
$$
where
$$
\psi(x):= \sum_{n\leq x} \Lambda(n),
$$
with $\Lambda$ the von Mangoldt function, and 
$$
\Omega(\xi) := \frac{1}{2}\frac{\Gamma'}{\Gamma}\Big(\frac{1}{4}+i\frac{\xi}{2}\Big) +\frac{1}{2}\frac{\Gamma'}{\Gamma}\Big(\frac{1}{4}-i\frac{\xi}{2}\Big) - \log \pi.
$$
\end{theorem}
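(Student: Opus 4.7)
The plan is to derive the identity by contour integration. Let
$$\phi(s):=\int_{-\infty}^\infty g(x)\,e^{-(s-1/2)x}\,dx,$$
which is entire with rapid decay on vertical lines (owing to compact support of $g$) and satisfies $\phi(1/2+i\gamma)=\hat g(\gamma/2\pi)$. The strategy is to evaluate $\frac{1}{2\pi i}\int_{(c)}\phi(s)\bigl(-\zeta'/\zeta\bigr)(s)\,ds$, for $c>1$, in two ways. On the one hand, substituting the Dirichlet series $-\zeta'/\zeta(s)=\sum_n\Lambda(n)n^{-s}$ (absolutely convergent for $\Re s>1$) and swapping sum and integral identifies this integral with $\sum_n\Lambda(n)\,g(\log n)/\sqrt n$, which equals $\int g(x)e^{-x/2}\,d\psi(e^x)$ and furnishes one half of the prime side of the explicit formula.

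On the other hand, I would shift the contour to $\Re s=1-c<0$, truncating at horizontal height $\pm U$ chosen between consecutive ordinates $\gamma$. The strip between $\Re s=c$ and $\Re s=1-c$ contains the simple pole of $-\zeta'/\zeta$ at $s=1$ and the critical zeros $\rho=1/2+i\gamma$ with $|\gamma|<U$; the shift therefore picks up the residues $\phi(1)$ and $\sum_{|\gamma|<U}\phi(1/2+i\gamma)$. On the shifted line I would invoke the functional equation $\zeta(s)=\chi(s)\zeta(1-s)$ with $\chi(s)=\pi^{s-1/2}\Gamma((1-s)/2)/\Gamma(s/2)$; its logarithmic derivative writes $-\zeta'/\zeta(s)$ as the sum of a gamma-factor piece $-\chi'/\chi(s)$ --- which after $s=1/2+i\xi$ becomes precisely $\Omega(\xi)/(2\pi)$, since $\chi'/\chi$ is built from $\Gamma'/\Gamma((1-s)/2)$, $\Gamma'/\Gamma(s/2)$ and $-\log\pi$ --- and a piece $-\zeta'/\zeta(1-s)$, which on that line admits the Dirichlet expansion $\sum_n\Lambda(n)n^{-(1-s)}$ and contributes $\sum_n\Lambda(n)\,g(-\log n)/\sqrt n$. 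Assembling, and matching $\phi(1)$ together with the analogous residue $\phi(0)$ (arising symmetrically under $g\mapsto g(-\cdot)$) to the continuous piece $\int(g(x)+g(-x))e^{x/2}\,dx$ of $\int(g(x)+g(-x))e^{-x/2}\,d(e^x)$, yields the stated identity.

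The main obstacle is the very weak regularity permitted on $g$. Since $g$ need only be piecewise continuous, $\phi$ decays along vertical lines merely at rate $O(1/|\Im s|)$, so the vertical integrals and the limit $U\to\infty$ exist only in a principal-value sense. The horizontal sides vanish in this limit because $|\zeta'/\zeta(\sigma\pm iU)|\ll\log^2 U$ on lines between zeros, while $\phi$ decays fast on such horizontal pieces. The averaging hypotheses $g(x)=\tfrac12(g(x^+)+g(x^-))$ and $g(0)=\tfrac12(g(x)+g(-x))+O(|x|)$ are exactly what is needed so that the truncated Perron integrals converge to the symmetrised values appearing in the statement rather than to a mismatched jump average. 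The arithmetic computations are all routine; ensuring that these principal-value limits align on both sides is the delicate technical step.
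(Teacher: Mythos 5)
The paper itself gives no proof of this classical theorem---it simply cites Montgomery--Vaughan (pp.~410--416) and Iwaniec--Kowalski (pp.~108--109)---and your contour-integration argument (integrate $\phi(s)\bigl(-\zeta'/\zeta\bigr)(s)$ over $\Re s = c>1$, shift the contour past the pole at $s=1$ and the nontrivial zeros, apply the functional equation on the shifted line, and collect the digamma terms into $\Omega$) is precisely the standard proof found in those references. Your sketch also correctly isolates the genuinely delicate point, namely that $\hat g$ decays only like $1/|\xi|$, so the zero sum and the $\Omega$-integral must be truncated at the same height $V$ with only their difference converging; that matched-truncation analysis is what the cited sources carry out, and as an outline your proposal is sound.
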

Using Stirling's formula for the digamma function \cite[Cor. 1.4.5]{AnAsRo}, one may verify that,
\begin{equation}
\label{Stirlings}
\frac{\Omega(\xi)}{2\pi} = \frac{\log\big((|\xi|+2)/2\pi\big)}{2\pi} + O\Big(\frac{1}{|\xi|+2}\Big).
\end{equation}
This term in the explicit formula therefore corresponds to an approximation of the density of zeros near height $\xi$. On the other hand,
$$
\int_{-\infty}^\infty g(x) e^{-x/2}d\big(e^x - \psi(e^x)\big) = \int_0^\infty \frac{g(\log t)}{\sqrt{t}}\,dt - \sum_{n=1}^\infty \frac{g(\log n)}{\sqrt{n}}\Lambda(n),
$$
and here the term $\int g(\log t)/\sqrt{t}\,dt$ serves as an approximation to $\sum g(\log n)\Lambda(n)/\sqrt{n}$.

Motivated by the explicit formula, we adopt the following notation, for a function $\eta$ of quadratic decay:
$$
\langle \eta, \Z \rangle = \langle \eta, \Z_T(t) \rangle := \sum_\gamma\eta\Big(\frac{\log T}{2\pi}(\gamma-t)\Big),
$$
$$
\langle \eta, \Z^{o} \rangle = \langle \eta, \Z_T^{o}(t) \rangle := \int_{-\infty}^\infty \eta\Big(\frac{\log T}{2\pi}(\xi-t)\Big) \frac{\Omega(\xi)}{2\pi}\,d\xi,
$$
$$
\langle \eta, \widetilde{\Z} \rangle = \langle \eta, \widetilde{\Z}_T(t) \rangle := \langle \eta, \Z \rangle - \langle \eta, \Z^{o} \rangle.
$$

Note that there is no question about the convergence of the sums or integrals in these definitions. We will later generalize this notation slightly, but we need not worry about this generalization for the moment. Note that for typographical reasons we will sometimes write $\Z$ or $\Z_T$ in place of $\Z_T(t)$. Unless otherwise indicated, $\Z = \Z_T = \Z_T(t)$, and likewise for $\Z^o$ and $\widetilde{\Z}$. 

We will see that the quantity $\langle \eta, \widetilde{\Z} \rangle$ and therefore $\langle \eta, \Z \rangle$ is approximated by a Dirichlet polynomial of length depending on the support of $\hat{\eta}$. It is in this way that we will control these quantities.

\subsection{}
Let $B_0$ be an absolute constant to be defined shortly. We define the function
\begin{equation}
\label{G_def}
G(\xi) := B_0\Big[\Big(\frac{\sin \pi (\xi + 1/4)}{\pi(\xi+1/4)}\Big)^2 + \Big(\frac{\sin \pi (\xi - 1/4)}{\pi (\xi - 1/4)}\Big)^2\Big],
\end{equation}
with Fourier transform,
\begin{equation}
\label{G_hat}
\hat{G}(x) = B_0(1-|x|)_+ \big(e^{i\pi x/2} + e^{-i\pi x/2}\big),
\end{equation}
where $B_0$ is an absolute constant chosen so that
\begin{equation}
\label{G_dom}
Q(\xi) \leq G(\xi)\,\quad \forall \xi\in \bb{R}.
\end{equation}
(In fact, $B_0$ may be chosen to be $2\pi^2$, but we only need to know such a constant exists, which is apparent from examining $G(\xi)/Q(\xi)$.) There is nothing very special about this test function $G$; we have chosen it to satisfy \eqref{G_dom} and
\begin{equation}
\label{G_supp}
\supp \hat{G} \subseteq [-1,1].
\end{equation}
As a consequence of \eqref{G_dom}, writing $G_k(\xi):= G(\xi/k)$, we see that for all $k\geq 1$,
\begin{equation}
\label{G_bound}
Q(\xi) \leq G_k(\xi),\quad \forall x\in \mathbb{R}.
\end{equation}
Moreover,
\begin{equation}
\label{Ghat_support}
\supp \hat{G}_k \subseteq [-1/k,1/k],\quad\,\textrm{with}\,\quad |\hat{G}_k(x)| \leq 2 B_0\, k (1-|kx|)_+.
\end{equation}

To make for a cleaner presentation, we work with notation from elementary probability, letting $t$ be a random variable uniformly distributed on the interval $[T,2T]$. The tail bound Theorem \ref{quad_bound} then becomes the claim that uniformly for $x\geq 2$ and $T\geq 1$,
$$
\bb{P}(\langle Q, \Z \rangle \geq x) \leq e^{-C x \log x}.
$$

The reason we have defined $G_k$ is that the size of $\langle Q, \Z \rangle$ can be controlled by $\langle G_k, \Z \rangle$, and that this in turn can be controlled by $\langle G_k, \Z^{o} \rangle$ and $\langle G_k, \widetilde{\Z}\rangle$. It is easy to control $\langle G_k, \Z^{o} \rangle$, since the measure defining this quantity is very regular. On the other hand $\langle G_k, \widetilde{\Z}\rangle$ can be well-controlled up to the $k$-th moment, with $\langle G_k, \widetilde{\Z}\rangle$ in general not being much larger then $\langle G, \widetilde{\Z} \rangle$. More exactly, we prove the following estimates.

\begin{lem}
\label{bulk}
For an absolute constant $B_1$, uniformly for $T\geq 2$,
$$
\langle Q, \Z_T(t) \rangle \leq B_1 \log T,\quad \forall t\in[T,2T].
$$
\end{lem}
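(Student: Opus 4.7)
The plan is to prove this by a direct pigeonhole-style decomposition of the sum over zeros, relying only on the classical fact (a consequence of the Riemann--von Mangoldt formula) that for any $T \geq 2$ the number of zeros $\gamma$ with $|\gamma - T| \leq 1$ is $O(\log T)$. No cancellation or delicate analysis is required: the quadratic decay of $Q$ alone is enough to localize the sum to zeros within $O(1)$ of $t$, where the density bound gives $\log T$ zeros, while zeros farther away contribute a convergent tail that is smaller.

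More concretely, for $t\in[T,2T]$ I would partition the zeros into unit shells $S_n(t) = \{\gamma : n \leq |\gamma - t| < n+1\}$ for $n = 0,1,2,\ldots$. The Riemann--von Mangoldt estimate gives $|S_n(t)| \ll \log(T+n+2)$ uniformly in $t \in [T,2T]$. On $S_0(t)$ use the trivial bound $Q \leq 1$, contributing $\ll \log T$. On $S_n(t)$ for $n \geq 1$, the quadratic decay of $Q$ gives
$$Q\!\left(\tfrac{\log T}{2\pi}(\gamma - t)\right) \leq \frac{1}{1 + (n\log T/2\pi)^2} \ll \frac{1}{n^2 \log^2 T}.$$
Putting these together,
$$\langle Q, \Z_T(t)\rangle \;\ll\; \log T \;+\; \frac{1}{\log^2 T}\sum_{n=1}^\infty \frac{\log(T+n+2)}{n^2}.$$
The remaining sum is $\ll \log T$ (split at $n = T$: the head is $\ll \log T \cdot \sum n^{-2}$ and the tail is $\ll \sum_{n>T} \log n / n^2 \ll (\log T)/T$), so after dividing by $\log^2 T$ the second term is $O(1/\log T)$, which is absorbed into the main term. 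Thus $\langle Q, \Z_T(t)\rangle \leq B_1 \log T$ for a suitable absolute $B_1$.

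There really is no main obstacle: the lemma is a pointwise deterministic bound of the sort one expects to hold on RH (in fact unconditionally), and its role in the paper is to serve as a baseline supremum bound that will later be combined with the smoothing and Markov-inequality machinery to yield the much sharper tail estimate of Theorem \ref{quad_bound}. The only thing to be careful about is uniformity in $t \in [T,2T]$, which is guaranteed because the zero-density estimate $N(U+1)-N(U) \ll \log U$ holds uniformly for $U \geq 2$.
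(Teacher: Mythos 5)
Your proof is correct and is essentially the same argument as the paper's: both decompose the sum over zeros into unit intervals around $t$, invoke the (unconditional) density bound $N(u+1)-N(u)\ll \log(|u|+2)$, and use the quadratic decay of $Q$ to sum the contributions of the shells. The paper indexes the intervals by $k\in\mathbb{Z}$ and splits $\log(|t+k|+2)\ll\log(T+2)+\log(|k|+2)$ rather than using your shells $S_n(t)$ and the bound $\log(T+n+2)$, but this is a purely cosmetic difference.
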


\begin{lem}
\label{moments}
For an absolute constant $B_2,$ uniformly for $T\geq 2$ and $2\ell \leq k$, we have
$$
\bb{E} |\langle G_k, \widetilde{\Z} \rangle |^{2\ell} \leq (B_2\,\ell)^\ell.
$$
\end{lem}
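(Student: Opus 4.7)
The strategy is to use the explicit formula to express $\langle G_k, \widetilde{\Z}\rangle$ as a short Dirichlet polynomial in prime powers, and then compute its $2\ell$-th moment by a diagonal expansion.

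First, I would apply Theorem \ref{explicit} with test function $g_t(x) := (\log T)^{-1} e^{ixt}\hat{G}_k(-x/\log T)$, whose Fourier transform $\hat{g}_t(\xi) = G_k(\log T \cdot (\xi - t/(2\pi)))$ makes the left-hand side of the explicit formula exactly $\langle G_k, \widetilde{\Z}\rangle$. By the support property \eqref{Ghat_support}, $g_t$ is supported in $|x| \leq \log T/k$, so the right-hand side of the formula reduces to
$$
\langle G_k, \widetilde{\Z}\rangle = \mathcal{M}(t) - \frac{2}{\log T}\sum_{n \leq T^{1/k}}\frac{\Lambda(n)\cos(t\log n)}{\sqrt{n}}\hat{G}_k\Big(\tfrac{\log n}{\log T}\Big),
$$
where $\mathcal{M}(t)$, the integral portion coming from $d(e^x)$, is a highly oscillatory Fourier integral with modulation $t\log T$ and hence $O_A(t^{-A})$ for any $A$ by integration by parts.

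Second, writing $2\cos\theta = e^{i\theta}+e^{-i\theta}$ and expanding the $2\ell$-th power, the moment becomes (modulo lower-order terms)
$$
\frac{1}{(\log T)^{2\ell}}\sum_{(n_i),(\epsilon_i)}\prod_{i=1}^{2\ell}\frac{\Lambda(n_i)\hat{G}_k(\log n_i/\log T)}{\sqrt{n_i}}\cdot\bb{E}[N^{it}],
$$
where $N = \prod n_i^{\epsilon_i}\in[T^{-2\ell/k},T^{2\ell/k}]$ and, since $t$ is uniform on $[T,2T]$, $\bb{E}[N^{it}]$ equals $1$ when $N=1$ and is $O(\min(1, 1/(T|\log N|)))$ otherwise. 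The condition $2\ell \leq k$ is precisely what ensures $N \leq T$ always, so the mean-value theorem for Dirichlet polynomials applies: the off-diagonal contribution ($N \neq 1$) is controlled by $|\log N| \gg 1/N \geq T^{-2\ell/k}$ and thus contributes $O(1)$ in total.

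Third, for the diagonal $N = 1$: unique factorization of prime powers forces the multiset $\{n_i : \epsilon_i = +1\}$ to coincide with $\{n_i : \epsilon_i = -1\}$, producing $(2\ell)!/(2^\ell \ell!) \sim (2\ell/e)^\ell$ sign-arrangements per matched unordered $\ell$-tuple of prime powers. Combined with the variance estimate
$$
\frac{1}{(\log T)^2}\sum_{n \leq T^{1/k}}\frac{\Lambda(n)^2}{n}\hat{G}_k\Big(\tfrac{\log n}{\log T}\Big)^2 \ll 1,
$$
which follows from $\sum_{p \leq X}(\log p)^2/p = \log X + O(1)$ together with $|\hat{G}_k(\xi)| \leq 2B_0 k(1-k|\xi|)_+$, raising to the $\ell$-th power yields the diagonal bound $(B_2\ell)^\ell$, analogous to Gaussian moments.

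The main obstacle will be the off-diagonal analysis at the borderline case $2\ell = k$, where the mean-value theorem provides no power saving in $T$; one may need a smoothed cutoff in the $t$-integral (replacing $\mathbf{1}_{[T,2T]}$ by a bump function, permissible since the uniform measure is dominated by such) or to slightly shrink $k$ via \eqref{G_bound}. One must also verify that the prime-power (rather than prime) terms in the diagonal, arising from $n = p^j$ with $j \geq 2$, remain of lower order—straightforward from $\sum_{j\geq 2}\sum_p (\log p)^2/p^j \ll 1$.
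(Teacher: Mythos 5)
Your overall strategy is the same as the paper's: the explicit formula converts $\langle G_k,\widetilde{\Z}\rangle$ into a prime-power Dirichlet polynomial supported on $n\le T^{1/k}$ plus a harmless archimedean term, and the $2\ell$-th moment is then a Selberg--Soundararajan type computation whose diagonal produces the Gaussian-type bound $(B_2\ell)^\ell$. The genuine gap is in your off-diagonal step. Because you expand with independent signs $\epsilon_i=\pm1$, the numerator and denominator of $N=\prod n_i^{\epsilon_i}$ can individually be as large as $T^{2\ell/k}$, so your lower bound $|\log N|\gg T^{-2\ell/k}$ only yields $\bb{E}[N^{it}]\ll T^{2\ell/k-1}$, which gives no saving at the top of the range $2\ell\le k$. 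Summing this per-term bound against the total coefficient mass, which is of size about $T^{\ell/k}$ (times factors that are $O(1)^{\ell}$ since $k\le \log T/\log 2$), leaves an off-diagonal contribution of order $T^{3\ell/k-1}$; this is not $O\big((B_2\ell)^\ell\big)$ once $\ell>k/3$, and for fixed $k=2\ell$ it grows like a positive power of $T$. So the claim that the off-diagonal ``contributes $O(1)$ in total'' does not follow from what you wrote, and the problematic case $2\ell=k$ is precisely the one used later in the proof of Theorem \ref{quad_bound}, so it cannot be sacrificed. Neither of your proposed remedies repairs this: smoothing the $t$-average only helps when $T|\log N|$ is large, whereas the troublesome terms are those with $T|\log N|$ of moderate size; and shrinking $k$ proves a lemma with a smaller admissible range of $\ell$, not the statement as given.

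The standard fix, and the one the paper uses by invoking Soundararajan's Lemma 3, is to write $|s|^{2\ell}=|s^\ell|^2$, so that in the expansion every off-diagonal ratio has numerator and denominator that are each products of exactly $\ell$ prime powers $\le T^{1/k}$, hence each at most $T^{\ell/k}\le T^{1/2}$; then the Montgomery--Vaughan mean value theorem (or even the crude bound $1/|\log(m/n)|\le\max(m,n)\le T^{1/2}$) shows the off-diagonal total is smaller than the diagonal by a factor $O(T^{2\ell/k-1})\le O(1)$, uniformly for $2\ell\le k$. (Alternatively, your sign expansion can be rescued by noting that any term with $|\log N|\le\log 2$ automatically has both numerator and denominator $\ll T^{\ell/k}$, restoring the saving $T^{\ell/k-1}\le T^{-1/2}$ per term.) The paper also separates the prime powers $p^r$, $r\ge2$, and recombines via Minkowski's inequality, but your treatment via $\Lambda(n)$ is an acceptable cosmetic variant. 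One smaller slip: the archimedean term is not $O_A(t^{-A})$, since the amplitude $\hat G_k$ has corners and carries a factor $e^{x/2}\le T^{1/(2k)}$; a single integration by parts, or the exact evaluation as $G$ at the complex points $\frac{\log T}{2\pi k}(\pm i/2-t)$ as in the paper (which is $\ll T^{-7/4}$), gives the $O(1)$ you actually need.
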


\begin{lem}
\label{pointwise_regular}
For an absolute constant $B_3$, uniformly for $T\geq 2$ and $k \leq \sqrt{T}$,
$$
\langle G_k, \Z^o_T(t)\rangle  \leq B_3\, k,\quad \forall t\in [T,2T].
$$ 
\end{lem}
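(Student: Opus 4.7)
The plan is to change variables in the defining integral so that $G_k$ becomes centered at the origin, and then invoke Stirling's asymptotic \eqref{Stirlings} to see that on the bulk of the integration range the weight $\Omega(\xi)/2\pi$ is of order $\log T$, so that the factor $1/\log T$ coming from the Jacobian cancels and one is left with a scale-free integral of $G_k$ whose size is easily seen to be $O(k)$.

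Concretely, I would begin by substituting $u = \frac{\log T}{2\pi}(\xi-t)$, so that
$$
\langle G_k, \Z^o_T(t)\rangle = \frac{1}{\log T}\int_{-\infty}^\infty G_k(u)\,\Omega\Big(t + \frac{2\pi u}{\log T}\Big)\,du.
$$
Next I would split the integration into a bulk range $|u|\leq T\log T$ and a tail $|u|>T\log T$. For the bulk, since $t\in[T,2T]$ we have $\big|t+2\pi u/\log T\big|\ll T$, so \eqref{Stirlings} gives $\Omega(\cdot) = \log T + O(1)$. This region therefore contributes at most
$$
\frac{\log T + O(1)}{\log T}\int_{-\infty}^\infty G_k(u)\,du \ll k,
$$
since $\int_{\mathbb R} G_k = k \int_{\mathbb R} G$ is $O(k)$, as $G$ is the sum of two integrable squared sinc functions. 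For the tail, I would use the quadratic decay $G_k(u)\ll k^2/u^2$ together with $\Omega(\xi)\ll \log(|\xi|+2)$, which bounds that contribution by
$$
\frac{1}{\log T}\int_{|u|>T\log T}\frac{k^2}{u^2}\log\Big(\frac{|u|}{\log T}+2\Big)du \ll \frac{k^2}{T},
$$
which is $O(1)$ under the hypothesis $k\leq\sqrt T$. Combining these and choosing $B_3$ suitably large yields the lemma.

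There is no serious obstacle here; the argument is essentially bookkeeping around Stirling's formula. The only minor subtleties are that the $O(1/(|\xi|+2))$ error term in \eqref{Stirlings} and the possible negativity of $\log((|\xi|+2)/2\pi)$ near $\xi=0$ must not spoil a clean upper bound, but both only affect a bounded range of $\xi$ and can be absorbed into the absolute constant. The hypothesis $k\leq\sqrt T$ is used only to control the tail integral; one could likely relax it to $k\leq T^{1-\epsilon}$, but the stated range is what is needed for later applications.
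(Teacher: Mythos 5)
Your proof is correct and follows essentially the same route as the paper: the same change of variables, the same splitting of the integral at $|u| = T\log T$, the bulk estimated via Stirling's formula \eqref{Stirlings} giving $\Omega \ll \log T$ and hence a contribution $\ll k$ from $\int G_k \ll k$, and the tail bounded using the quadratic decay of $G_k$ to give $O(k^2/T)$, absorbed using $k \leq \sqrt{T}$.
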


The first gives an extremely course upper bound for the number of zeros that may be counted by the test function $Q$, the second controls the moments of $\langle G_k, \widetilde{\Z}\rangle$ as described above, and the third controls the regular approximation $\langle G_k, \Z^o\rangle$ to the count of zeros by $G_k$.

These lemmas have standard proofs that we turn to at the end of this section -- the most nontrivial is Lemma \ref{moments} and is proved by approximating $\langle G_k, \widetilde{\Z} \rangle$ by a Dirichlet polynomial -- but before doing so, we show that with these computational estimates in hand, Theorem \ref{quad_bound} (our tail bound for zeros) follows quickly. 

\begin{proof}[Proof of Theorem \ref{quad_bound}]
Note first that in the case that $x > B_ 1 \log T$, Lemma \ref{bulk} implies that
$$
\bb{P}(\langle Q, \Z \rangle \geq x) = 0.
$$

We may therefore assume $x \leq B_1 \log T$. Lemma \ref{moments} allows us to see from Markov's inequality that for even integers $k$ and positive $y$,
\begin{equation}
\label{markov}
\bb{P}( \langle G_k, \widetilde{\Z} \rangle \geq y) \leq \frac{1}{y^k} \bb{E} |\langle G_k, \widetilde{\Z}\rangle |^k \ll \frac{(B_2\,k)^{k/2}}{y^k}.
\end{equation}
Yet
\begin{equation}
\label{positivity_prep}
\langle Q, \Z \rangle \leq \langle G_k, \Z \rangle = \langle G_k, \widetilde{\Z} \rangle + \langle G_k, \Z^o\rangle.
\end{equation}
Thus,
\begin{align}
\label{positivity}
\notag \bb{P}(\langle Q, \Z \rangle \geq x) &\leq \bb{P}(\langle G_k, \widetilde{\Z} \rangle + \langle G_k, \Z^o\rangle \geq x) \\
&\leq \bb{P}\big(\langle G_k, \widetilde{\Z} \rangle \geq x - B_3\, k \big),
\end{align}
for all even $k \leq \sqrt{T}$, with the last line following from Lemma \ref{pointwise_regular}. With no loss of generality, we may assume $x\geq 4 B_3$, and consider $k$ defined to be the positive even integer satisfying
$$
\frac{x}{2B_3} - 2 < k \leq \frac{x}{2B_3}
$$
so that in particular 
$$
x - B_3 k \geq x/2.
$$
As long as $T$ is large enough that $B_1/2B_2 \, \log T \leq \sqrt{T}$, then certainly $k\leq \sqrt{T}$ (since we are considering the case $x \leq B_1\log T$). Thus from \eqref{markov} and \eqref{positivity},
\begin{equation}
\label{quad_bound_eq}
\bb{P}(\langle Q, Z \rangle \geq x) \leq \frac{(B_2 \cdot x/2B_3)^{x/4B_3}}{(x/2)^{x/2B_3-2}} \ll e^{-C x\log x},
\end{equation}
for an absolute constant $C$.\footnote{An argument with more bookkeeping, though still one which makes no attempt at optimization, shows that one may take any constant $C < 1/16\pi^2$, for instance.} 

In remains to verify our claim in the case in which $T$ is small enough that $B_1/2B_2 \log T > \sqrt{T}$. But this bounded range of $T$ can at most alter the implicit constant in \eqref{quad_bound_eq}.
\end{proof}

\textbf{Remark:} There is a slightly different approach to this theorem which some readers may prefer. Instead of the inequality \eqref{positivity_prep}, we may make use of a mollification formula of Selberg \cite[Th. 1]{Se1}, which approximates the classical function $S(t)$ by a Dirichlet polynomial with error terms whose size depends on the length of the Dirichlet polynomial. One may then compute moments of, say, $S(t+1/\log T) - S(t)$ in the same way we have here, with the Dirichlet polynomial replacing the quantity $\langle G_k, \widetilde{\Z} \rangle$. 

Indeed, to reflect on our approach, in the lemmas, it has been to show the following:
\begin{equation}
\label{remark1}
N\Big(t+\frac{1}{\log T}\Big) - N(t) \ll \langle Q, \Z_T(t) \rangle \ll \langle G_k, \widetilde{\Z}_T(t) \rangle + k,
\end{equation}
with $k \geq 1$. By the explicit formula, we will reduce $\langle G_k, \widetilde{\Z}\rangle$ to a Dirichlet polynomial in the proof of Lemma \ref{moments} below in order to compute its moments. In slightly more traditional notation, with such a Dirichlet polynomial already put in place of $\langle G_k, \widetilde{\Z} \rangle$, \eqref{remark1} could be rewritten
\begin{equation}
\label{remark2}
N\Big(t+\frac{1}{\log T}\Big) - N(t) \ll \frac{1}{\log x} \Im \sum_{p \leq x} \Big(1-\frac{\log p}{\log x}\Big)\frac{\log p}{p^{1/2+it}} + \frac{\log T}{\log x},
\end{equation}
for $t\in [T,2T]$ and all $2 \leq x \leq T$ (and $x$ related to $k$ above by $k = \tfrac{\log T}{\log x}$). For such Dirichlet polynomials, we will be able to bound $k$-th moments, and thereby control how frequently $N(t+1/\log T)-N(t)$ can be large.

We have taken the route and notation that we have because we will make use of the same formalism elsewhere in this paper; we apply it to other estimates for zeta zeros below, and it applies almost without change to study the eigenvalues of the unitary group, for instance.

\textbf{Remark:} Without the Riemann hypothesis, the ordinates $\gamma$ needn't be real, and the relationship $\langle Q, \Z \rangle \leq \langle G_k, \Z \rangle$ ceases to hold; the same is true of \eqref{remark2}. On the other hand, Selberg \cite[Th. 2]{Se2} also proves an uncondtional variant of his approximation for $S(t)$, and this has been used by Fujii \cite[p. 245]{Fu} to compute moment bounds for $S(t+1/\log T) - S(t)$ unconditionally. Bounds that can be obtained unconditionally in this way are slightly worse than what we have derived assuming RH. Unconditionally, using the technique, one can prove $
\frac{1}{T}\,\mathrm{meas}\big\{ t\in [T,2T]:\, N(t+1/\log T)-N(t) \geq x\big\} \ll e^{-c x}
$, where $c$ is an absolute constant, but seemingly no better. It would be interesting to see if this could be improved.

\textbf{Remark:} Probably the tail bounds in Theorem \ref{quad_bound} and Corollary \ref{interval_bound}, while sufficient for our purposes, are not optimal. The bounds here would correspond to the `right answer' were the zeros were modeled by a Poisson process, but since zeros of the zeta function tend to repel each other one might guess that the counts are sub-gaussian in Theorem \ref{quad_bound} and Corollary \ref{interval_bound}. Such an estimate is true for eigenvalues of the unitary group -- see \eqref{gaussian_quad_bound_unitary} below -- but for zeta zeros seemingly this is a harder statement to prove.

\subsection{}
There is another result similar to Theorem \ref{quad_bound} that we will require, but which is somewhat more technical in its statement and proof. We generalize the notation $\langle \eta, \widetilde{\Z}\rangle$ to a wider class of functions than it was applied to before. In particular, we let
$$
\langle \eta, \widetilde{\Z} \rangle = \langle \eta, \widetilde{\Z}_T(t) \rangle := \lim_{V\rightarrow \infty} \sum_{|\gamma| < V} \eta\Big(\frac{\log T}{2\pi}(\gamma-t)\Big) - \int_{-V}^V \eta\Big(\frac{\log T}{2\pi}(\xi-t)\Big) \frac{\Omega(\xi)}{2\pi}\,d\xi,
$$
where $\eta, T,$ and $t$ are such that the limit exists. This is consistent with our previous use of this notation. Likewise, when the limit exists,
$$
\langle \eta, \Z \rangle = \langle \eta, \Z_T(t) \rangle:= \lim_{V\rightarrow \infty} \sum_{|\gamma| < V} \eta\Big(\frac{\log T}{2\pi}(\gamma-t)\Big),
$$
$$
\langle \eta, \Z^o \rangle = \langle \eta, \Z^o_T(t) \rangle:= \lim_{V\rightarrow \infty}\int_{-V}^V \eta\Big(\frac{\log T}{2\pi}(\xi-t)\Big) \frac{\Omega(\xi)}{2\pi}\,d\xi.
$$
By the explicit formula, it may be verified that $\langle \eta, \widetilde{\Z} \rangle$ exists whenever $\eta(\xi) = \hat{f}(\xi)$, for a function $f$ that is (i) compactly supported, (ii) piecewise continuous with finitely many discontinuities, (iii) satisfying $f(x) = \tfrac{1}{2}(f(x^+) + f(x^-))$, and (iv) with $f$ odd. A more specific example of such a limit existing where the sums and integral do not absolutely converge is furnished by the function
\begin{equation}
\label{J_def}
J(\xi):= \frac{2\pi \xi}{1+(2\pi \xi)^2}.
\end{equation}
In this case, $J(\xi) = \hat{f}(\xi)$, for the function 
\begin{equation}
\label{J_fourier}
f(x):= -\sgn(x) e^{-|x|}/2i,
\end{equation} 
so one may see by the above discussion that $\langle J, \widetilde{Z}_T(t)\rangle$ is well defined for all $T$ and $t$. Alternatively, one may see rather more simply that the limit defining $\langle J, \widetilde{Z}\rangle$ converges by exploiting the symmetry of the zeros $\gamma$ and the function $\Omega$. Indeed, let us verify this (and prove a little more) for $\langle J, \Z^o\rangle$, in a lemma we will need later.

\begin{lem}
\label{J_regular}
Uniformly for $T\geq 2$,
$$
\langle J, \Z^o_T(t)\rangle = O(1/\log T),\quad \forall t\in[T,2T].
$$
\end{lem}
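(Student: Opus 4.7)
The plan is to identify $J(\tfrac{\log T}{2\pi}(\xi-t))$ as a rescaled conjugate Poisson kernel, use the evenness of $\Omega$ to convert the integral into an odd-part integral that can be controlled by Stirling's formula, and then carry out a piecewise estimate on three natural ranges of $s = \xi - t$.

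First I would observe that for $\epsilon := 1/\log T$,
$$J\Big(\tfrac{\log T}{2\pi}(\xi - t)\Big) = \frac{\epsilon(\xi-t)}{(\xi - t)^2 + \epsilon^2},$$
so that
$$\int_{-V}^V J\Big(\tfrac{\log T}{2\pi}(\xi-t)\Big) \tfrac{\Omega(\xi)}{2\pi}\,d\xi = \frac{\epsilon}{2\pi} \int_{-V}^V \frac{(\xi-t)\Omega(\xi)}{(\xi - t)^2 + \epsilon^2}\,d\xi.$$
After the substitution $s = \xi - t$, the asymmetric endpoint piece $\int_{-V-t}^{-(V-t)}$ has length $2t$ and integrand of size $O(\log V/V)$ (using $|\Omega|\ll \log V$ and $|s|/(s^2+\epsilon^2)\ll 1/V$ in that range), so it vanishes in the limit $V\to\infty$. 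I may therefore pass to the symmetric limit in $s$; since $\Omega(-\xi)=\Omega(\xi)$ while the kernel $s/(s^2+\epsilon^2)$ is odd in $s$, only the odd-in-$s$ part of $\Omega(t+s)$ survives, giving
$$\langle J, \Z_T^o(t)\rangle = \frac{\epsilon}{2\pi} \int_0^\infty \frac{s\,[\Omega(t+s) - \Omega(t-s)]}{s^2+\epsilon^2}\,ds.$$

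Next I would bound this reduced integral using Stirling's approximation \eqref{Stirlings}, splitting the domain into three ranges. For $s \in [0, t/2]$, the mean value theorem with $\Omega'(\xi) = O(1/|\xi|)$ (from differentiating \eqref{Stirlings}) yields $|\Omega(t+s) - \Omega(t-s)| \ll s/t$, contributing $O\big(t^{-1}\int_0^{t/2} s^2/(s^2+\epsilon^2)\,ds\big) = O(1)$. For $s \in [t/2, 2t]$, where $s/(s^2+\epsilon^2) \ll 1/t$, Stirling gives $|\Omega(t+s) - \Omega(t-s)| \ll \log(t/(|s-t|+1)) + 1/(|s-t|+1)$; substituting $r = |s-t|$ reduces the bound to $t^{-1}\int_0^t [\log(t/(r+1)) + 1/(r+1)]\,dr = O(1)$. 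For $s \geq 2t$, a direct application of \eqref{Stirlings} gives $|\Omega(s+t) - \Omega(s-t)| \ll t/s$, contributing $O\big(\int_{2t}^\infty t/s^2\,ds\big) = O(1)$.

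Summing the pieces shows that the reduced integral is $O(1)$ uniformly in $t \in [T, 2T]$ and $T \geq 2$, hence $\langle J, \Z_T^o(t)\rangle \ll \epsilon = 1/\log T$, which proves the lemma. The most delicate step is the middle range $s \in [t/2, 2t]$: the kernel is only of size $1/t$ there, but $|\Omega(t+s) - \Omega(t-s)|$ can grow like $\log t$ near $s = t$, and one must check that this logarithmic behavior stays integrable with enough room to spare to give $O(1)$ and not $O(\log T)$. Once this piece is handled, the rest of the estimate is routine bookkeeping.
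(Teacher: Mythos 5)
Your proof is correct and follows essentially the same route as the paper: both reduce $\langle J, \Z^o_T(t)\rangle$ by the odd symmetry of the kernel to $\frac{\epsilon}{2\pi}\int_0^\infty \frac{s\,[\Omega(t+s)-\Omega(t-s)]}{s^2+\epsilon^2}\,ds$ (the paper phrases this in the variable $y=\frac{\log T}{2\pi}s$) and then control the difference of $\Omega$-values via Stirling's formula \eqref{Stirlings}. The only differences are presentational: you make explicit the endpoint correction and the ``slightly tedious but straightforward'' final integral estimate via a three-range split, using the standard bound $\Omega'(\xi)\ll 1/(|\xi|+1)$ in the near range (which requires the derivative form of Stirling rather than literally differentiating the $O$-term, and the middle-range bound should carry a harmless additive $O(1)$), whereas the paper uses a Taylor expansion of the logarithm and leaves the bookkeeping to the reader.
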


\begin{proof}
By the symmetry of $J$,
\begin{align*}
\langle J, \Z^o_T(t)\rangle =& \lim_{V\rightarrow\infty} \int_0^V J(y) \Big[\Omega\Big(t+ \frac{2\pi y}{\log T}\Big) - \Omega\Big(t-\frac{2\pi y}{\log T}\Big)\Big]\,\frac{dy}{\log T} \\
\ll& \frac{1}{\log T}\int_0^\infty J(y)\Big[\min\Big(\frac{y^2}{t^2\log^2 T}, \frac{t^2 \log^2 T}{y^2}\Big) + O\Big(\frac{1}{\big|t-\tfrac{2\pi y}{\log T}\big|+2}\Big)\Big]\,dy,
\end{align*}
where in the second step in approximating $\Omega$, we have used Stirling's formula \eqref{Stirlings} and then simple Taylor series estimates for the logarithm function. (Note that in the first line the integrand is positive, so the integral converges absolutely or not at all.) It is now slightly tedious but straightforward to verify that the integral is $O(1)$ and therefore the entire expression is $O(1/\log T)$.
\end{proof}

The analogue of Theorem \ref{quad_bound}, our earlier tail bound, that we require is the following.

\begin{theorem} (Tail bound for signed counts)
\label{J_bound}
For all $x \geq 2$ and all $T \geq 2$,
$$
\bb{P}(\, |\langle J, \widetilde{\Z}\rangle| \geq x\,) \ll e^{-C x\log x},
$$
where the constant $C$ and the implicit constant are absolute.
\end{theorem}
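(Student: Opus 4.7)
The plan is to follow the three-part strategy used for Theorem \ref{quad_bound}: a coarse bulk bound on $|\langle J, \widetilde{\Z}\rangle|$, a high-moment estimate for a smoothed truncation of $J$, and Markov's inequality with an optimal choice of truncation parameter.

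First I would establish $|\langle J, \widetilde{\Z}_T(t)\rangle| \ll \log T$, uniformly for $t \in [T,2T]$. This reduces matters to the range $2 \leq x \ll \log T$ and should follow from the Hadamard product for $\zeta$: one checks that $\langle J, \widetilde{\Z}\rangle$ coincides, up to a term handled by Lemma \ref{J_regular}, with $(\log T)^{-1}\Re\,(\zeta'/\zeta)(1/2 + c/\log T + it)$ for a small fixed $c>0$, and the latter is $\ll \log T$ by standard partial-fraction/Hadamard estimates valid on RH.

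The core step is a moment bound. Writing $J = \hat f$ with $f(x) = -\sgn(x)e^{-|x|}/(2i)$, I would introduce a truncation $J^{(k)} := \hat f_k$, where $f_k(x) := f(x)\,h(kx)$ for a fixed smooth, even cutoff $h$ supported in $[-1,1]$ and equal to $1$ on $[-1/2,1/2]$. Then $f_k$ is odd, piecewise smooth, and supported in $[-1/k, 1/k]$, so Theorem \ref{explicit} expresses $\langle J^{(k)}, \widetilde{\Z}_T(t)\rangle$ as a short Dirichlet polynomial over primes $p \leq T^{1/k}$. The Soundararajan-style pairing argument underlying Lemma \ref{moments} then yields $\bb{E}\,|\langle J^{(k)}, \widetilde{\Z}\rangle|^{2\ell} \leq (B\ell)^\ell$ for $2\ell \leq k$.

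The principal obstacle is the remainder $\langle J - J^{(k)}, \widetilde{\Z}\rangle$, since the pointwise positivity argument that made the analogous step in Theorem \ref{quad_bound} clean is unavailable here: $J$ takes both signs and decays only like $|\xi|^{-1}$. The difference $J - J^{(k)}$ is the Fourier transform of $f(x)(1-h(kx))$, a $C^\infty$ function vanishing on $|x| \leq 1/(2k)$ and decaying exponentially; repeated integration by parts yields an estimate of the form $|J - J^{(k)}|(\xi) \ll k^a\, Q(\xi)$ with a small fixed exponent $a$, so that
\[
|\langle J - J^{(k)}, \widetilde{\Z}\rangle| \leq k^a\,\langle Q, \Z\rangle + O(k^a),
\]
where the constant term is bounded as in Lemma \ref{J_regular}. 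Combining the moment bound via Markov's inequality with the tail for $\langle Q, \Z\rangle$ furnished by Theorem \ref{quad_bound}, and balancing $k$ appropriately against $x$, then delivers the desired tail estimate $\bb{P}(|\langle J, \widetilde{\Z}\rangle| \geq x) \ll e^{-Cx\log x}$. Carefully tracking this tradeoff, so that both the moment contribution and the remainder contribution produce bounds of the required $x\log x$ exponential rate, is the main technical bookkeeping, and may cost a smaller constant $C$ than appears in Theorem \ref{quad_bound} itself.
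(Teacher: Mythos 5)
Your overall architecture mirrors the paper's: a bulk bound of size $\log T$, a truncation of $J$ obtained by cutting off $f(x)=-\sgn(x)e^{-|x|}/2i$ to support $|x|\le 1/k$ (so that the explicit formula yields a short Dirichlet polynomial over $p\le T^{1/k}$), and Markov's inequality. The bulk step and the moment step are essentially sound (the bulk bound involves $\Im\,\zeta'/\zeta$ rather than $\Re$, a harmless slip, and your truncated piece in fact satisfies the stronger estimate $\bb{E}|\langle J^{(k)},\widetilde{\Z}\rangle|^{2\ell}\ll(B\ell)^{\ell}k^{-2\ell}$). The genuine gap is in your treatment of the remainder $J-J^{(k)}$. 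The claimed pointwise bound $|J-J^{(k)}|(\xi)\ll k^{a}Q(\xi)$ with a \emph{small} fixed exponent $a$ is false: $J^{(k)}$ is $J$ convolved with an approximate identity at scale $k$, hence $J^{(k)}=O(1/k)$ uniformly, and so $|J-J^{(k)}|(\xi)\asymp|J(\xi)|\asymp 1/|\xi|$ throughout the intermediate range $1\ll|\xi|\ll k$; comparing with $k^{a}/\xi^{2}$ there forces $a\ge 1$. The correct envelope is one of width $k$, namely $\ll\min(1,k/\xi^{2})\ll G_k(\xi)$, not a fixed-width function $Q$ multiplied by a small power of $k$. And even granting some exponent $a>0$, the bookkeeping cannot close: since your moment bound is restricted to $2\ell\le k$, reaching $e^{-Cx\log x}$ for the truncated piece forces $\ell\asymp x$ and hence $k\gtrsim x$; your remainder step then requires bounding $\bb{P}\big(\langle Q,\Z\rangle\gtrsim x/k^{a}\big)$, whose threshold is $O(x^{1-a})$ (and $O(1)$ for the true value $a=1$), so Theorem \ref{quad_bound} returns at best $e^{-Cx^{1-a}\log x}$, and nothing useful at all when $a=1$. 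The loss is a power of $x$ in the exponent, not merely a smaller constant $C$.

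The repair is exactly where the paper diverges from your plan: dominate $|J-J^{(k)}|$ by $A\,G_k$ and do \emph{not} route the remainder through the tail bound for $\langle Q,\Z\rangle$ at a reduced threshold. Instead split $\langle G_k,\Z\rangle=\langle G_k,\widetilde{\Z}\rangle+\langle G_k,\Z^{o}\rangle$, and exploit that $\supp\hat{G}_k\subseteq[-1/k,1/k]$ gives centered moments $\bb{E}|\langle G_k,\widetilde{\Z}\rangle|^{2\ell}\le(B_2\ell)^{\ell}$ \emph{uniformly in} $k$ (Lemma \ref{moments}), while the deterministic part satisfies $\langle G_k,\Z^{o}\rangle\le B_3 k$ (Lemma \ref{pointwise_regular}). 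Choosing $k$ a small multiple of $x$, the deterministic term is absorbed into $x/2$, and both random pieces -- the short Dirichlet polynomial and $\langle G_k,\widetilde{\Z}\rangle$ -- are handled by Markov at threshold $\asymp x$ with $\ell\asymp x$, which is what produces the rate $e^{-Cx\log x}$. Your domination by $k\,Q$ discards precisely the band-limited structure that makes the moments of the remainder independent of $k$, and that structure is the crux of the proof.
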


Applying Lemma \ref{J_regular} here, we see likewise:

\begin{cor}[Tail bound for signed counts]
\label{JZ_bound}
For all $x \geq 2$ and all $T \geq 2$,
$$
\bb{P}(\, |\langle J, \Z \rangle| \geq x\,) \ll e^{-C x\log x},
$$
where the constant $C$ and the implicit constant are absolute.
\end{cor}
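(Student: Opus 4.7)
The plan is to derive this corollary as an immediate consequence of Theorem~\ref{J_bound} combined with the pointwise bound in Lemma~\ref{J_regular}. The key observation is that by the definition of $\langle J, \widetilde{\Z}\rangle$ given just before Lemma~\ref{J_regular}, we have the decomposition
$$
\langle J, \Z \rangle = \langle J, \widetilde{\Z} \rangle + \langle J, \Z^o \rangle,
$$
valid for every $t \in [T,2T]$ and every $T \geq 2$, once one verifies that each of the three quantities is individually well-defined (which follows from the discussion surrounding \eqref{J_def}--\eqref{J_fourier} and from Lemma~\ref{J_regular}).

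Next I would use Lemma~\ref{J_regular} to bound $|\langle J, \Z^o_T(t)\rangle|$ uniformly. Since $T \geq 2$, the estimate $\langle J, \Z^o_T(t)\rangle = O(1/\log T)$ yields an absolute constant $C_0$ with $|\langle J, \Z^o_T(t)\rangle| \leq C_0$ for all $t \in [T,2T]$. Therefore, by the triangle inequality, the event $\{|\langle J,\Z\rangle| \geq x\}$ is contained in the event $\{|\langle J,\widetilde{\Z}\rangle| \geq x - C_0\}$, and so
$$
\bb{P}\big(|\langle J, \Z\rangle| \geq x\big) \leq \bb{P}\big(|\langle J, \widetilde{\Z}\rangle| \geq x - C_0\big).
$$

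Finally I would split into two cases. If $x \geq 2C_0 + 2$, then $x - C_0 \geq x/2 \geq 2$, so Theorem~\ref{J_bound} applies with $x/2$ in place of $x$, giving
$$
\bb{P}\big(|\langle J, \Z\rangle| \geq x\big) \ll e^{-C(x/2)\log(x/2)} \ll e^{-C' x \log x}
$$
for a new absolute constant $C' > 0$. For the bounded range $2 \leq x < 2C_0 + 2$, the probability is trivially at most $1$, which is absorbed into the implicit constant by choosing a smaller $C'$ if necessary. No step here presents a real obstacle; the work has all been done in Theorem~\ref{J_bound} and Lemma~\ref{J_regular}, and this corollary is purely a bookkeeping exercise.
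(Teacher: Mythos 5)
Your proposal matches the paper exactly: the paper derives Corollary \ref{JZ_bound} in one line by combining Theorem \ref{J_bound} with the uniform bound $\langle J, \Z^o_T(t)\rangle = O(1/\log T)$ of Lemma \ref{J_regular}, via the decomposition $\langle J, \Z\rangle = \langle J, \widetilde{\Z}\rangle + \langle J, \Z^o\rangle$. Your handling of the shift from $x$ to $x - C_0$ and the bounded range of $x$ is the routine bookkeeping the paper leaves implicit, and it is correct.
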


Our proof of Theorem \ref{J_bound} is similar to the proof of Theorem \ref{quad_bound}. Again we require a series of lemmas, to be proved later.

\begin{lem}
\label{J_bulk}
For an absolute constant $B_1'$, uniformly for $T\geq 2$,
$$
|\langle J, \widetilde{Z}_T(t)\rangle | \leq B_1' \log T, \quad \forall t\in [T,2T].
$$
\end{lem}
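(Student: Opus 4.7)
The plan is to recast $\langle J, \widetilde{\Z}_T(t)\rangle$ as a Stieltjes integral against the bounded function $N(\xi) - M(\xi)$, where $N$ is the zero-counting function extended oddly to $\xi < 0$ by the symmetry of the zeros (so that $\sum_{|\gamma|<V} h(\gamma) = \int_{-V}^V h(\xi)\,dN(\xi)$), and $M(\xi) := \int_0^\xi \Omega(y)/(2\pi)\,dy$ is likewise odd. The key arithmetic input is the unconditional bound
\begin{equation*}
|N(\xi) - M(\xi)| \ll \log(|\xi|+2),
\end{equation*}
which follows from the Riemann--von Mangoldt formula together with Backlund's bound for the argument function $S$.

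First I would rewrite the truncated combination as a single Stieltjes integral,
\begin{equation*}
\sum_{|\gamma|<V} J\Big(\tfrac{\log T}{2\pi}(\gamma - t)\Big) - \int_{-V}^V J\Big(\tfrac{\log T}{2\pi}(\xi-t)\Big)\frac{\Omega(\xi)}{2\pi}\,d\xi = \int_{-V}^V J\Big(\tfrac{\log T}{2\pi}(\xi-t)\Big)\,d\big[N(\xi) - M(\xi)\big],
\end{equation*}
and integrate by parts. Since $|J(x)| \ll 1/(|x|+1)$, the boundary contributions at $\xi = \pm V$ have size $O\big(\log V/(\log T\cdot V)\big)$ and vanish as $V \to \infty$, leaving
\begin{equation*}
\langle J, \widetilde{\Z}_T(t)\rangle = -\int_{-\infty}^\infty \tfrac{\log T}{2\pi}\, J'\Big(\tfrac{\log T}{2\pi}(\xi - t)\Big)\big(N(\xi) - M(\xi)\big)\,d\xi.
\end{equation*}

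Next I would change variables $u = \tfrac{\log T}{2\pi}(\xi - t)$ and combine the bound above with $|J'(u)| \ll 1/(1+u^2)$, reducing matters to estimating
\begin{equation*}
|\langle J, \widetilde{\Z}_T(t)\rangle| \ll \int_{-\infty}^\infty |J'(u)|\log\Big(\Big|t + \tfrac{2\pi u}{\log T}\Big| + 2\Big)\,du.
\end{equation*}
Since $t \leq 2T$, the logarithm is $\ll \log T + \log(|u|+2)$, and both $\int |J'(u)|\,du$ and $\int |J'(u)|\log(|u|+2)\,du$ are finite ($J'$ is bounded and decays like $u^{-2}$). The first piece yields the main contribution $O(\log T)$, and the second contributes only $O(1)$.

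The main obstacle I expect is the justification of the Stieltjes integration by parts and the vanishing of boundary terms, given that the sum and integral in the definition of $\langle J, \widetilde{\Z}\rangle$ do not individually converge absolutely; however, by packaging their difference as a single integral against the bounded function $N-M$, the analysis reduces to standard manipulations on each finite window $[-V,V]$ before passing to the limit.
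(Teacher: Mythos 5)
Your argument is correct, but it takes a genuinely different route from the paper. The paper proves Lemma \ref{J_bulk} by first establishing the identity \eqref{J_zeta}: using the Laplace-transform identity $\int_0^\infty e^{-sx}e^{-x/2}\,d(\psi(e^x)-e^x) = -\tfrac{\zeta'}{\zeta}(\tfrac12+s)-\tfrac{1}{s-1/2}$ (continued to $\Re s>0$ on RH) together with the explicit formula, it writes $\langle J,\widetilde{\Z}_T(t)\rangle = \tfrac{1}{\log T}\Im\tfrac{\zeta'}{\zeta}\big(\tfrac12+\tfrac{1}{\log T}+it\big)+O(1/\log T)$, and then bounds $\zeta'/\zeta$ on that abscissa by $O(\log^2 T)$ via Montgomery--Vaughan's Lemma 12.1 and the unit-interval zero count. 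You instead package the truncated sum-minus-integral as a Stieltjes integral against $N-M$, integrate by parts, and invoke the Riemann--von Mangoldt formula with $S(\xi)\ll\log\xi$; since $M$ is exactly $(\theta(\xi)-\theta(0))/\pi$, the bound $|N-M|\ll\log(|\xi|+2)$ is indeed classical and unconditional, and your estimates $|J'(u)|\ll 1/(1+u^2)$, boundary terms $O(\log V/(V\log T))$, and the split $\log(|t+2\pi u/\log T|+2)\ll\log T+\log(|u|+2)$ all check out (the only details left implicit are routine: take $V\to\infty$ through values avoiding zero ordinates so the endpoint atoms cause no trouble, and note $N-M$ is BV against the $C^1$ integrand). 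What each approach buys: yours is more elementary and in fact unconditional, needing neither RH nor the explicit formula; the paper's route has the side benefit of producing the identity \eqref{J_zeta} relating $\langle J,\widetilde{\Z}\rangle$ to $\Im\,\zeta'/\zeta$, which the paper reuses (the same procedure yields \eqref{log_d_zeta} in the proof of Theorem \ref{log_bound}), so its extra machinery is not wasted in context.
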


For the next two lemmas we define
\begin{equation}
\label{W_def}
W^{(\epsilon)}(x):= \frac{\sgn(x)}{-2i} e^{-|x|} (1-|x|/\epsilon)_+.
\end{equation}

We have defined $W^{(\epsilon)}$ so that $(W^{(1/k)})\,\hat{}$, for $k\geq 1$, plays the role of something like a smooth approximation to the function
\begin{equation}
\label{xi_tails}
J(\xi)\mathbf{1}_{|\xi|\geq k}.
\end{equation}
More exactly, a computation reveals that,
\begin{equation}
\label{W_hat}
(W^{(1/k)})\,\hat{}\,(z)= J(z) + \frac{k}{2i} \Big( \underbrace{\frac{1-\exp\big(\tfrac{1+i2\pi z}{k}\big)}{(1+i2\pi z)^2}}_{K(z),\,\textrm{say}} - \underbrace{\frac{1-\exp\big(\tfrac{1-i2\pi z}{k}\big)}{(1-i2\pi z)^2}}_{K(-z)}\Big).
\end{equation}
(We have written $z$ instead of $\xi$ here, because we will later need this expression for complex values of $z$ as well.) We will see from a Taylor expansion, $(W^{(1/k)})\,\hat{}\,(\xi)$ is small when $|\xi| \leq k$, and the terms $K(\xi)$ may be thought of as an error term when $|\xi|$ is large. A more exact statement of this is as follows: 

\begin{lem}
\label{J_approx}
For all $k\geq 1$,
$$
|J(\xi) - (W^{(1/k)})\,\hat{}\,(\xi)| \leq A\, G_k(\xi), \quad \forall \xi \in \bb{R},
$$
where $A$ is an absolute constant.
\end{lem}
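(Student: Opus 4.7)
The plan rests on the explicit formula for $(W^{(1/k)})\,\hat{}\,(\xi)$ supplied by \eqref{W_hat}, which I would rewrite as
\[
J(\xi) - (W^{(1/k)})\,\hat{}\,(\xi) = -\frac{k}{2i}\bigl[K(\xi) - K(-\xi)\bigr],
\]
together with the uniform comparison $G_k(\xi) \gg k^2/(k^2 + \xi^2)$ valid for all $\xi \in \bb{R}$ and $k \geq 1$. This comparison reduces to the elementary bound $G(y) \gg 1/(1+y^2)$, which in turn is immediate from the fact that the two sinc-squared kernels in \eqref{G_def} never vanish simultaneously (indeed $\sin^2\pi(y+1/4) + \sin^2\pi(y-1/4) \equiv 1$) combined with the quadratic decay of their denominators. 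With the identity and comparison in hand, I would split the analysis into two regions according to the size of $|\xi|$ relative to $k$.

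In the region $|\xi| \leq k$ one has $G_k(\xi) \gg 1$, so it suffices to bound $|J(\xi) - (W^{(1/k)})\,\hat{}\,(\xi)|$ by an absolute constant. I would do this trivially: $|J(\xi)| \leq 1/2$ throughout $\bb{R}$, and $|(W^{(1/k)})\,\hat{}\,(\xi)| \leq \|W^{(1/k)}\|_{L^1} \leq 1/k \leq 1$ directly from \eqref{W_def} (support in $[-1/k,1/k]$ with pointwise size at most $1/2$).

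In the region $|\xi| > k$ one has $G_k(\xi) \gg k^2/\xi^2$, and the goal is to show $|J(\xi) - (W^{(1/k)})\,\hat{}\,(\xi)| \ll k^2/\xi^2$. Using the identity above, this reduces to bounding each $K$-term separately, which I would do with a crude estimate: for $k \geq 1$ one has $|1 - \exp((1+i2\pi\xi)/k)| \leq 1 + e^{1/k} \leq 1+e$, hence $|K(\xi)| \ll 1/(1+\xi^2)$ and likewise for $K(-\xi)$. Feeding these into the identity yields
\[
|J(\xi) - (W^{(1/k)})\,\hat{}\,(\xi)| \leq \frac{k}{2}\bigl(|K(\xi)| + |K(-\xi)|\bigr) \ll k/\xi^2 \leq k^2/\xi^2,
\]
the final inequality using $k \geq 1$.

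I do not anticipate any real obstacle. Each region needs only a crude pointwise estimate, and the two regions meet cleanly at the natural scale $|\xi| \sim k$ because $G_k$ transitions there from being bounded below to decaying like $k^2/\xi^2$. A sharper approach via Taylor expanding the exponentials in \eqref{W_hat} would reveal the difference is actually $O(1/k)$ on the inner region, but this refinement is not needed for the stated estimate.
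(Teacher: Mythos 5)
Your proposal is correct and follows essentially the same route as the paper: split at $|\xi| \sim k$, bound the difference by an absolute constant on the inner region and by $O(k/\xi^2)$ on the outer region via the $K$-terms in \eqref{W_hat}, and compare with the lower bound $G_k(\xi) \geq Q(\xi/k)$ (which is immediate from \eqref{G_dom}). The only micro-difference is that on the inner region you use the trivial $L^1$ bound $|(W^{(1/k)})\,\hat{}\,(\xi)| \leq \|W^{(1/k)}\|_{L^1} \leq 1/k$ where the paper Taylor-expands the exponential in \eqref{W_hat}; both give the same estimate.
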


We also have the moments of $(W^{(1/k)})\,\hat{}$ are very small when $k$ is large.

\begin{lem}
\label{J_moments}
For an absolute constant $B_2'$, uniformly for $T\geq 2$ and $2\ell \leq k \leq \sqrt{T}$, we have 
$$
\bb{E} |\langle (W^{(1/k)})\,\hat{}\,, \widetilde{\Z}\rangle|^{2\ell} \leq (B_2' \ell)^\ell k^{-2\ell}.
$$
\end{lem}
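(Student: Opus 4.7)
The plan is to use Theorem~\ref{explicit} to rewrite $\langle (W^{(1/k)})\hat{}, \widetilde{\Z}\rangle$ as a short Dirichlet polynomial in $t$, and then estimate its $2\ell$-th moment via orthogonality of $t \mapsto n^{it}$. Since $W^{(1/k)}$ is odd, piecewise continuous, and compactly supported in $[-1/k,1/k]$, the choice $g(x) := \tfrac{1}{\log T} W^{(1/k)}(x/\log T) e^{itx}$ satisfies $\hat{g}(\gamma/2\pi) = (W^{(1/k)})\hat{}\big(\tfrac{\log T}{2\pi}(\gamma-t)\big)$. Applying the explicit formula, and using both the oddness of $W^{(1/k)}$ and its explicit form $W^{(1/k)}(y) = -\tfrac{1}{2i}\sgn(y) e^{-|y|}(1-k|y|)_+$, one obtains
\[
\langle (W^{(1/k)})\hat{}, \widetilde{\Z}\rangle = \sum_{n \leq T^{1/k}} a_n \sin(t \log n) + R(t),
\]
where $a_n := \tfrac{\Lambda(n) n^{-1/\log T}}{\sqrt{n}\, \log T}\big(1 - \tfrac{k\log n}{\log T}\big)_+$, and $R(t) = \int_0^\infty (g(\log\tau)+g(-\log\tau))/\sqrt{\tau}\, d\tau$. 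An integration by parts against the oscillation $\sin(t \log\tau)$ shows that $R(t)$ is polynomially small in $T$, hence negligible at the level of any polynomial moment.

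Next, I would expand $\big|\sum_n a_n \sin(t\log n)\big|^{2\ell}$ using $\sin \theta = (e^{i\theta} - e^{-i\theta})/(2i)$, producing a $2\ell$-fold sum over indices $n_1,\ldots,n_{2\ell} \leq T^{1/k}$ and signs $\epsilon_j \in \{\pm 1\}$ of oscillatory factors $N^{it}$ with $N := \prod n_j^{\epsilon_j}$. Averaging over $t \in [T, 2T]$ via $\tfrac{1}{T}\int_T^{2T} N^{it}\, dt = \mathbf{1}_{N = 1} + O(1/T|\log N|)$, the off-diagonal contribution ($N \neq 1$) is controlled because any such $N$ satisfies $|N|^{\pm 1} \leq T^{2\ell/k} \leq T$ and $|\log N| \gg T^{-2\ell/k}$, precisely under the hypothesis $2\ell \leq k$. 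The diagonal contribution ($N = 1$) is dominated by pair matchings of primes between the two halves of the tuple, contributing a combinatorial factor $\tfrac{(2\ell)!}{4^\ell \ell!} \leq (e\ell)^\ell$ times $\big(\sum_n a_n^2\big)^\ell$; non-pair matchings (multiple primes glued into one block, or genuine prime powers $p^j$ with $j \geq 2$) are lower order and subsumed by the same bound.

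Finally, by Mertens' theorem $\sum_{p \leq X} (\log p)^2/p = \tfrac{1}{2}\log^2 X + O(\log X)$, one computes
\[
\sum_n a_n^2 \leq \frac{1}{\log^2 T} \sum_{p \leq T^{1/k}} \frac{(\log p)^2}{p} + O\Big(\frac{1}{\log T}\Big) = O\Big(\frac{1}{k^2}\Big),
\]
where the error term absorbs contributions from higher prime powers. Combining these steps, $\bb{E}|\langle (W^{(1/k)})\hat{}, \widetilde{\Z}\rangle|^{2\ell} \ll (e\ell)^\ell \cdot (C/k^2)^\ell = (B_2' \ell)^\ell k^{-2\ell}$ for an absolute $B_2'$. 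The chief obstacle is the careful accounting of off-diagonal and non-pair-matching contributions so that they remain dominated by the main diagonal estimate; the constraints $2\ell \leq k \leq \sqrt{T}$ in the statement are precisely what is needed to make this accounting go through.
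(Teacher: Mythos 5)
Your skeleton is the same as the paper's: apply the explicit formula to turn $\langle (W^{(1/k)})\,\hat{}\,,\widetilde{\Z}\rangle$ into a Dirichlet polynomial over prime powers of length $T^{1/k}$ plus an archimedean/pole remainder, estimate the $2\ell$-th moment of the polynomial, and finish with Chebyshev--Mertens. The paper, however, first splits off the prime powers $r\geq 2$ by Minkowski and then invokes the Selberg--Soundararajan moment lemma \cite[Lem. 3]{So}, whose proof rests on the Montgomery--Vaughan mean value theorem; your proposal replaces that input by a hand-made diagonal/off-diagonal expansion, and this is where it breaks. Bounding each off-diagonal term by $\prod_j |a_{n_j}|\cdot O\big(1/(T|\log N|)\big)$ with $|\log N|\gg T^{-2\ell/k}$ gives at best a total of $T^{2\ell/k-1}\big(\sum_n |a_n|\big)^{2\ell}$, and since $\sum_n |a_n| \asymp T^{1/2k}/\log T$ this is of size $T^{3\ell/k-1}/\log^{2\ell}T$. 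That is not $\ll (B\ell)^\ell k^{-2\ell}$ once $\ell > k/3$; for fixed $k$ with $2\ell = k$ it even grows like a positive power of $T$. So the assertion that the hypothesis $2\ell\leq k$ is ``precisely what is needed'' for your accounting is wrong, and the regime you lose is exactly the one the paper uses (Theorem \ref{J_bound} takes the $k$-th moment, i.e.\ $2\ell=k$). The fix is to treat the off-diagonal via the Montgomery--Vaughan mean value theorem applied to the $\ell$-th power of the polynomial (which also disposes rigorously of your unproved claim that non-pair matchings and prime-power coincidences are ``lower order''); but at that point you are essentially reproving the cited lemma.

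Two further points of uniformity. The remainder $R(t)$ cannot be dismissed as ``polynomially small in $T$, hence negligible'': the target $(B_2'\ell)^\ell k^{-2\ell}$ is itself as small as $T^{-\ell}$ when $k\asymp\sqrt{T}$, so what is needed (after Minkowski) is a bound of the shape $|R(t)|\ll \sqrt{\ell}/k$, e.g.\ $\ll 1/k$, uniformly in $2\leq k\leq\sqrt{T}$. Such a bound is true and is exactly the paper's estimate $\mathcal{I}'\ll 1/k$, but it is precisely here that the hypothesis $k\leq\sqrt{T}$ enters --- a hypothesis your argument never invokes, which is a sign the integration by parts has not been carried out to the required accuracy. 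Finally, the $O(1/\log T)$ you allow in $\sum_n a_n^2$ is not $O(1/k^2)$ when $k\gg \log T$; one should note that the polynomial is empty for $k>\log T/\log 2$ and that prime powers contribute $O(1/\log^2 T)$, after which $\sum_n a_n^2\ll 1/k^2$ does hold uniformly.
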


As before, we momentarily delay the proof of these lemmas. Assuming them, we see that a proof of Theorem \ref{J_bound}, the tail bound for oscillatory counts, proceeds in the same manner as that of Theorem \ref{quad_bound}, the tail bound for quadratically decaying counts.

\begin{proof}[Proof of Theorem \ref{J_bound}]
If $x > B_1' \log T$, then by Lemma \ref{J_bulk},
$$
\bb{P}(|\langle J, \widetilde{\Z}\rangle| \geq x) = 0.
$$
So as before we may treat the case that $x \leq B_1' \log T$. By applying Lemma \ref{J_approx}, for all $k\geq 1$,
$$
\langle J, \widetilde{\Z}\rangle = \langle (W^{(1/k)})\,\hat{}\,,\widetilde{\Z}\rangle + O(|\langle G_k, \widetilde{\Z}\rangle |) + O(\langle G_k, \Z^o\rangle ),
$$
where the implicit constant in the first error term may be taken as $A$, and the implicit constant in the second $2A$. As long as $k\leq \sqrt{T}$, Lemma \ref{pointwise_regular} allows us to bound the second of these error terms: $\langle G_k, \Z^o\rangle \leq B_3\,k.$ Hence using a union bound,
$$
\bb{P}(\langle J, \widetilde{\Z}\rangle | \geq x) \leq \bb{P}\big(|\langle (W^{(1/k)})\,\hat{}\,,\widetilde{\Z}\rangle| \geq x/2\big) + \bb{P}\big(A|\langle G_k, \widetilde{\Z} \rangle| + AB_3\,k \geq x/2\big).
$$
A choice of $k$ and bound for both probabilities then proceeds as in the proof of Theorem \ref{quad_bound}, replacing Lemma \ref{moments} by Lemma \ref{J_moments} to bound the first of these terms.
\end{proof}

There is one last result of this sort that we will use below.

\begin{lem}[Bound on $1/\xi$ stubs]
\label{W_markov}
For any $\epsilon\geq 0$ and $k\geq 1/\epsilon^2$, for $T = T(\epsilon)$ sufficiently large,
$$
\bb{P}(|\langle (W^{(1/k)})\,\hat{}\,,\Z_T\rangle| \geq \epsilon) \ll \epsilon^2.
$$
\end{lem}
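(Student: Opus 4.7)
The plan is to split
\begin{equation*}
\langle (W^{(1/k)})\,\hat{}\,, \Z_T\rangle = \langle (W^{(1/k)})\,\hat{}\,, \widetilde{\Z}_T\rangle + \langle (W^{(1/k)})\,\hat{}\,, \Z^o_T\rangle,
\end{equation*}
bound each piece by $\epsilon/2$ (probabilistically for the first, pointwise deterministically for the second), and conclude by a union bound.

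For the $\widetilde{\Z}$ piece, I would apply Lemma~\ref{J_moments} at $\ell = 1$ (which is valid so long as $k \geq 2$, a consequence of $k \geq 1/\epsilon^2$ for $\epsilon$ small) to obtain $\bb{E}|\langle (W^{(1/k)})\,\hat{}\,, \widetilde{\Z}_T\rangle|^2 \leq B_2'/k^2$. Markov's inequality together with $k \geq 1/\epsilon^2$ then gives
\begin{equation*}
\bb{P}\bigl(|\langle (W^{(1/k)})\,\hat{}\,, \widetilde{\Z}_T\rangle| \geq \epsilon/2\bigr) \leq \frac{4 B_2'}{\epsilon^2 k^2} \leq 4 B_2' \epsilon^2,
\end{equation*}
so this piece is already of the required size.

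For the $\Z^o$ piece, I would show the pointwise bound $|\langle (W^{(1/k)})\,\hat{}\,, \Z^o_T(t)\rangle| = O(1/\log T)$, uniformly for $t \in [T,2T]$ and for $k$ in the relevant range, so that it is at most $\epsilon/2$ once $T$ is sufficiently large in $\epsilon$. After the change of variable $u = \tfrac{\log T}{2\pi}(\xi - t)$ and using that $(W^{(1/k)})\,\hat{}\,$ is odd,
\begin{equation*}
\langle (W^{(1/k)})\,\hat{}\,, \Z^o_T(t)\rangle = \frac{1}{\log T}\int_0^\infty (W^{(1/k)})\,\hat{}\,(u) \bigl[\Omega\bigl(t+\tfrac{2\pi u}{\log T}\bigr) - \Omega\bigl(t-\tfrac{2\pi u}{\log T}\bigr)\bigr]\,du.
\end{equation*}
Stirling's formula \eqref{Stirlings} yields $|\Omega(t+v)-\Omega(t-v)| \ll v/t + 1/t$ for $v \leq t/2$ and $\ll t/v$ for $v \geq 2t$ (with an $O(\log t)$ bound in between), while Lemma~\ref{J_approx} combined with $|J(u)| \ll \min(1,1/|u|)$ yields $|(W^{(1/k)})\,\hat{}\,(u)| = O(1)$ on $|u| \leq k$ and $\ll 1/|u|$ on $|u|\geq 1$. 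Splitting the integral at the natural scales $u = k$ and $u = t\log T$ and estimating each range in the manner of the proof of Lemma~\ref{J_regular} produces the asserted $O(1/\log T)$ bound, uniform in $t$ and in $k \leq \sqrt{T}$.

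The only genuine obstacle is this $\Z^o$ estimate. A crude comparison of $(W^{(1/k)})\,\hat{}\,$ to $J$ via Lemma~\ref{J_approx} combined with the bulk bound $\langle G_k, \Z^o_T\rangle \leq B_3 k$ of Lemma~\ref{pointwise_regular} would only give $\langle (W^{(1/k)})\,\hat{}\,, \Z^o_T\rangle = \langle J, \Z^o_T\rangle + O(k)$, which is useless. It is essential to exploit oddness of $(W^{(1/k)})\,\hat{}\,$ in order to convert the weight against $\Z^o$ into the antisymmetric difference $\Omega(t+v) - \Omega(t-v)$; the smoothness of $\Omega$ from \eqref{Stirlings} then makes this difference much smaller than $\Omega$ itself, which is what supplies the required $O(1/\log T)$.
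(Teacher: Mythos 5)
Your proposal is correct and is essentially the paper's own argument: the same splitting into $\langle (W^{(1/k)})\,\hat{}\,,\widetilde{\Z}\rangle$ plus $\langle (W^{(1/k)})\,\hat{}\,,\Z^o\rangle$, the same use of Lemma \ref{J_moments} at $\ell=1$ with Chebyshev for the first piece, and the same exploitation of oddness to reduce the second piece to the antisymmetric difference of $\Omega$, plus the trivial disposal of large $\epsilon$. The one difference is that instead of redoing a Lemma \ref{J_regular}-type integral for $(W^{(1/k)})\,\hat{}\,$, the paper notes $\Omega(t+v)-\Omega(t-v)\geq 0$ for $v\geq 0$ together with the pointwise domination $(W^{(1/k)})\,\hat{}\,(\xi)\ll J(\xi)$ on $[0,\infty)$ (from \eqref{W_bound_small}--\eqref{W_bound_large}), so that $|\langle (W^{(1/k)})\,\hat{}\,,\Z^o\rangle|\ll \langle J,\Z^o\rangle \ll 1/\log T$ follows at once from Lemma \ref{J_regular}; this shortcut is worth adopting, since in your direct estimate the uniform $O(\log t)$ bound on the middle range $v\asymp t$ is too lossy by itself (it yields only $O(1)$ there) and one must instead integrate the logarithmic singularity near $v=t$, exactly as in Lemma \ref{J_regular}.
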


By the approximation \eqref{xi_tails}, this roughly corresponds to a statement that when $k$ is large $$\sum_{\Big|\frac{\log T}{2\pi}(\gamma-t)\Big| \geq k} \frac{1}{\tfrac{\log T}{2\pi}(\gamma-t)}$$ is typically very small. This is a result, in part, of cancellation between the two `sides' of the sum. 

Finally, at the end of this section, we explain how it is that Theorem \ref{J_bound} implies Theorem \ref{log_bound}.

\subsection{}
We finally turn to proofs of the lemmas above.

\begin{proof}[Proof of Lemma \ref{bulk}]
We recall the estimate (see \cite[Cor. 14.3]{MoVa}),
$$
N(t+1)-N(t) \ll \log(|t|+2),\quad \forall t\in\bb{R}.
$$
By inspection, it is easy to verify that $\log(|u+v|+2) \ll \log(|u|+2) + \log(|v|+2)$ for all $u,v\in\bb{R}$. 

Now note that for $t\in[T,2T]$,
\begin{align*}
\langle Q, \Z_T(t) \rangle &\ll \sum_{k=-\infty}^\infty [N(t+k+1)-N(t+k)]\cdot \frac{1}{1+ k^2 \log^2 T} \\
&\ll \log(T+2)\sum_{k=-\infty}^\infty \frac{1}{1+k^2\log^2T} + \sum_{k=-\infty}^\infty \frac{1}{1+k^2 \log^2 T} \log(|k|+2) \\
&\ll \log(T).
\end{align*}
\end{proof}

\begin{proof}[Proof of Lemma \ref{moments}]
This is a more or less standard computation of moments. However, some added care is necessary since an estimate is required that is uniform as moments vary. We note that from the explicit formula,
\begin{align*}
\langle G_k, \widetilde{\Z}_T(t) \rangle =& \frac{k}{\log T} \int_{-\infty}^\infty \hat{G}\Big(\frac{kx}{\log T}\Big)(e^{-ixt} + e^{ixt}) e^{-x/2}d\big(e^x - \psi(e^x)\big) \\
=&  \mathcal{I} - 2\Re \frac{k}{\log T} \underbrace{\sum_{r\geq 1} \sum_p \hat{G}\Big(\frac{kr}{\log T}\log p\Big) \frac{\log p}{p^{r(1/2-it)}}}_{\sum_r s_r,\; \textrm{say}},
\end{align*}
where
$$
\mathcal{I}:= G\Big(\frac{\log T}{2\pi k}(i/2 - t)\Big) + G\Big(\frac{\log T}{2\pi k}(i/2 + t)\Big).
$$
Here $G(x+iy)$ is, of course, the analytic continuation of the function defined before in \eqref{G_def}. One may check that
$$
G\Big(\frac{\log T}{2\pi k}(i/2 \pm t)\Big) \ll \frac{\exp(\log T/4k)}{t^2} \ll \frac{1}{T^{7/4}} \ll 1,
$$
for $k\geq 1$ and $t\in [T,2T]$. 

Hence from H\"{o}lder's inequality,
\begin{equation}
\label{holder}
\Big(\bb{E} \langle G_k, \widetilde{\Z} \rangle^{2\ell} \Big)^{1/2\ell} \ll 1 + \frac{k}{\log T}\sum_{r\geq 1} \Big(\bb{E} |s_r|^{2\ell}\Big)^{1/2\ell}.
\end{equation}

Because $\supp \hat{G}\subseteq [-1,1]$, a standard argument dating back to Selberg (see \cite[Lem. 3]{So}, for a modern treatment that applies directly) reveals\footnote{In fact, the argument shows that up to twice our range of $\ell$ may be admitted.} that for $2\ell \leq k$ 
$$
\bb{E} |s_r|^{2\ell} \ll \ell!\cdot \bigg(\sum_p \frac{\log^2 p}{p^r} \hat{G}\Big(\frac{k r}{\log T}\log p\Big)^2\bigg)^\ell.
$$
By the support of $\hat{G}$, this quantity is null for all $r \geq 1$, when $k > \log T/\log 2$. 

In the case that $k \leq \log T/\log 2$ we need a little more work. When $r=1$,
$$
\sum_p \frac{\log^2 p}{p} \hat{G}\Big(\frac{kr}{\log T}\log p\Big)^2 \ll  \sum_{p \leq T^{1/k}} \frac{\log^2 p}{p} \ll \Big(\frac{\log T}{k}\Big)^2,
$$
by Chebyshev (see \cite[Ch. 2.2]{MoVa}). When $r\geq 2$,
$$
\sum_p \frac{\log^2 p}{p^r} \hat{G}\Big(\frac{kr}{\log T}\log p\Big)^2 \ll \int_2^\infty \frac{\log^2 t}{t^r}\,dt \ll \frac{1}{2^r}.
$$
Returning to \eqref{holder}, we see that
\begin{align*}
\Big(\bb{E} \langle G_k, \widetilde{\Z} \rangle^{2\ell} \Big)^{1/2\ell} &\ll 1+ \frac{k}{\log T} (\ell!)^{1/2\ell} \Big(\frac{\log T}{k} + \sum_{r\geq 2} \frac{1}{2^{r/2}}\Big)\\
&\ll \ell^{1/2},
\end{align*}
as $k\leq \log T/\log 2$. We have used Stirling's formula \cite[Th. 1.4.1]{AnAsRo} to bound the factorial. Exponentiating by $2\ell$ gives the lemma.
\end{proof}

\begin{proof}[Proof of Lemma \ref{pointwise_regular}]
We have
\begin{align*}
\langle G, \Z^o_T(t)\rangle =& \frac{1}{\log T} \int_{-\infty}^\infty G_k(y) \Omega\Big(t + \frac{2\pi y}{\log T}\Big) \, dy \\
=& \frac{1}{\log T}\bigg(\int_{|y|\leq T \log T} + \int_{|y| > T \log T}\bigg) G_k(y) \Omega\Big(t + \frac{2\pi y}{\log T}\Big) \, dy.
\end{align*}
By our application of Stirling's formula \eqref{Stirlings}, this quantity is
\begin{align*}
\ll& \frac{1}{\log T}\bigg(\int_{|y|\leq T\log T} G_k(y) \log T\, dy + \int_{|y| > T\log T} \frac{k^2}{y^2} \log y\, dy \bigg) \\
\ll& k + k^2/T,
\end{align*}
which yields the estimate.
\end{proof}

\begin{proof}[Proof of Lemma \ref{J_bulk}]
It is easy to verify for $\Re s > 1/2$ that
$$
\int_0^\infty e^{-sx} e^{-x/2} d(\psi(e^x)-e^x) = - \frac{\zeta'}{\zeta}\Big(\frac{1}{2}+s\Big)-\frac{1}{s-1/2}.
$$
On RH, by analytic continuation, this identity remains true for $\Re s > 0$. Making use of this identity, the Fourier transform expression \eqref{J_fourier}, and the explicit formula, one may thus verify that
\begin{align}
\label{J_zeta}
\notag \langle J, \widetilde{\Z}_T(t) \rangle =& \frac{1}{\log T} \Im \frac{\zeta'}{\zeta}\Big(\frac{1}{2}+\frac{1}{\log T} + it\Big) \\
\notag &+ \frac{1}{\log T}\Big( \frac{t}{\big(\tfrac{1}{2}-\tfrac{1}{\log T}\big)^2 + t^2} - \frac{t}{\big(\tfrac{1}{2}+\tfrac{1}{\log T}\big)^2 + t^2}\Big) \\
=& \frac{1}{\log T} \Im \frac{\zeta'}{\zeta}\Big(\frac{1}{2}+\frac{1}{\log T} + it\Big) + O\Big(\frac{1}{\log T}\Big).
\end{align}
From Lemma 12.1 of \cite{MoVa}, we see that for $t\in[T,2T]$,
\begin{align*}
\frac{\zeta'}{\zeta}\Big(\frac{1}{2}+\frac{1}{\log T} + it\Big) =& O(1) + \sum_{|\gamma-t| \leq 1} \frac{1}{1/\log T - i(\gamma-t)} \\
=& O(1) + O(\log^2 T) \\
=& O(\log ^2 T),
\end{align*}
with the second to last line following from the fact that $N(t+1)-N(t) = O(\log(|t|+2)$. Combining this estimate with \eqref{J_zeta} yields the lemma.
\end{proof}

\begin{proof}[Proof of Lemma \ref{J_approx}]
A Taylor expansion of the exponential function in \eqref{W_hat} shows that for $|\xi|\leq k$ (throughout this proof, $\xi$ is real),
\begin{equation}
\label{W_bound_small}
(W^{(1/k)})\,\hat{}\,(\xi) \ll 1/k.
\end{equation}
In the same range, plainly $J(\xi) \ll 1$. Hence, for $|\xi| \leq k$,
$$
|(W^{(1/k)})\,\hat{}\,(\xi) - J(\xi) | \ll 1 \ll G_k(\xi).
$$
On the other hand, for $|\xi| > k$, by \eqref{W_hat},
\begin{equation}
\label{W_bound_large}
|(W^{(1/k)})\,\hat{}\,(\xi) - J(\xi) | \ll \frac{k}{\xi^2} \ll \frac{k^2}{\xi^2} \ll G_k(\xi).
\end{equation}
\end{proof}

\begin{proof}[Proof of Lemma \ref{J_moments}]
Our proof proceeds along the same lines as that of Lemma \ref{moments}. From the explicit formula,
$$
\langle (W^{(1/k)})\,\hat{}\,, \widetilde{\Z}_T(t)\rangle = \mathcal{I}' + \Im \frac{1}{\log T} \underbrace{\sum_{r\geq 1} \sum_p \frac{\log p}{p^{r(1/2 + 1/\log T -it)}}\Big(1- k \frac{r \log p}{\log T}\Big)_+ }_{\sum_r \sigma_r,\; \textrm{say}},
$$
where 
$$
\mathcal{I}':= (W^{(1/k)})\,\hat{}\,\Big(\frac{\log T}{2\pi}(i/2-t)\Big) + (W^{(1/k)})\,\hat{}\,\Big(\frac{\log T}{2\pi}(-i/2-t)\Big).
$$
To bound $\mathcal{I}'$, we recall \eqref{W_hat}. It is simple to verify that for $t\in[T,2T]$,
$$
J\Big(\frac{\log T}{2\pi}(\pm i/2-t)\Big) \ll \frac{1}{T\log T}\ll \frac{1}{k}
$$
in the range that $k \leq \sqrt{T}$. On the other hand, a bit more tediously,
$$
K\Big(\pm\frac{\log T}{2\pi}(\pm i/2-t)\Big) \ll \frac{k}{(1-\log T/2)^2+t^2}\exp\Big(\frac{\log T}{2k}\Big) \ll \frac{k}{T^{7/4}} \ll \frac{1}{k},
$$
again for $k \leq \sqrt{T}$. This shows that
$$
\mathcal{I}' \ll \frac{1}{k}.
$$
Thus, as in the H\"older inequality \eqref{holder} of the proof of Lemma \ref{moments},
\begin{equation}
\label{W_holder}
\Big(\bb{E} \langle (W^{(1/k)})\,\hat{}\,, \widetilde{\Z} \rangle^{2\ell} \Big)^{1/2\ell} \ll \frac{1}{k} + \frac{1}{\log T}\sum_{r\geq 1} \Big(\bb{E} |\sigma_r|^{2\ell}\Big)^{1/2\ell}.
\end{equation}
But also as in that proof, for $k > \log T/\log 2$,
$$
\sigma_r = 0, \quad \forall r\geq 1.
$$
Otherwise, for $k \leq \log T/\log 2$, the right hand side of \eqref{W_holder} is likewise bound by
\begin{align*}
\ll& \frac{1}{k} + \frac{1}{\log T}(\ell!)^{1/2\ell} \Big(\frac{\log T}{k} + \sum_{r\geq 2} \frac{1}{2^{r/2}}\Big) \\
\ll& \ell^{1/2}/k.
\end{align*}
This proves the lemma.
\end{proof}

\begin{proof}[Proof of Lemma \ref{W_markov}]
We begin by considering the case that $\epsilon > 1/2$. In this case, the lemma is tautological:
$$
\bb{P}\big(|\langle (W^{(1/k)})\,\hat{}\,,\Z \rangle | \geq \epsilon\big) \leq 1 \ll \epsilon^2.
$$

We may therefore suppose $\epsilon\in(0,1/2)$. From Lemma \ref{J_moments}, we see (noting that this condition on $\epsilon$ imposes $k\geq 2$),
\begin{align*}
\bb{P}\big(|\langle (W^{(1/k)})\,\hat{}\,,\widetilde{\Z} \rangle | \geq \epsilon\big) \leq & \frac{1}{\epsilon^2}\bb{E}\big(|\langle (W^{(1/k)})\,\hat{}\,,\widetilde{\Z} \rangle |^2\big) \\
\ll& \frac{1}{(\epsilon k)^2} \leq \epsilon^2.
\end{align*}

On the other hand, from \eqref{W_bound_small} and \eqref{W_bound_large}, for all $k\geq 1$, we have 
\begin{equation}
\label{W_ll_J}
(W^{(1/k)})\,\hat{}\,(\xi) \ll J(\xi),\quad \forall\xi\geq 0.
\end{equation}
Hence, using the symmetry of $(W^{(1/k)})\,\hat{}$ in the first line below,
\begin{align*}
\langle (W^{(1/k)})\,\hat{}\,, \Z^o_T(t)\rangle =& \lim_{V\rightarrow\infty} \int_0^V (W^{(1/k)})\,\hat{}\,(y) \Big[\Omega\Big(t+ \frac{2\pi y}{\log T}\Big) - \Omega\Big(t-\frac{2\pi y}{\log T}\Big)\Big]\,\frac{dy}{\log T} \\
\ll& \langle J, \Z^o_T(t)\rangle
\ll \frac{1}{\log T}.
\end{align*}
We are justified in applying the bound \eqref{W_ll_J} in passing to the second line because, as in the proof of Lemma \ref{J_regular}, $\Omega(t+ 2\pi y/\log T) - \Omega(t-2\pi y/\log T)\geq 0$ for all $y\geq 0$.

Thus for sufficiently large $T$ (such that $1/\log T$ is small in comparison to $\epsilon$), 
$$
\bb{P}\big(|\langle (W^{(1/k)})\,\hat{}\,,\Z \rangle | \geq \epsilon\big) \leq \bb{P}\big(|\langle (W^{(1/k)})\,\hat{}\,,\widetilde{\Z} \rangle | \geq \epsilon/2 \big)  \ll \epsilon^2,
$$
as claimed.
\end{proof}

\subsection{}
Finally, we turn to bounding the logarithmic derivative of the zeta function. Some computational details in the proof are left to the reader.

\begin{proof}[Proof of Theorem \ref{log_bound}]
We note that by much the same procedure as in \eqref{J_zeta}, we have
\begin{equation}
\label{log_d_zeta}
\frac{1}{\log T} \frac{\zeta'}{\zeta}\Big(\frac{1}{2}+\frac{\alpha}{\log T} + it\Big) = \langle I_\alpha, \widetilde{\Z}_T(t) \rangle + O_\alpha\Big(\frac{1}{\log T}\Big),
\end{equation}
for $t\in [T,2T]$, where
$$
I_\alpha(\xi):= \frac{1}{\alpha - i 2\pi \xi}.
$$
Because $I_\alpha(\xi) = J(\xi) + O_\alpha(Q(\xi))$, the claim follows directly from Theorems \ref{quad_bound} and \ref{J_bound}.
\end{proof}

\textbf{Remark:} An alternative approach to the identity \eqref{log_d_zeta} is to take as a starting point the classical formula \cite[Corollary 10.14]{MoVa},
$$
\frac{\zeta'}{\zeta}(s) = \frac{-1}{s-1} + \sum_\rho \Big(\frac{1}{s-\rho} + \frac{1}{\rho}\Big) - \frac{1}{2}\log(|t|+2) + O(1),
$$
where $s = \sigma+it$. This is less exact algebraically, but expresses the same idea.

\section{Ratio bounds}
\label{3}
With the bounds of Theorems \ref{quad_bound} and \ref{J_bound} in place, it is a simple matter to bound moments of ratios of the zeta function.

\begin{proof}[Proof of Theorem \ref{ratio_bound}]
In this proof we assume $\Re\,\beta \neq 0$ throughout. Using the Hadamard product representation for the zeta function \cite[Th. 10.12]{MoVa} and Stirling's formula for the Gamma function \cite[Cor. 1.4.3]{AnAsRo}, it is straightforward (though a little tedious) to verify that for fixed $\alpha, \beta \in \bb{C}$,
\begin{equation}
\label{hadamard}
\frac{\zeta\Big(\frac{1}{2} + \frac{\alpha}{\log T} + it\Big)}{\zeta\Big(\frac{1}{2}+ \frac{\beta}{\log T} + it\Big)} = (1+o(1)) e^{-(\alpha-\beta)/2} \lim_{V\rightarrow\infty} \prod_{|\gamma|\leq V} \frac{\frac{\alpha}{2\pi} - i \frac{\log T}{2\pi}(\gamma-t)}{\frac{\beta}{2\pi} - i \frac{\log T}{2\pi}(\gamma-t)},
\end{equation}
where because $\Re \beta \neq 0$ the product converges to a finite number (on RH). Here $o(1)$ is a quantity that tends to $0$ uniformly for $t\in[T,2T]$ as $T\rightarrow\infty$.

Using
$$
\mathrm{Log}(z):= \log|z| + i\mathrm{Arg}(z),
$$
with $\mathrm{Arg}(z) \in (-\pi, \pi]$ for all $z\in \bb{C}$, and defining
$$
L_{\alpha,\beta}(\xi):=\mathrm{Log}\Big(\frac{\frac{\alpha}{2\pi}-i\xi}{\frac{\beta}{2\pi}-i\xi}\Big),
$$
one sees that the expression \eqref{hadamard} is equal to
\begin{equation}
\label{ratio_linear_stat}
(1+o(1))e^{-(\alpha-\beta)/2} \exp\big(\langle L_{\alpha,\beta}\,, \Z_T(t) \rangle \big)
\end{equation}
(A little care must be taken, of course, whenever taking the logarithm of a complex number, but here, due to the exponential, no problems arise. One must check that the sum defining $\langle L_{\alpha,\beta}\,,\Z_T(t)\rangle$ converges, but this is straightforward using the symmetry of $\gamma$.)

Note that for $|\xi| > \max(2\pi|\alpha|, 2\pi|\beta|)$
$$
L_{\alpha,\beta}(\xi) = \mathrm{Log}\Big(\frac{1-\frac{\alpha}{i2\pi\xi}}{1-\frac{\beta}{i2\pi\xi}}\Big) = i(\alpha-\beta)J(\xi) + O_{\alpha,\beta}(Q(\xi)),
$$
while, as long as $\Re \beta \neq 0$, we have for $|\xi| < \max(2\pi|\alpha|, 2\pi|\beta|)$,
$$
\frac{\frac{\alpha}{2\pi}-i\xi}{\frac{\beta}{2\pi}-i\xi} = \exp\big[O_{\alpha, \beta}\big(Q(\xi)\big)\big],
$$
since for this region of $\xi$, the left hand side is bounded above, and the right hand side is bounded from below. Hence for all $\xi\in\bb{R}$,
\begin{equation}
\label{L_estimate}
L_{\alpha,\beta}(\xi) = i(\alpha-\beta) J(\xi) + O_{\alpha,\beta}(Q(\xi)).
\end{equation}
Thus for fixed $\alpha,\beta, m$, with $\Re\, \beta \neq 0$,
\begin{align*}
\Bigg| \frac{\zeta\Big(\frac{1}{2} + \frac{\alpha}{\log T} + it\Big)}{\zeta\Big(\frac{1}{2}+ \frac{\beta}{\log T} + it\Big)}\Bigg|^m =& (1+o(1))e^{-m(\alpha-\beta)/2}\exp\big(m\Re\, \langle L_{\alpha,\beta},\Z \rangle\big) \\
=& (1+o(1))e^{-m(\alpha-\beta)/2}\exp\big[O\big(\langle J, \Z\rangle \big) + O\big( \langle Q, \Z\rangle \big) \big].
\end{align*}
Now the theorem at hand follows from Theorem \ref{quad_bound} (our tail bound for zeros) and Corollary \ref{JZ_bound} (our tail bound for oscillatory counts).
\end{proof}

\textbf{Remark:} There is an alternative to the identity \eqref{ratio_linear_stat} that is more exact algebraically. Under RH, it may be seen (for instance, with \cite[Eq. (14.10.5)]{Ti} as a starting point) that for $\Re\, \alpha, \beta > 0$,
\begin{equation}
\label{hadamard_alternative}
\frac{\zeta\Big(\frac{1}{2} + \frac{\alpha}{\log T} + it\Big)}{\zeta\Big(\frac{1}{2}+ \frac{\beta}{\log T} + it\Big)} = \exp\big(\langle L_{\alpha,\beta},\widetilde{\Z}\rangle\big).
\end{equation}
To use this identity in the proof above to treat those values of $\alpha$ or $\beta$ with negative real part, the functional equation must be made use of.

\section{A random matrix interlude}
\label{4}

\subsection{}
In this section, we develop analogues for the unitary group of our tail bound for linear statistics (for zeta zeros this was Theorem \ref{quad_bound}), the determinantal evaluation of correlation functions (for the zeta zeros this was Conjecture \ref{GUEConj}), the evaluation of ratios of the zeta function (this was Conjecture \ref{RatiosConj}), a uniform upper bound on moments of ratios (this was Theorem \ref{ratio_bound}) and also the more technical tail bound for oscillatory linear statistics (this was Lemma \ref{W_markov}). We will make use of these estimates in the next section. We conclude this section by outlining a proof of a tail bound for the logarithmic derivative of a characteristics polynomial, analogous to Theorem \ref{log_bound}. Such an estimate we do not directly need in what follows, but follows easily from the others and appears to be new in the literature.

The unitary group $U(N)$ is the group of $N\times N$ complex matrices $g$ satisfying $g^\ast g = I$. In what follows we endow this group with Haar probability measure. Any such unitary matrix $g$ has $N$ eigenvalues that lie on the unit circle, which we write as $\{e^{i2\pi \theta_1}, ..., e^{i2\pi \theta_N}\}$ with $\theta_i\in[-1/2,1/2)$ for all $i$.

The $k$ level correlations of eigenvalues are in this case known exactly \cite[Eq. (39.12)]{Bu}.

\begin{theorem} [The Weyl-Gaudin-Dyson integration formula]
For $k\leq N$ and any integrable function $\eta: [-N/2,N/2)^k \rightarrow \mathbb{C}$,
$$
\bb{E}_{U(N)} \sum_{\substack{j_1,...,j_k \\ \mathrm{distinct}}} \eta(N\theta_{j_1},...,N \theta_{j_k}) = \int_{[-N/2,N/2)^k} \eta(x) \det_{k\times k}\big(K_N(x_i-x_j)\big)\,d^k x,
$$
where $K_N(x):= \frac{\sin(\pi x)}{N \sin (\pi x /N)}.$
\end{theorem}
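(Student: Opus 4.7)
The plan is to derive this as a standard consequence of the Weyl integration formula together with the observation that the eigenvalue point process is determinantal with kernel $K_N$.

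First I would invoke the Weyl integration formula for $U(N)$: for a conjugation-invariant function $f$,
$$
\int_{U(N)} f(g)\, dg = \frac{1}{N!} \int_{[-1/2,1/2)^N} f(\theta_1,\ldots,\theta_N) \prod_{i<j}|e^{2\pi i \theta_i} - e^{2\pi i \theta_j}|^2\, d\theta_1\cdots d\theta_N.
$$
The key algebraic step is to recognize the Vandermonde-type product as a determinant. Using the symmetric range $m = -(N-1)/2,\ldots,(N-1)/2$, one has $|\det(e^{2\pi i m \theta_j})|^2 = \prod_{i<j}|e^{2\pi i \theta_i}-e^{2\pi i \theta_j}|^2$, and expanding $|\det|^2 = \det(AA^*)$ gives the entries $\sum_m e^{2\pi i m(\theta_i - \theta_j)} = \sin(\pi N(\theta_i-\theta_j))/\sin(\pi(\theta_i-\theta_j))$. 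After the change of variables $x_j := N\theta_j$, the factor $\sin(\pi(x_i-x_j))/\sin(\pi(x_i-x_j)/N) = N K_N(x_i-x_j)$ combines with the Jacobian $N^{-N}$ to yield the rescaled joint density
$$
p(x_1,\ldots,x_N) = \frac{1}{N!}\det_{N\times N}\bigl(K_N(x_i-x_j)\bigr)
$$
on $[-N/2,N/2)^N$.

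Next I would record the two properties of $K_N$ that make it a projection kernel onto the $N$-dimensional span of $\{e^{2\pi i m x/N}\}_{|m|\leq (N-1)/2}$ in $L^2([-N/2,N/2))$: namely, $K_N(0) = 1$ (so $\int_{-N/2}^{N/2} K_N(x-x)\,dx = N$) and the reproducing property $\int_{-N/2}^{N/2} K_N(x-y)K_N(y-z)\,dy = K_N(x-z)$, both of which follow immediately from orthonormality of the Fourier basis over $[-N/2,N/2)$. These are exactly the hypotheses needed to apply the classical ``integrating-out'' lemma for determinantal kernels, which says that whenever these two properties hold,
$$
\int_{-N/2}^{N/2} \det_{n\times n}\bigl(K_N(x_i-x_j)\bigr)\, dx_n = (N-n+1)\det_{(n-1)\times(n-1)}\bigl(K_N(x_i-x_j)\bigr).
$$
Iterating this from $n=N$ down to $n=k+1$ produces a factor $(N-k)!$ and reduces the $N$-dimensional determinant to the $k$-dimensional one.

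Finally, for the expectation on the left-hand side, I would note that
$$
\bb{E} \sum_{j_1,\ldots,j_k\text{ distinct}} \eta(N\theta_{j_1},\ldots,N\theta_{j_k}) = \frac{N!}{(N-k)!}\int \eta(x_1,\ldots,x_k)\Bigl(\int p(x_1,\ldots,x_N)\, dx_{k+1}\cdots dx_N\Bigr) d^k x,
$$
the combinatorial factor $N!/(N-k)!$ counting the ordered choices of the distinguished $k$ indices among $N$. Substituting the joint density from step one and using the iterated reduction of step two, the factors $1/N!$, $N!/(N-k)!$, and $(N-k)!$ cancel exactly, leaving $\int \eta(x) \det_{k\times k}(K_N(x_i-x_j))\,d^k x$. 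The only subtlety worth flagging is ensuring that the integrations can be carried out in this order, which is justified by Fubini together with the boundedness of $K_N$ on $[-N/2,N/2)$; nothing else in the argument presents a real obstacle.
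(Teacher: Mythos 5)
Your proposal is correct, and it is the standard derivation. Note, though, that the paper does not prove this statement at all: it is quoted as a known result, with a citation to Bump's \emph{Lie groups} (Eq.\ (39.12) there), so there is no in-paper argument to compare against. What you have written is precisely the classical route to that formula: the Weyl integration formula, the rewriting of the squared Vandermonde as $\det_{N\times N}$ of the Dirichlet-type kernel via $|\det(e^{2\pi i m\theta_j})|^2=\det(AA^*)$, the rescaling $x_j=N\theta_j$ whose Jacobian $N^{-N}$ cancels the $N^N$ from the entries, and then Gaudin's integrating-out lemma applied to the rank-$N$ projection kernel $K_N$ (you correctly verify the two hypotheses, $\int_{-N/2}^{N/2}K_N(0)\,dx=N$ and the reproducing property, via orthonormality of $e^{2\pi i m x/N}/\sqrt{N}$ on $[-N/2,N/2)$), with the combinatorial factors $1/N!$, $N!/(N-k)!$, $(N-k)!$ cancelling exactly. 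Two small points worth making explicit if you write this up: when $N$ is even the symmetric exponents $m=-(N-1)/2,\dots,(N-1)/2$ are half-integers, which is harmless because you only use the modulus of the determinant (equivalently, you factor a unimodular phase out of each row of the genuine Vandermonde matrix); and the integrating-out lemma itself deserves either a proof or a precise citation, since it is the one nontrivial identity you invoke rather than verify.
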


This implies that for any integrable function $\eta:\bb{R}^k\rightarrow\bb{C}$,
$$
\bb{E}_{U(N)} \sum_{\substack{j_1,...,j_k \\ \mathrm{distinct}}} \eta(N\theta_{j_1},...,N \theta_{j_k}) \sim \int_{\bb R^k} \eta(x) \, \det_{k\times k}\big(K(x_i-x_j)\big)\,d^k x.
$$
This formula of course mirrors the GUE Conjecture, so that the points $\{\tfrac{\log T}{2\pi}(\gamma-t)\}$ may be modeled by the random points $\{N\theta_i\}$.

In fact, instead of the collection of points $\{N\theta_1,...,N\theta_N\}$, it will be even more natural to work with these points pulled back to have period $N$; that is we consider the collection of points $\bigcup_{\nu\in \bb{Z}}\{N(\theta_1+\nu),...,N(\theta_N+\nu)\}$. The reader may check that here too we have,
\begin{align}
\label{pullback_corr}
\notag \bb{E}_{U(N)} \sum_{\substack{j_1,...,j_k \\ \mathrm{distinct}}}\sum_{\nu\in\bb{Z}^k} \eta(N(\theta_{j_1}+\nu),...,N (\theta_{j_k}+\nu)) &= \int_{\bb{R}^k} \eta(x) \det_{k\times k}\big(K_N(x_i-x_j)\big)\,d^k x\\
&\sim \int_{\bb R^k} \eta(x) \, \det_{k\times k}\big(K(x_i-x_j)\big)\,d^k x.
\end{align}

We label the characteristic polynomial of a random unitary matrix $g$ in the following way:
\begin{equation}
\label{char_def}
\Lambda(A):= \det(1-e^{-A}g),
\end{equation}
where $A$ may be any complex number.

Note that
\begin{align}
\label{Lambda_product}
\notag \frac{\Lambda(\alpha/N)}{\Lambda(\beta/N)} &= \prod_{i=1}^N \frac{ e^{-\alpha/2N}\sin \pi(\theta_i + i\alpha/2\pi N)}{e^{-\beta/2N}\sin \pi (\theta_i + i\beta/2\pi N)} \\
&= e^{-(\alpha-\beta)/2}\lim_{V\rightarrow\infty}\prod_{i=1}^N \prod_{\nu=-V}^V \frac{\tfrac{\alpha}{2\pi} - i N(\theta_i+\nu)}{\tfrac{\beta}{2\pi}- iN(\theta_i+\nu)},
\end{align}
where in passing to the last line we have made use of the classical identity
$$
\sin \pi z =  \pi z \prod_{\ell=1}^\infty\Big(1-\frac{z^2}{\ell^2}\Big).
$$

Aside from being useful later on, by comparison with \eqref{hadamard}, the identity \eqref{Lambda_product} makes transparent the similarity between ratios of characteristic polynomials and ratios of the zeta function. For these ratios, we note a formula that, in effect, is due to Borodin, Olshanksi, and Strahov \cite{BoOlSt}.

\begin{theorem}
\label{ratios_unitary_finite}
For complex numbers $A_1,...,A_m$ and $B_1,...,B_m$ with $\Re\, B_\ell \neq 0$ for all $\ell$ and $A_i\neq B_j$ for all $i,j$,
$$
\bb{E}_{U(N)} \prod_{\ell=1}^m \frac{\Lambda(A_\ell)}{\Lambda(B_\ell)} = \frac{\det\Big(\frac{E(NA_i, NB_j)}{e^{A_i}-e^{B_j}}\Big)}{\det\Big(\frac{1}{e^{A_i}-e^{B_j}}\Big)}
$$
\end{theorem}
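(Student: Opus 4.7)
The plan is to follow the strategy of Borodin–Olshanski–Strahov \cite{BoOlSt}: establish the formula on the convenient region $\Re B_j > 0$ for all $j$ by a direct integral computation, and then extend it by a functional-equation manipulation, the factor $E(NA_i, NB_j)$ accounting for the sign-dependent correction.

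\textbf{Step 1 (Base case, $\Re B_j > 0$ for all $j$).} Apply Weyl integration to rewrite the left-hand side as
\begin{equation*}
\frac{1}{N!}\int_{[0,1)^N} \prod_{i=1}^N \prod_{\ell=1}^m \frac{1-e^{-A_\ell}e^{i2\pi\theta_i}}{1-e^{-B_\ell}e^{i2\pi\theta_i}} \, |\Delta(e^{i2\pi\theta})|^2 \, d^N\theta.
\end{equation*}
In this regime $|e^{-B_\ell}|<1$, so the integrand is $L^1$ and the scalar ratio $\prod_\ell (1-e^{-A_\ell}z)/(1-e^{-B_\ell}z)$ admits a clean partial-fraction expansion. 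Writing $|\Delta(e^{i2\pi\theta})|^2 = \det(e^{i2\pi(j-1)\theta_k})\overline{\det(e^{i2\pi(j-1)\theta_k})}$ and invoking the Andreief identity reduces the integral to a single $N \times N$ determinant of Fourier coefficients. Repeated use of Cauchy's determinant identity then collapses this into the target ratio of $m \times m$ determinants with $E \equiv 1$, which matches the claim in this regime since $E(NA, NB) = 1$ when $\Re B > 0$.

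\textbf{Step 2 (Extension to $\Re B_j \neq 0$).} The key algebraic identity is
\begin{equation*}
\frac{1}{1-e^{-B_j+i2\pi\theta}} = -e^{B_j}e^{-i2\pi\theta}\cdot\frac{1}{1-e^{B_j-i2\pi\theta}},
\end{equation*}
which flips a single denominator factor at the cost of a multiplicative monomial. Applied inside the Weyl integration to each $j$ with $\Re B_j < 0$, this introduces a global factor $(-1)^N e^{NB_j}(\det g)^{-1}$ and replaces $B_j$ by $-B_j$ under a conjugation. Pairing these flips with Haar-invariance under $g \mapsto g^{-1}$ (which simultaneously sends each $\Lambda(A_i, g)$ to a scalar multiple of $\Lambda(-A_i, g)$ times $(\det g)^{-1}$) collapses the accumulated $\det g$ powers. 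The net effect is to multiply the $(i,j)$-entry of the numerator determinant from Step~1 by exactly $e^{-NA_i+NB_j}$, i.e.\ by $E(NA_i, NB_j)$, for each column $j$ with $\Re B_j < 0$, while leaving the denominator determinant $\det(1/(e^{A_i}-e^{B_j}))$ unaffected. Iterating across all such $j$ assembles the general formula.

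\textbf{Expected main obstacle.} The nontrivial part is the combinatorial bookkeeping of Step~2: the flips introduce global $(\det g)^{\pm 1}$ factors that are not naturally attached to any single row or column, and one must verify that, after using $g\mapsto g^{-1}$ invariance of Haar measure, these rescalings separate cleanly into the column index $j$ of the numerator determinant rather than mixing columns or altering the denominator. This separation works because Step~1's numerator $\det(1/(e^{A_i}-e^{B_j}))$, after factoring out $\prod_i e^{-NA_i} \prod_j e^{NB_j}$-type monomials, depends on the $A_i$ and $B_j$ only through ratios that absorb the correction $e^{-NA_i+NB_j}$ into the $(i,j)$-entry. The combinatorial verification of this collapse is precisely what \cite{BoOlSt} works out, and I would cite their result rather than rederive it. An alternative route, which avoids the functional equation entirely, is to observe that both sides, multiplied through by $\prod_{i,j}(e^{A_i}-e^{B_j})$ and by suitable global monomials, are entire and of controlled growth in $(e^{\pm A_i}, e^{\pm B_j})$, so that agreement on the open region of Step~1 forces agreement everywhere.
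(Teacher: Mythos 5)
The first thing to note is that the paper does not prove Theorem \ref{ratios_unitary_finite} at all: it is quoted as being, in effect, due to Borodin--Olshanski--Strahov, with the short derivation from their more general result given in section 5.4 of \cite{ChNaNi} and an alternative supersymmetric proof in \cite{KiGu}. So insofar as you ultimately defer the crux to \cite{BoOlSt}, your treatment is in line with the paper's. Two caveats about the independent content of your sketch, though. In your Step 1 regime ($\Re B_j>0$ for all $j$) the right-hand side is identically $1$, since every $E$-factor equals $1$ and the two determinants coincide; this follows in one line from character orthogonality (expand each $\det(1-e^{-A_\ell}g)$ in elementary symmetric polynomials and each $\det(1-e^{-B_\ell}g)^{-1}$ in complete homogeneous ones; only the constant term survives the Haar average). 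The Andreief/Cauchy machinery is not wrong, but it yields nothing beyond this trivial identity, and all of the determinantal structure of the theorem lives in the mixed-sign case --- which your Step 2 describes rather than derives. After flipping the factors with $\Re B_j<0$ and substituting $g\mapsto g^{-1}$, the integrand carries a nontrivial power of $\det g$, so you cannot simply re-apply Step 1; the assertion that the corrections separate cleanly, column by column, into factors $E(NA_i,NB_j)$ multiplying the numerator entries while leaving the denominator untouched is exactly the content of \cite{BoOlSt}, and your sketch presupposes it. As a proof by citation this is acceptable (it is what the paper does), but it should not be presented as mere bookkeeping.

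The proposed alternative route via analytic continuation is genuinely flawed and should be dropped. The left-hand side is not analytic in $B_j$ across $\Re B_j=0$: already for $m=1$ one computes that $\bb{E}_{U(N)}\,\det(1-e^{-A}g)/\det(1-e^{-B}g)$ equals $1$ for $\Re B>0$ but $e^{N(B-A)}$ for $\Re B<0$, two different analytic functions, which is precisely the jump recorded by the factor $E$. The discontinuity arises because the pole of the integrand crosses the unit circle, where the eigenvalues live, and it is not removed by multiplying through by $\prod_{i,j}(e^{A_i}-e^{B_j})$ or by any global monomials; the expectation simply fails to continue across the imaginary axis in $B_j$. Consequently, agreement on the open region $\Re B_j>0$ (where both sides are trivially $1$) cannot force the identity in the mixed-sign regime, and the functional-equation/flip argument --- i.e.\ the actual work in \cite{BoOlSt}, or alternatively the identity (4.35) of \cite{KiGu} --- cannot be bypassed this way.
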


Recall that the function $E$ is defined in Conjecture \ref{RatiosConj}.

In fact, the authors in \cite{BoOlSt} do not prove exactly Theorem \ref{ratios_unitary_finite}, but rather a somewhat more general statement which may be seen with a little work to imply it. An account of this short derivation from \cite{BoOlSt} to Theorem \ref{ratios_unitary_finite} will be found in section 5.4 of the forthcoming paper \cite{ChNaNi}. There is also another proof, based on supersymmetry, in the paper \cite{KiGu}. This paper uses a rather different notation, but Theorem \ref{ratios_unitary_finite} is in fact a specialization of identity (4.35) there.

As a simple corollary,

\begin{cor}
[An asymptotic ratio evaluation]
\label{ratios_unitary_limit}
For complex numbers $\alpha_1,...,\alpha_m$ and $\beta_1,...,\beta_m$ with $\Re\, \beta_\ell \neq 0$ for all $\ell$, and $\alpha_i\neq \beta_j$ for all $i,j$,
$$
\bb{E}_{U(N)} \prod_{\ell=1}^m \frac{\Lambda(\alpha_\ell/N)}{\Lambda(\beta_\ell/N)} \sim \frac{\det\Big(\frac{E(\alpha_i, \beta_j)}{\alpha_i-\beta_j}\Big)}{\det\Big(\frac{1}{\alpha_i-\beta_j}\Big)},
$$
as $N\rightarrow\infty$.
\end{cor}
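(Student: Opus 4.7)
The plan is to apply Theorem \ref{ratios_unitary_finite} with the substitution $A_\ell = \alpha_\ell/N$ and $B_\ell = \beta_\ell/N$, and then pass to the limit $N \to \infty$ on both sides. The left hand side of the resulting identity is exactly the quantity we wish to evaluate, so only the right hand side requires asymptotic analysis.

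Two observations drive the calculation. First, the function $E(\alpha, \beta)$ defined in Conjecture \ref{RatiosConj} depends on its second argument only through the sign of its real part, so for every $N \geq 1$ we have $E(N A_i, N B_j) = E(\alpha_i, \beta_j)$ identically. Second, a Taylor expansion of the exponential gives
$$
e^{A_i} - e^{B_j} = \frac{\alpha_i - \beta_j}{N} + O_{\alpha, \beta}\big(N^{-2}\big),
$$
and since $\alpha_i \neq \beta_j$ by hypothesis, this yields, for $N$ sufficiently large,
$$
\frac{1}{e^{A_i} - e^{B_j}} = \frac{N}{\alpha_i - \beta_j}\Big(1 + O_{\alpha, \beta}(N^{-1})\Big).
$$

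Using multilinearity of the determinant to pull a factor of $N$ from each of the $m$ rows in both the numerator and denominator determinants, the resulting $N^m$ factors cancel between the two. What remains is a ratio of determinants whose individual entries converge, as $N \to \infty$, to $E(\alpha_i, \beta_j)/(\alpha_i - \beta_j)$ in the numerator and $1/(\alpha_i - \beta_j)$ in the denominator. Because the determinant is a polynomial, and hence continuous, function of its entries, the limit passes inside, yielding the claimed asymptotic.

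No significant obstacle is anticipated here, the whole argument being a direct passage to the limit in Theorem \ref{ratios_unitary_finite}. The only mild subtlety worth flagging is the tacit assumption that the $\alpha_i$ are distinct from one another (and likewise the $\beta_j$), which is needed so that the limiting Cauchy determinant $\det\big(1/(\alpha_i - \beta_j)\big)$ in the denominator does not vanish; without this, the stated asymptotic must be interpreted via a limit of ratios.
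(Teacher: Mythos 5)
Your argument is correct and is essentially the paper's intended one: the corollary is stated there as a direct consequence of Theorem \ref{ratios_unitary_finite}, obtained exactly as you do by setting $A_\ell=\alpha_\ell/N$, $B_\ell=\beta_\ell/N$ (so that $E(NA_i,NB_j)=E(\alpha_i,\beta_j)$ trivially, since $NA_i=\alpha_i$ and $NB_j=\beta_j$ exactly), expanding $e^{A_i}-e^{B_j}=(\alpha_i-\beta_j)/N+O(N^{-2})$, cancelling the factor $N^m$ between the two determinants, and passing to the limit by continuity of the determinant. Your closing caveat about distinctness of the $\alpha_i$ (and of the $\beta_j$), needed so the limiting Cauchy determinant does not vanish, is the same tacit convention the paper adopts, so no further work is required.
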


Furthermore, with a little more work,

\begin{cor}
\label{ratios_unitary_bound}
For complex numbers $\alpha,\beta$ with $\Re\, \beta \neq 0$, and for any $m\geq 0$, uniformly in $N$
$$
\bb{E}_{U(N)}\Big|\frac{\Lambda(\alpha/N)}{\Lambda(\beta/N)}\Big|^m \ll_{\alpha, \beta, m} 1.
$$
\end{cor}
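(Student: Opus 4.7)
The plan is to mirror the argument that proved Theorem \ref{ratio_bound}, with \eqref{Lambda_product} playing the role of the Hadamard product \eqref{hadamard}, and with the random matrix analogues of Theorem \ref{quad_bound} and Corollary \ref{JZ_bound} (which are to be established earlier in this section) in place of the tail bounds for zeta zeros.

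First, from \eqref{Lambda_product} we write
$$
\frac{\Lambda(\alpha/N)}{\Lambda(\beta/N)} = \exp\big(\langle L_{\alpha,\beta}, \Xi_N\rangle\big),
$$
where $L_{\alpha,\beta}$ is the logarithm introduced in the proof of Theorem \ref{ratio_bound}, and $\langle L_{\alpha,\beta}, \Xi_N\rangle$ denotes the symmetric limit $\lim_{V\to\infty}\sum_i\sum_{|\nu|\leq V}L_{\alpha,\beta}(N(\theta_i+\nu))$; convergence follows from the decay $L_{\alpha,\beta}(\xi) = O(1/\xi)$ combined with the eigenvalue symmetry $\theta \mapsto -\theta$ (under $g\mapsto g^{-1}$ the points pull back symmetrically), exactly as in the zeta case. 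Since $\Re\beta\neq 0$ the poles of $L_{\alpha,\beta}$ lie off the real axis so no singularities appear.

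Applying the estimate \eqref{L_estimate}, we obtain
$$
\bigg|\frac{\Lambda(\alpha/N)}{\Lambda(\beta/N)}\bigg|^m = \exp\big(m\Re\langle L_{\alpha,\beta}, \Xi_N\rangle\big) \leq \exp\bigl[C_1|\langle J, \Xi_N\rangle| + C_2\langle Q, \Xi_N\rangle\bigr],
$$
where $C_1, C_2$ depend only on $\alpha, \beta, m$, and $\langle J, \Xi_N\rangle$, $\langle Q, \Xi_N\rangle$ are the obvious analogues, on the $U(N)$ eigenvalue process, of the linear statistics $\langle J, \Z\rangle$ and $\langle Q, \Z\rangle$. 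Taking expectations and applying Cauchy--Schwarz to split the two exponential factors, the corollary reduces to showing that each of $|\langle J, \Xi_N\rangle|$ and $\langle Q, \Xi_N\rangle$ has all exponential moments bounded uniformly in $N$. This in turn is a standard consequence of the sub-exponential tail bounds
$$
\bb{P}_{U(N)}(|\langle J, \Xi_N\rangle| \geq x) \ll e^{-Cx\log x}, \qquad \bb{P}_{U(N)}(\langle Q, \Xi_N\rangle \geq x) \ll e^{-Cx\log x},
$$
uniform in $N$: writing $\bb{E} e^{cY} = 1+\int_0^\infty ce^{cy}\bb{P}(Y > y)\,dy$, the integrand is eventually dominated by $e^{cy-Cy\log y}$, which is integrable for any $c$.

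The sole substantive obstacle is establishing these two uniform tail bounds on $U(N)$. Fortunately this step should be genuinely easier than in the zeta setting: the Weyl--Gaudin--Dyson integration formula furnishes exact formulas for moments of linear statistics of eigenvalues, obviating any appeal to the explicit formula and permitting a direct Markov-inequality argument along the same lines as Lemma \ref{moments}. Indeed, as the author notes in the remark referencing \eqref{gaussian_quad_bound_unitary}, one can even obtain sub-gaussian tails in the unitary case, comfortably stronger than what is needed here. Once those tail bounds are in hand, the proof of the corollary is essentially a direct transcription of the proof of Theorem \ref{ratio_bound}.
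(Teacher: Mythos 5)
Your argument is correct in substance, but it takes a genuinely different (and heavier) route than the paper. The paper's proof of this corollary is a few lines and uses no tail bounds at all: by H\"older it suffices to bound an even moment $2k>m$, and then $\big|\Lambda(\alpha/N)/\Lambda(\beta/N)\big|^{2k}$ is rewritten, via $\overline{\det(1-e^{-A}g)}=\det(1-e^{-\overline{A}}g^{-1})$, as a ratio of $2k$ characteristic polynomial factors whose Haar average is evaluated exactly by the Borodin--Olshanski--Strahov formula (Theorem \ref{ratios_unitary_finite}) and is visibly bounded uniformly in $N$, the degenerate case $\alpha=\beta$ being trivial. You instead transplant the proof of Theorem \ref{ratio_bound}: the identity $\Lambda(\alpha/N)/\Lambda(\beta/N)=\exp(\langle L_{\alpha,\beta},\E_N\rangle)$ coming from \eqref{Lambda_product}, the splitting \eqref{L_estimate} into a $J$ part and a $Q$ part, Cauchy--Schwarz, and uniform $e^{-Cx\log x}$ tails for $\langle Q,\E_N\rangle$ and $\langle J,\E_N\rangle$. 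This does work: the $Q$-bound is exactly Theorem \ref{quad_bound_unitary} (proved via Diaconis--Shahshahani trace moments and Poisson summation rather than directly from Weyl--Gaudin--Dyson, but that is immaterial), the $J$-bound is \eqref{J_bound_unitary} (and since $\langle J,\E^o\rangle=0$ by oddness, no analogue of Lemma \ref{J_regular} is needed), and your passage from such tails to uniformly bounded exponential moments is standard. Two caveats: the convergence of the symmetric limit defining $\langle L_{\alpha,\beta},\E_N\rangle$ is not due to a symmetry $\theta\mapsto-\theta$ of the spectrum of a fixed $g$ (that holds only in distribution); it comes from pairing the periodized copies $\nu$ and $-\nu$, exactly as in the sine-product identity \eqref{Lambda_product}. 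Also, the $J$-tail bound \eqref{J_bound_unitary} is only sketched in the paper, and only after this corollary, so your proof obliges you to write that argument out in full. The trade-off is clear: your route is longer but robust, mirroring the zeta-function argument and usable in settings lacking an exact ratio formula, while the paper exploits the exact $U(N)$ structure for a much shorter proof.
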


\begin{proof} From H\"older's inequality, if $2k$ is an even integer larger than $m$
$$
\bb{E}_{U(N)}\Big| \frac{\Lambda(\alpha/N)}{\Lambda(\beta/N)}\Big|^m \leq \Big(\bb{E}_{U(N)}\Big|\frac{\Lambda(\alpha/N)}{\Lambda(\beta/N)}\Big|^{2k}\Big)^{m/2k}.
$$
Let $A:= \alpha/N$ and $B:=\beta/N$, and note that for a unitary matrix $g$,
\begin{align*}
\Big|\frac{\det(1-e^{-A}g)}{\det(1-e^{-B}g)}\Big|^{2k} &= \frac{\det(1-e^{-A}g)^k \det(1-e^{-\overline{A}}g^{-1})^k}{\det(1-e^{-B}g)^k \det(1-e^{-\overline{B}}g^{-1})^k} \\
&= \frac{\det(1-e^{-A}g)^k \det(1-e^{\overline{A}}g)^k}{\det(1-e^{-B}g)^k \det(1-e^{\overline{B}}g)^k}.
\end{align*}
As long as $A\neq B$, the average of this quantity can be computed exactly and seen to be uniformly bounded using Theorem \ref{ratios_unitary_finite}. And if $A=B$ the corollary is trivial.
\end{proof}

\subsection{}
We also have results that mirror Theorem \ref{quad_bound} and Lemma \ref{W_markov} for the linear statistics of (pulled-back) eigenvalues. In analogy with our discussion of zeta zeros, for a matrix $g\in U(N)$ with eigenangles $\{\theta_i\}$ as before, we use the notation
$$
\langle \eta,\, \E \rangle = \langle \eta,\, \E_N(g) \rangle := \lim_{V\rightarrow \infty} \sum_{i=1}^N \sum_{\nu = -V}^V \eta\big(N(\theta_i+\nu)\big),
$$
$$
\langle \eta,\, \E^o \rangle := \lim_{V\rightarrow\infty} \int_{-V}^V \eta(x)\,dx,
$$
$$
\langle \eta, \,\widetilde{\E}\rangle = \langle \eta,\, \widetilde{\E}_N(g) \rangle := \langle \eta,\, \E \rangle - \langle \eta,\, \E^o \rangle,
$$
when these limits exist. Clearly if $\eta$ decays quadratically the limits exist, for any unitary matrix $g$. As before, we sometime substitute $\E$ or $\E_N$ for $\E_N(g)$.

For $\eta = \hat{f}$ with $f\in L^1(\bb{R})$ and of bounded variation, the integral defining $\langle \eta, \E^o \rangle$ may be seen to converge to $(f(0+)+f(0-))/2$ (see \cite[Th. 4.3.4]{Ka} for instance). Likewise, by the Poisson summation formula (see \cite[Th. D.3]{MoVa} for instance) it may be seen for such $\eta$ that the sum defining $\langle \eta, \E \rangle$ converges also. Indeed, in this latter case the Poisson summation formula tells us that
\begin{equation}
\label{poisson1}
\langle \eta,\, \E_N(g) \rangle = \frac{1}{N}\sum_{j\in \bb{Z}} \Tr(g^j) F(-j/N),
\end{equation}
where for typographical reasons we write $F(x):= (f(x+)+f(x-))/2.$ Hence also,
\begin{equation}
\label{poisson2}
\langle \eta,\, \widetilde{\E}_N(g) \rangle = \frac{1}{N}\sum_{j\neq 0} \Tr(g^j) F(-j/N).
\end{equation}

We prove, in analogy with Theorem \ref{quad_bound},

\begin{theorem}
[A tail bound for eigenvalues] 
\label{quad_bound_unitary}
For $Q$ defined as in Theorem \ref{quad_bound}, for all $N\geq 1$ and $x\geq 2$,
$$
\bb{P}\big(\langle Q,\, \E_N\rangle \geq x\big) \ll e^{-C x\log x},
$$
where the constant $C$ and the implicit constant are absolute.
\end{theorem}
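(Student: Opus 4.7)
The plan is to mirror the proof of Theorem \ref{quad_bound} line by line, supplying random-matrix analogues of the three ingredient lemmas \ref{bulk}, \ref{moments}, and \ref{pointwise_regular}. Two of these are essentially trivial in the unitary setting: a pointwise bulk bound $\langle Q,\E_N(g)\rangle \ll N$ for every $g\in U(N)$ (since the periodic sum $\sum_{\nu\in\bb{Z}} Q(N(y+\nu))$ is bounded uniformly in $y$ and $N$, and there are only $N$ eigenangles to sum over), and a regular-part bound $\langle G_k,\E^o\rangle = k\int_{\bb{R}} G(x)\,dx \ll k$ directly from the definition of $\E^o$.

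The real work is a moment estimate
\[
\bb{E}\,|\langle G_k,\widetilde{\E}_N\rangle|^{2\ell} \leq (B_2'\ell)^\ell, \qquad 2\ell \leq k \leq N,
\]
and I would prove it via the Poisson identity \eqref{poisson2}. Writing $F$ for the inverse Fourier transform of $G_k$, which is supported in $[-1/k,1/k]$ with $\sup|F|\ll k$, \eqref{poisson2} gives
\[
\langle G_k,\widetilde{\E}_N(g)\rangle = \frac{1}{N}\sum_{0<|j|\leq N/k}\Tr(g^j)\,F(-j/N).
\]
For $2\ell \leq k$ the ``total length'' $2\ell\cdot N/k$ appearing in an expansion of the $2\ell$-th moment is at most $N$, which is precisely the range in which the Diaconis--Shahshahani formula evaluates joint moments of traces exactly; in that range the collection $\{\Tr(g^j)\}_{j\geq 1}$ behaves as independent complex Gaussians with $\bb{E}\,|\Tr(g^j)|^2=|j|$. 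A Wick-type bookkeeping then yields
\[
\bb{E}\,|\langle G_k,\widetilde{\E}\rangle|^{2\ell} \ll \frac{\ell!}{N^{2\ell}}\Big(\sum_{|j|\leq N/k}|j|\,|F(-j/N)|^2\Big)^\ell \ll \frac{\ell!}{N^{2\ell}}\big(k^2 (N/k)^2\big)^\ell \ll (B_2'\ell)^\ell,
\]
as required. This is arguably cleaner than Lemma \ref{moments}: the exact Diaconis--Shahshahani identity plays the role of Selberg's prime-moment estimate, and there are no arithmetic subtleties.

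With these three ingredients in hand, the deduction of Theorem \ref{quad_bound_unitary} is identical to the deduction of Theorem \ref{quad_bound}: if $x > B_1'N$ the bulk bound forces the probability to vanish, and otherwise one writes $\langle Q,\E\rangle \leq \langle G_k,\widetilde{\E}\rangle + \langle G_k,\E^o\rangle$, chooses the even integer $k\in (x/2B_3'-2,\,x/2B_3']$, and applies Markov's inequality exactly as in the derivation of \eqref{quad_bound_eq}. The main obstacle is the moment computation itself, but since trace moments in the Diaconis--Shahshahani range are exactly Gaussian, the remaining difficulty is really combinatorial bookkeeping (ensuring uniformity in $\ell \leq k/2$) rather than any new analytic input. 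In fact, because the Gaussian variance $\sum_j |j||F(-j/N)|^2$ governs things directly, the same method should yield the sub-Gaussian tail $\ll e^{-Cx^2}$ alluded to in the remark after Theorem \ref{quad_bound}, not merely the $e^{-Cx\log x}$ bound claimed here.
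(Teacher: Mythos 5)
Your argument is correct and is essentially the paper's own proof: the paper's Lemmas \ref{RMTmoments} and \ref{RMTpointwise_regular} are exactly your moment and regular-part estimates, obtained via \eqref{poisson2} and the Diaconis--Shahshahani formula (Theorem \ref{Tr_moments}), and the Markov step with even $k\asymp x$ is identical to the deduction of \eqref{quad_bound_eq} (the paper even dispenses with your bulk bound $\langle Q,\E_N\rangle\ll N$, since unlike the zeta case there is no constraint of the form $k\le\sqrt{T}$ to enforce). One caveat: your closing claim that the same method should yield the sub-Gaussian tail $e^{-Cx^2}$ is not justified, because the identification of trace moments with Gaussian moments is exact only in the Diaconis--Shahshahani range, which here forces $2\ell\le k\asymp x$ and hence caps the Markov bound at $e^{-Cx\log x}$; the stronger bound \eqref{gaussian_quad_bound_unitary} is obtained in the paper by appealing to a genuinely different determinantal-process argument of Tao and Vu, not by this moment computation.
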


\textbf{Remark:} This result is not optimal; in fact one may show,
\begin{equation}
\label{gaussian_quad_bound_unitary}
\bb{P}\big(\langle Q,\, \E_N\rangle \geq x\big) \ll e^{-Cx^2}.
\end{equation}
This follows from a straightforward modification of the argument in \cite[Lemmas 15 and 16]{TaVu}, who are not concerned with the unitary group directly, but prove a similar estimate for the determinantal point process with sine-kernel. Nonetheless, their argument requires some knowledge of the theory of determinantal point processes, and the weaker estimate in Theorem \ref{quad_bound_unitary} will be sufficient for our purposes.

Likewise, in analogy with Lemma \ref{W_markov},

\begin{lem}[Bound on $1/\xi$ stubs for eigenvalues]
\label{W_markov_unitary}
For any $\epsilon\geq 0$ and $k\geq 1/\epsilon^2$, for all $N\geq 1$,
$$
\bb{P}(|\langle (W^{(1/k)})\,\hat{}\,,\,\E_N \rangle| \geq \epsilon) \ll \epsilon^2.
$$
\end{lem}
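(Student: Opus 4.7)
My plan is to mirror the proof of Lemma \ref{W_markov}, substituting the Poisson summation identities \eqref{poisson1}--\eqref{poisson2} for the explicit formula and invoking the exact orthogonality of trace powers on $U(N)$ (Diaconis--Shahshahani) in place of the Selberg-style Dirichlet polynomial moments of Lemma \ref{J_moments}. In fact the argument should be cleaner than its zeta counterpart, because the analogue of $\langle\cdot,\Z^o\rangle$ vanishes exactly in this setting, removing any need for a ``$N$ sufficiently large'' hypothesis.

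First, I would dispense with $\epsilon > 1/2$ trivially. In the remaining range $\epsilon \in (0,1/2]$ one has $k\geq 4$, and the key preliminary observation is that $W^{(1/k)}$ is an odd function, so $(W^{(1/k)})\,\hat{}\,$ is odd, and hence $\int_{-V}^V (W^{(1/k)})\,\hat{}\,(x)\,dx = 0$ for every $V$. This gives $\langle (W^{(1/k)})\,\hat{}\,,\E^o\rangle = 0$ exactly, so $\langle (W^{(1/k)})\,\hat{}\,,\E_N\rangle = \langle (W^{(1/k)})\,\hat{}\,,\widetilde{\E}_N\rangle$ pointwise on $U(N)$, and by Markov's inequality the lemma reduces to the second moment bound $\bb{E}|\langle (W^{(1/k)})\,\hat{}\,,\widetilde{\E}_N\rangle|^2 \ll 1/k^2$.

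For this moment calculation, I would apply \eqref{poisson2} to $f = W^{(1/k)}$, which is compactly supported in $[-1/k,1/k]$, producing a finite sum of the form $\tfrac{1}{N}\sum_{0<|j|\leq N/k} \Tr(g^j)\,W^{(1/k)}(-j/N)$. Since the indices here satisfy $|j|+|j'| \leq 2N/k \leq N/2$, the Diaconis--Shahshahani relation $\bb{E}[\Tr(g^j)\overline{\Tr(g^{j'})}] = \delta_{j,j'}|j|$ applies exactly on this range; orthogonality then collapses the second moment to $\tfrac{1}{N^2}\sum_{0<|j|\leq N/k} |j|\,|W^{(1/k)}(j/N)|^2$, which is $O(1/k^2)$ using the uniform bound $|W^{(1/k)}|\leq 1/2$ on its support. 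Combining with $k\geq 1/\epsilon^2$, Markov's inequality yields $\bb{P}(\cdot\geq \epsilon) \ll 1/(\epsilon k)^2 \leq \epsilon^2$, as required.

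I anticipate no serious obstacle: the only point requiring real care is verifying the range condition needed for Diaconis--Shahshahani, which is immediate from $k\geq 4$. More broadly, this unitary analogue sidesteps the two main complications of Lemma \ref{W_markov}---namely the error term from $\langle \cdot,\Z^o\rangle$ (which forces the ``$T$ sufficiently large'' hypothesis there) and the combinatorial Dirichlet polynomial moment estimate---replacing both by the clean spectral decomposition that $U(N)$ provides via Poisson summation and exact trace orthogonality.
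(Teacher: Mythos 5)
Your proposal is correct and follows essentially the same route as the paper: Poisson summation to write the statistic as $\tfrac{1}{N}\sum_{0<|j|\le N/k}\Tr(g^j)W^{(1/k)}(-j/N)$ (the paper removes the $j=0$/$\E^o$ contribution by noting $(W^{(1/k)}(0^+)+W^{(1/k)}(0^-))/2=0$, which is the same observation as your oddness argument), then the Diaconis--Shahshahani orthogonality to get the second moment $\ll 1/k^2$, Chebyshev for $\epsilon\le 1/2$, and the trivial bound for $\epsilon>1/2$. No gaps.
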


Indeed, these results are proved in much the same way, except that we will replace analytic number theory with a random matrix result of Diaconis and Shashahani \cite{DiSh}\footnote{Though note in this source there is a minor mistake in the statement of the result. This is corrected in, for instance, \cite{DiEv}.}.

\begin{theorem}[Diaconis-Shahshahani]
\label{Tr_moments}
Consider $a = (a_1,...,a_k)$ and $b = (b_1,...,b_k)$ with $a_1,a_2,...,b_1,b_2,... \in \bb{N}_{\geq 0}$. If
$
\sum_{j=1}^k j a_j + \sum_{j=1}^k j b_j \leq 2N,
$
then
\begin{equation}
\label{Eq_TrProd}
\bb{E}_{U(N)}\prod_{j=1}^k \Tr(g^j)^{a_j}\overline{\Tr(g^j)^{b_j}}\,dg = \delta_{ab}\prod_{j=1}^k j^{a_j} a_j!
\end{equation}
\end{theorem}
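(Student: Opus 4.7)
The plan is to reduce this to the orthogonality of irreducible characters of $U(N)$ via standard symmetric function theory. First, I would identify $\prod_j \Tr(g^j)^{a_j} = p_\lambda(z_1,\ldots,z_N)$, the power sum symmetric polynomial evaluated at the eigenvalues, where $\lambda$ is the partition with $a_j$ parts equal to $j$ (so $|\lambda| = \sum j a_j$). Let $\mu$ be defined analogously from $b$. Since the eigenvalues lie on the unit circle, $\overline{\Tr(g^j)} = \Tr(g^{-j})$, so $\overline{p_\mu(z)} = p_\mu(z^{-1})$.

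Next I would expand in Schur polynomials using the standard identity $p_\lambda = \sum_\nu \chi^\nu(\lambda)\, s_\nu$, where $\chi^\nu(\lambda)$ is the character of the irreducible representation of $S_{|\lambda|}$ indexed by $\nu$ evaluated on the conjugacy class $\lambda$, and the sum is over partitions $\nu$ of $|\lambda|$. On $U(N)$ the $s_\nu$ with $\ell(\nu) \leq N$ are precisely the irreducible characters, so the Weyl integration formula (or Peter–Weyl) gives the key orthogonality
\begin{equation*}
\bb{E}_{U(N)}\, s_\nu(g)\, \overline{s_{\nu'}(g)} = \delta_{\nu\nu'}, \qquad \ell(\nu),\ell(\nu') \leq N.
\end{equation*}

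Combining these, one obtains
\begin{equation*}
\bb{E}_{U(N)} \prod_j \Tr(g^j)^{a_j}\overline{\Tr(g^j)^{b_j}} = \sum_{\substack{\nu\,:\,|\nu|=|\lambda|=|\mu| \\ \ell(\nu) \leq N}} \chi^\nu(\lambda)\,\chi^\nu(\mu),
\end{equation*}
since $\chi^\nu(\lambda)$ vanishes unless $|\nu|=|\lambda|$, forcing both $|\lambda|=|\mu|$ and matched $\nu$. If $|\lambda|\neq|\mu|$ (which includes many cases with $a\neq b$), the sum is empty and the result is zero, matching $\delta_{ab}$. When $|\lambda|=|\mu|=:n$, the sum over partitions $\nu$ of $n$ with $\ell(\nu)\leq N$ collapses—via column orthogonality of $S_n$ characters—to $z_\lambda \delta_{\lambda\mu} = \big(\prod_j j^{a_j} a_j!\big)\,\delta_{ab}$, \emph{provided} every partition of $n$ has length at most $N$, i.e.\ $n \leq N$.

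The only delicate point, then, is verifying that the stated hypothesis suffices for this length restriction. In the only case contributing a nonzero term we have $|\lambda|=|\mu|=n$, so the hypothesis $\sum j a_j + \sum j b_j \leq 2N$ becomes $2n \leq 2N$, giving exactly $n\leq N$. This is the sharp threshold and is the main thing to check carefully; after that the identity follows by packaging the three ingredients (expansion of $p_\lambda$ in Schur functions, $U(N)$ character orthogonality, and $S_n$ column orthogonality) as above.
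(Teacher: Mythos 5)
Your argument is correct: expanding $p_\lambda$ in Schur functions, using that the $s_\nu$ with $\ell(\nu)\leq N$ are orthonormal irreducible characters of $U(N)$, and then invoking column orthogonality of $S_n$ characters (valid here because the hypothesis forces $n=|\lambda|=|\mu|\leq N$, so the length restriction on $\nu$ is vacuous) yields exactly $z_\lambda\delta_{\lambda\mu}=\delta_{ab}\prod_j j^{a_j}a_j!$, and the unequal-size case is handled correctly by orthogonality of distinct irreducibles. Note, however, that the paper does not prove this statement at all: it is imported as an external result of Diaconis and Shahshahani, with a footnote pointing to Diaconis--Evans for the corrected hypothesis $\sum_j ja_j+\sum_j jb_j\leq 2N$; your proof is essentially the standard representation-theoretic argument found in those sources, so you have supplied a proof where the paper only cites one.
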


As Diaconis and Shahshahani note, if $C_1, C_2,...$ are independent standard normal complex variables (that is $C_j \law X+iY$ with $X$ and $Y$ independent and identically distributed $\mathcal{N}_\bb{R}(0,1/2)$ variables), then the right hand side of \eqref{Eq_TrProd} may also be written
\begin{equation}
\label{Eq_NProd}
\bb{E} \prod_{j=1}^k (\sqrt{j}C_j)^{a_j} \overline{(\sqrt{j}C_j)^{b_j}}.
\end{equation}
For convenience, by anology with $\Tr(g^{-j}) = \overline{\Tr(g^j)}$, we also define the random variables $C_{-j} := \overline{C_j}$, so that small moments of the traces $\Tr(g^j)$ may be identified with small moments of gaussians. (Though a caution: this identification between moments of $\Tr(g^j)$ and $\sqrt{|j|}C_j$ holds only for \emph{small} moments as in the theorem!)

We are now in a position to prove an analogue of Lemma \ref{moments}.

\begin{lem}
\label{RMTmoments}
For an absolute constant $B_2'$, uniformly for $N\geq 1$ and $2\ell\leq k$, we have
$$
\bb{E} |\langle G_k, \widetilde{\E}_N\rangle|^{2\ell} \leq (B_2' \ell)^\ell.
$$
\end{lem}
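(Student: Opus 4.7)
The strategy parallels Lemma \ref{moments}, but is cleaner because Poisson summation and the Diaconis--Shahshahani identity are both exact. First, since $G_k$ is real and even, $G_k = \hat{f}$ with $f := \hat{G}_k$, and by \eqref{Ghat_support} this $f$ is continuous, compactly supported in $[-1/k,1/k]$, and of bounded variation. Equation \eqref{poisson2} therefore yields
$$\langle G_k,\widetilde{\E}_N(g)\rangle \;=\; \frac{1}{N}\sum_{0 < |j| \leq N/k} \Tr(g^j)\,\hat{G}_k(j/N).$$
When $k > N$ the sum is empty and the lemma is trivial, so we may assume $k \leq N$.

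Second, I would expand the $2\ell$-th power. Since $\hat{G}_k$ is real and even and $\Tr(g^{-j}) = \overline{\Tr(g^j)}$, the linear statistic above is real-valued, so $|\,\cdot\,|^{2\ell} = (\cdot)^{2\ell}$; expanding gives a sum over tuples $(j_1,\ldots,j_{2\ell})$ with $0 < |j_i| \leq N/k$. The hypothesis $2\ell \leq k$ ensures $\sum_i |j_i| \leq 2\ell N/k \leq N$, so Theorem \ref{Tr_moments} applies to every term and permits replacement of $\bb{E}_{U(N)}\prod_i \Tr(g^{j_i})$ by the corresponding Gaussian moment \eqref{Eq_NProd}.

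Third, after this substitution the expectation equals $\bb{E}\,Y^{2\ell}$ for
$$Y \;:=\; \frac{1}{N}\sum_{0 < |j| \leq N/k}\sqrt{|j|}\,C_j\,\hat{G}_k(j/N) \;=\; \frac{2}{N}\sum_{j=1}^{\lfloor N/k\rfloor}\sqrt{j}\,\hat{G}_k(j/N)\,\Re\,C_j,$$
which is a real centered Gaussian of variance
$$\sigma^2 \;=\; \frac{2}{N^2}\sum_{j=1}^{\lfloor N/k\rfloor} j\,|\hat{G}_k(j/N)|^2 \;\leq\; \frac{8B_0^2 k^2}{N^2}\cdot\frac{(N/k)^2}{2} \;\ll\; 1,$$
where I used the pointwise bound $|\hat{G}_k| \leq 2 B_0 k$ from \eqref{Ghat_support}. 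The classical identity $\bb{E}\,Y^{2\ell} = \sigma^{2\ell}(2\ell-1)!! \leq (2\sigma^2\ell)^\ell$ then produces the desired $\bb{E}|\langle G_k,\widetilde{\E}_N\rangle|^{2\ell} \leq (B_2'\ell)^\ell$ for an absolute constant $B_2'$.

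The one point requiring any attention is the Diaconis--Shahshahani constraint $\sum_i |j_i| \leq 2N$: with the support restriction forcing $|j_i| \leq N/k$ and there being $2\ell$ indices in the moment expansion, this is exactly the regime $2\ell \leq k$ stated in the lemma. No H\"{o}lder step or prime-sum estimate is needed, in contrast to Lemma \ref{moments}.
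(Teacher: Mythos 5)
Your proposal is correct and follows essentially the same route as the paper: Poisson summation with the support restriction $\supp \hat{G}_k \subseteq [-1/k,1/k]$, verification that $2\ell \leq k$ keeps all trace exponents within the Diaconis--Shahshahani range so the traces may be replaced by Gaussians, and then an exact Gaussian moment computation with the variance bounded by an absolute constant. The only differences are cosmetic (explicitly noting the statistic is real, handling $k>N$ separately, and bounding $(2\ell-1)!!$ by $(2\ell)^\ell$ rather than via Stirling), none of which changes the argument.
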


\begin{proof}
From \eqref{poisson2},
\begin{align*}
\langle G_k, \widetilde{\E}_N(g)\rangle &= \frac{1}{N} \sum_{j\neq 0} \Tr(g^j) \hat{G}_k(-j/N) \\
&= \frac{1}{N} \sum_{\substack{|j|\leq N/k \\ j\neq 0}} \Tr(g^j) \hat{G}_k(-j/N),
\end{align*}
with the second line following because $\supp \hat{G}_k \subset [-1/k,1/k]$, as in \eqref{Ghat_support}.

We have then
$$
\bb{E} |\langle G_k, \widetilde{\E}_N(g)\rangle|^{2\ell} = \bb{E} \bigg| \frac{1}{N} \sum_{\substack{|j|\leq N/k \\ j\neq 0}} \sqrt{|j|} C_j \hat{G}_k(-j/N)\bigg|^{2\ell},
$$
because one may see that any product $\prod \Tr(g^j)^{a_j} \prod \overline{\Tr(g^{j})^{b_j}}$ that would occur in the expansion of $|\sum \Tr(g^j) \hat{G}_k(-j/N)|^{2\ell}$ must have $\sum j a_j + \sum j b_j\leq 2\ell \cdot N/k \leq N$, which is certainly less than $2N$. Yet, recalling \eqref{G_hat}, we see that $\hat{G}_k$ is even, so that
\begin{align*}
\frac{1}{N} \sum_{j\neq 0} \sqrt{|j|} C_j \hat{G}_k(-j/N) &= \frac{1}{N} \sum_{j > 0} \sqrt{j} (2\Re C_j) \hat{G}_k(j/N) \\
&\law\mathcal{N}_\bb{R}\Big(0, \frac{2}{N^2} \sum_{j\geq 0} j \hat{G}_k(j/N)^2\Big),
\end{align*}
with the last reduction because the random variables $2\Re C_j$ are i.i.d real gaussians with mean $0$ and of variance $2$.

Therefore
$$
\bb{E} |\langle G_k, \widetilde{E}_N(g)\rangle|^{2\ell} = (2\ell-1)!! \Big(\frac{2}{N^2} \sum_{j\geq 0} j \hat{G}_k(j/N)^2\Big)^{\ell},
$$
with $(2\ell-1)!!:= (2\ell-1)\cdot (2\ell-3)\cdots 3\cdot 1.$ From \eqref{Ghat_support}, we know $|\hat{G}_k(x)| \leq k (1-|kx|)_+$, so
$$
\frac{2}{N^2} \sum_{j\geq 0} j \hat{G}_k(j/N)^2 \ll \frac{k^2}{N^2} \sum_{0< j < N/k} j (1- jk/N)_+^2 \ll 1.
$$
Using Stirling's formula to bound $(2\ell-1)!! = (2\ell)!/2^\ell \ell!$, we obtain the lemma.
\end{proof}

Likewise we have an analogue of Lemma \ref{pointwise_regular}.

\begin{lem}
\label{RMTpointwise_regular}
For an absolute constant $B_3'$,
$$
\langle G_k, \E^o \rangle = B_3' k.
$$
\end{lem}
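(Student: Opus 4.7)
The quantity $\langle G_k, \E^o\rangle$ is, by definition,
$$
\langle G_k, \E^o \rangle = \lim_{V\rightarrow\infty} \int_{-V}^V G_k(x)\,dx,
$$
and since $G_k$ decays quadratically this improper integral converges absolutely to $\int_{-\infty}^\infty G_k(x)\,dx$. The plan, therefore, is just to evaluate this integral directly. After the substitution $u = x/k$, we obtain
$$
\int_{-\infty}^\infty G_k(x)\,dx = k\int_{-\infty}^\infty G(u)\,du = k\, \hat{G}(0),
$$
where in the last equality we use that $\hat{G}(0) = \int G(u)\,du$ by the definition of the Fourier transform adopted in the paper.

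It remains to read off $\hat G(0)$ from the explicit formula \eqref{G_hat}, namely $\hat{G}(x) = B_0(1-|x|)_+\bigl(e^{i\pi x/2}+e^{-i\pi x/2}\bigr)$, which gives $\hat{G}(0) = 2B_0$. Thus $\langle G_k, \E^o \rangle = 2B_0\, k$, so the lemma holds with $B_3' := 2 B_0$. There is no real obstacle: the only step requiring even nominal justification is the interchange of the integral with the limit in $V$, which is immediate from absolute integrability of $G_k$.
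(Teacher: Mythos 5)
Your proof is correct and is essentially the paper's argument: the paper simply declares the lemma ``evident from the definition'' of $\langle G_k, \E^o\rangle$, which is exactly the direct evaluation $\int_{-\infty}^\infty G_k(x)\,dx = k\hat{G}(0)$ you carry out. The only difference is that you make the constant explicit, $B_3' = 2B_0$, which is a harmless (and correct) refinement.
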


\begin{proof}
This is evident from the definition of $\langle G_k, \E^o\rangle.$
\end{proof}

We now prove the tail bound for eigenvalues, Theorem \ref{quad_bound_unitary}, in the same manner that we proved Theorem \ref{quad_bound}.

\begin{proof}[Proof of Theorem \ref{quad_bound_unitary}]
For even integers $k$, and all positive $y$,
$$
\bb{P}(\langle G_k, \widetilde{\E}\rangle \geq y) \leq \frac{1}{y^k}\bb{E}|\langle G_k, \widetilde{\E}\rangle|^K \leq \frac{(B_2'k)^{k/2}}{y^k},
$$
yet
\begin{align*}
\bb{P}(\langle Q, \E\rangle \geq x) &\leq \bb{P}(\langle G_k, \widetilde{\E}\rangle + \langle G_k, \E \rangle \geq x) \\
&= \bb{P}(\langle G_k, \widetilde{E} \rangle \geq x - B_3'k).
\end{align*}
With no loss of generality, we may assume $x \geq 4B_3'$ and take $k$ to be the positive even integer satisfying $x/2B_3'-2 \leq k \leq x/2B_3'$. In particular, we have $x-B_3'k \geq x/2$ and the theorem follows, as before by combining the two lines above.
\end{proof}

Our proof Lemma \ref{W_markov_unitary}, the bound for $1/\xi$ stubs, is likewise parallel to that of Lemma \ref{W_markov}.

\begin{proof}[Proof of Lemma \ref{W_markov_unitary}]
From the Poisson summation formula \eqref{poisson1},
$$
\langle (W^{(1/k)})\,\hat{}\,, \E_N\rangle = \frac{1}{N} \sum_{j\neq 0} \Tr(g^j) W^{(1/k)}(-j/N).
$$
(Note that $(W^{(1/k)}(0+)+W^{(1/k)}(0-))/2 = 0$. This enables us to dispense with the $j=0$ term of the summand.)

As $|W^{(1/k)}(x)| \ll 1$ for all $x\in \bb{R}$ and $W^{(1/k)}(x)=0$ for $|x| \geq 1/k$, we see from Theorem \ref{Tr_moments} of Diaconis and Shashahani, as long as $k\geq 2$,
\begin{align*}
\bb{E}|\langle (W^{(1/k)})\,\hat{}\,,\E_N\rangle|^2 &= \frac{1}{N^2} \sum_{j\neq 0} |j| \cdot W^{(1/k)}(-j/N)^2 \\
&\ll \frac{1}{N^2} \sum_{|j|\leq N/k} |j| \\
&\ll \frac{1}{k^2}.
\end{align*}
Now, as in the proof of Lemma \ref{W_markov}, for $\epsilon > 1/2$, trivially,
$$
\bb{P}\big(|\langle (W^{(1/k)})\,\hat{}\,,\E \rangle | \geq \epsilon\big) \leq 1 \ll \epsilon^2.
$$
On the other hand, if $\epsilon \leq 1/2$, then the conditions of the lemma at hand force that $k\geq 2$, so that
\begin{align*}
\bb{P}\big(|\langle (W^{(1/k)})\,\hat{}\,, \E \rangle | \geq \epsilon\big) \leq & \frac{1}{\epsilon^2}\bb{E}\big(|\langle (W^{(1/k)})\,\hat{}\,,\E \rangle |^2\big) \\
\ll& \frac{1}{(\epsilon k)^2} \leq \epsilon^2.
\end{align*}
\end{proof}


\subsection{}
As with the zeta function, we can apply this technique to get tail bounds for the logarithmic derivative of the characteristic polynomial of a unitary matrix, which may be of independent interest. 

\begin{theorem}
\label{log_bound_unitary}
Fix $\alpha > 0$. For all $x \geq 2$ and $N\geq 1$,
$$
\mathbb{P}\Big(\frac{1}{N} \Big|\frac{\Lambda'}{\Lambda}\Big(\frac{\alpha}{N}\Big)\Big| \geq x \Big) \ll e^{-C x\log x},
$$
where the constant $C$ and the implied constant depend only on $\alpha$.
\end{theorem}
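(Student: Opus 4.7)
The plan is to proceed in close analogy with the proof of Theorem \ref{log_bound} for the zeta function, using the Diaconis-Shahshahani moment identity in place of the explicit formula. The first step is to establish the identity
$$
\frac{1}{N}\frac{\Lambda'}{\Lambda}\Big(\frac{\alpha}{N}\Big) = \langle I_\alpha, \E_N\rangle - \tfrac{1}{2}, \qquad I_\alpha(\xi) := \frac{1}{\alpha - i 2\pi \xi},
$$
as the direct analogue of \eqref{log_d_zeta}. This follows by comparing the power series $\frac{\Lambda'}{\Lambda}(A) = \sum_{j\geq 1}\Tr(g^j)e^{-Aj}$ (valid for $\Re\,A > 0$) with the Poisson summation formula \eqref{poisson1} applied to $I_\alpha = \hat{f}$, where $f(x) = e^{\alpha x}\mathbf{1}_{x<0}$; the $j=0$ term, via the midpoint value $f(0)=\tfrac{1}{2}$, furnishes the additive constant.

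Next, as in the proof of Theorem \ref{log_bound}, I would use the pointwise decomposition $I_\alpha(\xi) = iJ(\xi) + R_\alpha(\xi)$ with $|R_\alpha(\xi)| = O_\alpha(Q(\xi))$ (an elementary check from the defining formulas for $I_\alpha$ and $J$). Since $\sum Q$ converges absolutely over the pulled-back eigenvalues,
$$
\Big|\frac{1}{N}\frac{\Lambda'}{\Lambda}\Big(\frac{\alpha}{N}\Big)\Big| \leq |\langle J, \E_N\rangle| + O_\alpha\big(\langle Q, \E_N\rangle\big) + O(1).
$$
The quadratic piece is controlled by Theorem \ref{quad_bound_unitary}, so the theorem reduces, via a union bound, to proving the analogue of Corollary \ref{JZ_bound}, namely $\bb{P}(|\langle J, \E_N\rangle|\geq x) \ll e^{-C x \log x}$.

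For that last tail bound, I would mimic the proof of Theorem \ref{J_bound} almost verbatim. Since $J$ is odd, $\langle J, \E^o\rangle = 0$ trivially, so $\langle J, \E_N\rangle = \langle J, \widetilde{\E}_N\rangle$. Lemma \ref{J_approx} then gives, for any $k\geq 1$,
$$
|\langle J, \E_N\rangle| \leq |\langle (W^{(1/k)})\,\hat{}\,, \E_N\rangle| + A\,|\langle G_k, \widetilde{\E}_N\rangle| + A\,\langle G_k, \E^o\rangle,
$$
and Lemma \ref{RMTpointwise_regular} bounds the last term by $O(k)$. Lemma \ref{RMTmoments} handles the middle term; what is missing is the analogue of Lemma \ref{J_moments}, namely the claim that
$$
\bb{E}\, |\langle (W^{(1/k)})\,\hat{}\,,\, \E_N\rangle|^{2\ell} \leq (B\ell)^\ell\, k^{-2\ell}, \qquad 2\ell \leq k.
$$
By Poisson summation \eqref{poisson1} the underlying sum runs only over $0<|j|\leq N/k$ (using $\supp W^{(1/k)}\subseteq [-1/k,1/k]$ and the vanishing of the midpoint value of $W^{(1/k)}$ at the origin, which is immediate from oddness). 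In this range Theorem \ref{Tr_moments} applies for $2\ell \leq k$, identifying the moment with that of a real Gaussian whose variance one checks to be $\ll 1/k^2$, exactly paralleling the Gaussian computation in Lemma \ref{RMTmoments} but with the extra $k^{-2}$ saving coming from the pointwise bound $|W^{(1/k)}(x)|\ll 1$ (rather than the $k$ bound that $\hat{G}_k$ enjoys at the origin). A Markov/union-bound argument with the choice $k\asymp x$, $\ell$ of the same order, identical to the one in the proof of Theorem \ref{J_bound}, then yields the desired sub-exponential tail.

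The main obstacle, and really the only new ingredient beyond routine bookkeeping, is the verification of the $k^{-2\ell}$ gain in the moment bound just sketched; once that is in place, everything assembles mechanically from the results already proved in the paper.
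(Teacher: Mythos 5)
Your proposal is correct and follows essentially the same route as the paper's own outline: the identity $\tfrac{1}{N}\tfrac{\Lambda'}{\Lambda}(\alpha/N)=\langle I_\alpha,\E_N\rangle-\tfrac12=\langle I_\alpha,\widetilde{\E}_N\rangle$ (the paper derives it via the partial-fraction expansion of $1/(e^{i2\pi z}-1)$, which is equivalent to your Poisson-summation argument with $f(x)=e^{\alpha x}\mathbf{1}_{x<0}$), the decomposition $I_\alpha=iJ+O_\alpha(Q)$, Theorem \ref{quad_bound_unitary} for the quadratic part, and the unitary analogue \eqref{J_bound_unitary} of Theorem \ref{J_bound} for the $J$ part. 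Your sketch of the missing moment bound $\bb{E}|\langle (W^{(1/k)})\,\hat{}\,,\E_N\rangle|^{2\ell}\ll(B\ell)^\ell k^{-2\ell}$ for $2\ell\leq k$ correctly supplies the detail the paper leaves to the reader, exactly in the spirit of Lemma \ref{RMTmoments} and Lemma \ref{W_markov_unitary}.
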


The proof of Theorem \ref{log_bound_unitary} follows closely that of Theorem \ref{log_bound}, and we do not require Theorem \ref{log_bound_unitary} in the remainder of this paper, so we will only indicate the main points here. Note first that much as the proof of Theorem \ref{quad_bound_unitary} follows the proof of Theorem \ref{quad_bound}, by following in turn the proof of Theorem \ref{J_bound}, one may show that
\begin{equation}
\label{J_bound_unitary}
\mathbb{P}(\langle J, \widetilde{\E}_N \rangle \geq x) \ll e^{-C x\log x},
\end{equation}
where $J$ is defined by \eqref{J_def}.

On the other hand, we will show below that
\begin{equation}
\label{log_d_0}
\frac{1}{N}\frac{\Lambda'}{\Lambda}\Big(\frac{\alpha}{N}\Big) = \langle I_\alpha, \widetilde{\E}_N\rangle.
\end{equation}

With this identity in place, exactly as in the proof of Theorem \ref{log_bound}, we note again that $I_a(\xi) = J(\xi) + O(Q(\xi))$, and therefore Theorem \ref{log_bound_unitary} follows from \eqref{log_d_0} and Theorem \ref{quad_bound_unitary}.

We turn therefore to a demonstration of \eqref{log_d_0}. A computation reveals
\begin{equation}
\label{log_d_1}
\frac{1}{N}\frac{\Lambda'}{\Lambda}\Big(\frac{\alpha}{N}\Big) = \frac{1}{N} \sum_{i=1}^N \frac{1}{e^{\alpha/N - i2\pi \theta_i}-1}.
\end{equation}
Using the expansion,
$$
\frac{1}{e^{i2\pi z}-1} = -\frac{1}{2} + \frac{1}{i2\pi} \sum_{\ell=-\infty}^\infty \frac{1}{z-\ell},
$$
(where the infinite sum is understood as a symmetric limit of partial sums), another computation reveals that the right hand side of \eqref{log_d_1} is equal to
\begin{equation}
\label{log_d_2}
-\frac{1}{2} + \sum_{i=1}^N \sum_{\ell=-\infty}^\infty \frac{1}{\alpha - i2\pi N(\theta_i + \ell)}.
\end{equation}
By definition,
$$
\sum_{i=1}^N \sum_{\ell=-\infty}^\infty \frac{1}{\alpha - i2\pi N(\theta_i + \ell)} = \langle I_\alpha, \E_N\rangle,
$$
and from computation
$$
\langle I_\alpha, \E^o \rangle = \lim_{L\rightarrow\infty}\int_{-L}^L \frac{1}{\alpha - i 2\pi \xi} \,d\xi = \frac{1}{2},
$$
so that the expression \eqref{log_d_2} is equal to $\langle I_\alpha, \widetilde{E}_N\rangle$ as claimed. This concludes our outline.

As with other bounds in this paper, probably the quantity $\Lambda'/\Lambda(\alpha/N)$ is in reality subgaussian, but we do not pursue the matter here.

\section{The average of ratios: a proof of Theorem \ref{corr_implies_ratios}}
\label{5}

\subsection{}
We begin our proof of Theorem \ref{corr_implies_ratios} by demonstrating the following proposition.

\begin{prop}
\label{GUE_to_dist}
Assume the GUE Conjecture. Then for any continuous and quadratically decaying function $\eta: \bb{R}\rightarrow \bb{R}$,
$$
\lim_{T\rightarrow\infty} \bb{E}\, e^{\langle \eta, \Z_T\rangle} = \lim_{N\rightarrow \infty} \bb{E}\, e^{\langle \eta, \E_N \rangle},
$$
with both limits existing.
\end{prop}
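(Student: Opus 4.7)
The plan is to leverage the tail bounds, Theorem \ref{quad_bound} on the zeta side and Theorem \ref{quad_bound_unitary} on the unitary side, to pass from convergence of polynomial moments of the linear statistics $\langle \eta, \Z_T\rangle$ and $\langle \eta, \E_N\rangle$ to convergence of their exponential moments. The argument thus breaks into three pieces: (a) show that for every $k\geq 0$ both $\bb{E}\langle\eta,\Z_T\rangle^k$ and $\bb{E}\langle\eta,\E_N\rangle^k$ tend, as $T\to\infty$ and $N\to\infty$ respectively, to a common limit $\Xi_k$; (b) show uniform integrability of $e^{\langle\eta,\Z_T\rangle}$ in $T$ and $e^{\langle\eta,\E_N\rangle}$ in $N$; (c) combine by dominated convergence in the Taylor expansion of the exponential.

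For step (a), I would use a partition expansion. Writing $\langle\eta,\Z_T(t)\rangle^k$ as a $k$-fold sum over zeros (counted with multiplicity) and grouping terms according to which indices $i,j\in\{1,\ldots,k\}$ satisfy $\gamma_i=\gamma_j$, one obtains a decomposition into contributions indexed by partitions $\pi$ of $\{1,\ldots,k\}$, each being a $|\pi|$-fold sum over distinct zeros of the ``collapsed'' test function $\prod_{B\in\pi}\eta^{|B|}(x_B)$ on $\bb{R}^{|\pi|}$. This collapsed test function is continuous and (at least) quadratically decaying in each coordinate, so the GUE Conjecture applies and yields, for each $\pi$, convergence to an integral against a sine-kernel determinant; summing over $\pi$ gives
$$
\bb{E}\langle\eta,\Z_T\rangle^k \longrightarrow \Xi_k := \sum_{\pi}\int_{\bb{R}^{|\pi|}}\prod_{B\in\pi}\eta^{|B|}(x_B)\,\det_{|\pi|\times|\pi|}\!\big(K(x_i-x_j)\big)\,d^{|\pi|}x.
$$
The identical expansion for $\langle\eta,\E_N\rangle^k$, combined with the Weyl--Gaudin--Dyson formula and $K_N(x)\to K(x)$ as in \eqref{pullback_corr}, produces the same limit $\Xi_k$.

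For step (b), continuity together with quadratic decay of $\eta$ furnishes a constant $C_\eta$ with $|\eta|\leq C_\eta Q$, so $|\langle\eta,\Z_T\rangle|\leq C_\eta\langle Q,\Z_T\rangle$ and likewise $|\langle\eta,\E_N\rangle|\leq C_\eta\langle Q,\E_N\rangle$. Theorems \ref{quad_bound} and \ref{quad_bound_unitary} then give a tail bound $\bb{P}(|\cdot|\geq x)\ll e^{-Cx\log x}$ uniform in $T$ and $N$, from which one obtains $\bb{E}\,e^{2|\langle\eta,\Z_T\rangle|}\leq A$ uniformly (and analogously for $\E_N$). Using $(2x)^k/k!\leq e^{2x}$ this yields the summable uniform bound
$$
\frac{\bb{E}|\langle\eta,\Z_T\rangle|^k}{k!}\leq A\cdot 2^{-k},
$$
and the same for $\E_N$. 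Now step (c) is immediate: since $\bb{E}\,e^{|\langle\eta,\Z_T\rangle|}<\infty$ the Taylor series $\bb{E}\,e^{\langle\eta,\Z_T\rangle}=\sum_{k\geq 0}\bb{E}\langle\eta,\Z_T\rangle^k/k!$ converges absolutely, and dominated convergence on $\bb{N}$ (with the dominant $A\cdot 2^{-k}$) delivers $\lim_T \bb{E}\,e^{\langle\eta,\Z_T\rangle}=\sum_k\Xi_k/k!$, with the same limit arising on the unitary side.

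The main obstacle I expect is the bookkeeping in the partition expansion of (a), particularly the unlikely but not excluded case of zeros of higher multiplicity: the clean way to handle this is to treat the multiset of zeros as labeled, so that ``distinct'' in the GUE Conjecture refers to distinct labels rather than geometrically distinct ordinates. Everything else is either already in hand from the tail bounds or is the standard trick of trading moment convergence plus uniform integrability for exponential-moment convergence, precisely the maneuver the pathological example of the introduction was designed to rule out.
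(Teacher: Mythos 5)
Your proposal is correct and follows essentially the same route as the paper: the moment convergence in (a) is exactly the paper's expansion of $\langle \eta, \Z_T\rangle^\ell$ into correlation sums handled by the GUE Conjecture and the Weyl--Gaudin--Dyson formula, and the tail bounds of Theorems \ref{quad_bound} and \ref{quad_bound_unitary} supply the uniform control of the exponential in both arguments. The only (immaterial) difference is packaging: the paper bounds the error from truncating $e^x$ at order $k$ uniformly in $T$ and $N$ and then lets $k\rightarrow\infty$, whereas you derive a uniform bound on $\bb{E}\,e^{2|\langle\eta,\cdot\rangle|}$ and apply dominated convergence term by term in the full exponential series.
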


\begin{proof}
We note in the first place that the GUE Conjecture and the implication \eqref{pullback_corr} of the Weyl-Gaudin-Dyson integration formula imply for any non-negative integer $\ell$ and continuous and quadratically decaying function $\eta$,
\begin{equation}
\label{moment_converge}
\lim_{T\rightarrow\infty} \bb{E}\langle \eta, \Z_T\rangle^\ell = \lim_{N\rightarrow\infty} \bb{E} \langle \eta, \E_N\rangle^\ell.
\end{equation}
This is because both $\langle \eta, \Z_T\rangle^\ell$ and $\langle \eta, \E_N\rangle^\ell$ can respectively be written as a linear combination of correlation sums,
$$
\Delta_j(f_1,...,f_j):= \sum_{\substack{\gamma_1,..,\gamma_k \\ \textrm{distinct}}} f_1\Big(\frac{\log T}{2\pi}(\gamma_1-t)\Big)\cdots f_j\Big(\frac{\log T}{2\pi}(\gamma_j-t)\Big),
$$
and
$$
D_j(f_1,...,f_j):= \sum_{\nu\in\bb{Z}} \sum_{\substack{i_1,...,i_j \\ \textrm{distinct}}} f_1(N(\theta_{i_1}+\nu))\cdots f_j(N(\theta_{i_j}+\nu)),
$$
and on the GUE Conjecture $\Delta_j$ and $D_j$ have the same average as $T,N\rightarrow\infty$. For instance,
$$
\langle \eta, \Z \rangle = \Delta_1(\eta),
$$
$$
\langle \eta, \Z \rangle^2 = \Delta_1(\eta^2) + \Delta_2(\eta,\eta),
$$
$$
\langle \eta, \Z \rangle^3 = \Delta_1(\eta^3) + 3\Delta_2(\eta,\eta^2) + \Delta_3(\eta,\eta,\eta),
$$
and so on, and likewise for $\langle \eta, \E \rangle$.

Now, we note that for $x \geq 0$ and arbitrary $k\geq 0$,
\begin{equation}
\label{exp_ineq}
0 \leq e^x - \sum_{\ell=0}^k \frac{x^\ell}{\ell!} \leq \frac{x^{k+1}}{(k+1)!} e^x,
\end{equation}
as
$$
e^x - \sum_{\ell=0}^k \frac{x^\ell}{\ell!}= \sum_{\ell=k+1}^\infty \frac{x^\ell}{\ell!} \leq \frac{x^{k+1}}{(k+1)!}\sum_{j=0}^\infty \frac{x^j}{j!},
$$
with the inequality following from the relation $\frac{1}{(k+1+j)!} \leq \frac{1}{(k+1)!}\frac{1}{j!}$. Hence,
\begin{align}
\label{poly_approx}
\notag \bigg| \bb{E}\bigg(\,e^{\langle \eta, \Z \rangle} - \sum_{\ell=0}^k \frac{\langle \eta, \Z \rangle^\ell}{\ell!}\bigg)\bigg| &\leq \frac{1}{(k+1)!} \bb{E} \Big( \langle \eta, \Z \rangle^{k+1} e^{\langle \eta, \Z \rangle} \Big) \\
&\leq \frac{1}{(k+1)!} \sum_{r=0}^\infty (r+1)^{k+1} e^r\, \bb{P}\big(\langle \eta, \Z \rangle \in [r,r+1)\big)
\end{align}
Now, for $r\geq 0$, by the tail bound in Theorem \ref{quad_bound},
$$
\bb{P}\big( \langle \eta, \Z \rangle \in [r,r+1)\big) \ll e^{-Cr \log(r+2)},
$$
where the constant $C$ and the implicit constant depend on $\eta$. More trivially, from the Taylor expansion of $e^x$,
$$
(r+1)^{k+1} \leq k!\, (r+1) e^{r+1}.
$$
Applying these estimates to \eqref{poly_approx},
\begin{align*}
\bigg| \bb{E}\bigg(\,e^{\langle \eta, \Z_T \rangle} - \sum_{\ell=0}^k \frac{\langle \eta, \Z_T \rangle^\ell}{\ell!}\bigg)\bigg| &\ll \frac{1}{k+1} \sum_{r=0}^\infty (r+1)e^{2r+1} e^{-Cr \log(r+2)} \\
& \ll \frac{1}{k+1},
\end{align*}
uniformly in $T$.

By the same reasoning (replacing Theorem \ref{quad_bound} with its random matrix analogue Theorem \ref{quad_bound_unitary}),
$$
\bigg| \bb{E}\bigg(\,e^{\langle \eta, \E_N \rangle} - \sum_{\ell=0}^k \frac{\langle \eta, \E_N \rangle^\ell}{\ell!}\bigg)\bigg| \ll \frac{1}{k+1}.
$$
uniformly in $N$.

Hence, applying \eqref{moment_converge} to the above, we see that as $T\rightarrow\infty$,
$$
\bb{E}\, e^{\langle \eta, \Z_T\rangle} = \lim_{N\rightarrow\infty} \bb{E}\, e^{\langle \eta, \E_N\rangle} + O\Big(\frac{1}{k+1}\Big) + o(1).
$$
As $k$ may be chosen arbitrarily, the proposition follows.
\end{proof}

\textbf{Remark:} This theorem is only a slight modification of a standard theorem in probability theory: that the distribution of a point process is controlled by its correlation functions, provided the point process has rapidly decaying tails (c.f. \cite[Lemma 4.2.6]{HoKrPeVi}). In our context, convergence in distribution translates to the claim that if $F$ is bounded and continuous, $\lim_{T\rightarrow\infty} F(\langle \eta, \Z_T\rangle) = \lim_{N\rightarrow\infty} F(\langle \eta, \E_N \rangle)$. The fact that $e^x$ is unbounded entailed additional difficulties over the usual proof.

\subsection{}
We are finally in a position to use the GUE Conjecture to evaluate the average of ratios of the zeta function. 

\begin{proof}[Proof of Theorem \ref{corr_implies_ratios}]
Throughout this proof we take $\beta,\beta_\ell \neq 0$, and regard $m$, and $\alpha, \beta,$ $\alpha_1,...,\alpha_m,$ $\beta_1,...,\beta_m$ to be fixed, with $\alpha_i\neq \beta_j$ for all $i,j$. By \eqref{ratio_linear_stat},
$$
\exp(\langle L_{\alpha,\beta},\Z_T(t)\rangle) = (1+o(1))e^{(\alpha-\beta)/2}\frac{\zeta\Big(\frac{1}{2} + \frac{\alpha}{\log T} + it\Big)}{\zeta\Big(\frac{1}{2}+ \frac{\beta}{\log T} + it\Big)},
$$
uniformly for $t\in[T,2T]$. From this and the bound of powers of ratios, in Theorem \ref{ratio_bound}, one sees that
\begin{equation}
\label{ratio_to_linstat}
\frac{1}{T} \int_T^{2T} \prod_{\ell=1}^m \frac{\zeta\Big(\frac{1}{2} + \frac{\alpha_\ell}{\log T} + it\Big)}{\zeta\Big(\frac{1}{2}+ \frac{\beta_\ell}{\log T} + it\Big)}\,dt = \Big(\prod_{\ell=1}^m e^{(\alpha_\ell-\beta_\ell)/2}\Big)\,\bb{E} \exp(\langle \sum_{\ell=1}^m L_{\alpha_\ell,\beta_\ell}, \Z_T \rangle) + o(1).
\end{equation}
We record the observation, also following from \eqref{ratio_linear_stat}, that 
\begin{equation}
\label{abs_ratios1}
\exp(\Re\,\langle L_{\alpha,\beta},\Z_T(t)\rangle) = (1+o(1))e^{(\alpha-\beta)/2}\bigg|\frac{\zeta\Big(\frac{1}{2} + \frac{\alpha}{\log T} + it\Big)}{\zeta\Big(\frac{1}{2}+ \frac{\beta}{\log T} + it\Big)}\bigg|.
\end{equation}
This implies, of course, that the left hand side of \eqref{abs_ratios1} has a uniformly bounded $m$-th moments for fixed $\alpha, \beta,$ and $m$, with $\beta\neq 0$, by Theorem \ref{ratio_bound}.

We define
$$
L_{\alpha,\beta}^{(1/k)}(\xi):= L_{\alpha,\beta}(\xi) - i(\alpha-\beta) (W^{(1/k)})\,\hat{}\,(\xi).
$$
Intuitively, $L_{\alpha,\beta}^{(1/k)}$ should be thought of as an approximation to the function $L_{\alpha,\beta}(\xi)\mathbf{1}_{|\xi| \leq k}$. In particular, from \eqref{L_estimate} and Lemma \ref{J_approx} -- which demonstrate that both $L_{\alpha,\beta}$ and $(W^{(1/k)})\,\hat{}$ may be decomposed into a linear combination of the function $J$ and a function that decays quadratically -- we see that 
\begin{equation}
\label{L_quad_decay}
L_{\alpha,\beta}^{(1/k)}(\xi) \ll_k Q(\xi).
\end{equation}
Because $(W^{(1/k)})\,\hat{}$ is real valued, we have that for $\alpha, \beta \in \mathbb{R},$ with $\beta\neq 0$,
\begin{equation}
\label{abs_ratios2}
\exp(\Re\,\langle L_{\alpha,\beta}^{(1/k)},\Z_T(t)\rangle) = \exp(\Re\,\langle L_{\alpha,\beta},\Z_T(t)\rangle)
\end{equation}
and so the left hand side of \eqref{abs_ratios2} also has a uniformly bounded $m$-th moments for fixed $\alpha, \beta$ and $m$, with $\beta\neq 0$.

In the proof that follows we let $\epsilon > 0$ be arbitrary but small, and choose $k \geq 1/\epsilon^2$. Defining
$$
A:= \sum_{\ell=1}^m (\alpha_\ell-\beta_\ell),
$$
and returning to \eqref{ratio_to_linstat}, we have
\begin{equation}
\label{decomposition}
\bb{E} \exp(\langle \sum_{\ell=1}^m L_{\alpha_\ell,\beta_\ell}, \Z\rangle) = \bb{E} \exp(\langle \sum_{\ell=1}^m L_{\alpha_\ell,\beta_\ell}^{(1/k)} + iA (W^{(1/k)})\,\hat{}\, ,\Z\rangle).
\end{equation}
We split this average into two parts, writing
$$
H_{\geq \epsilon}:=\{t\in[T,2T]: |\langle (W^{(1/k)})\,\hat{}\,,\Z\rangle \geq \epsilon \},
$$
$$
H_{< \epsilon}:=\{t\in[T,2T]: |\langle (W^{(1/k)})\,\hat{}\,,\Z\rangle < \epsilon \}.
$$
Then \eqref{decomposition} is equal to
\begin{align*}
&\underbrace{\bb{E}\,\mathbf{1}_{H_{\geq \epsilon}}\cdot \exp(\langle \sum_{\ell=1}^m L_{\alpha_\ell,\beta_\ell}^{(1/k)} + iA (W^{(1/k)})\,\hat{}\, ,\Z\rangle)}_{:=M} \\
&+ \underbrace{\bb{E}\,\mathbf{1}_{H_{< \epsilon}}\cdot \exp(\langle \sum_{\ell=1}^m L_{\alpha_\ell,\beta_\ell}^{(1/k)} + iA (W^{(1/k)})\,\hat{}\, , \Z\rangle)}_{:=N}.
\end{align*}
For sufficiently large $T$ (depending on $\epsilon$), by Cauchy-Schwarz,
\begin{align*}
|M| &\leq \sqrt{\bb{P}(H_{\geq \epsilon})} \sqrt{\bb{E}\exp(2\Re\, \langle \sum_{\ell=1}^m L_{\alpha_\ell,\beta_\ell}^{(1/k)} ,\Z\rangle)}\\
&\ll \epsilon,
\end{align*}
with the last line following from Lemma \ref{W_markov} (our bound on $1/\xi$ stubs) to bound $\bb{P}(H_{\geq \epsilon})$ and \eqref{abs_ratios2} to bound the other term. 

On the other hand,
\begin{align*}
N &= \bb{E}\,\mathbf{1}_{H< \epsilon} \exp(\langle \sum_{\ell=1}^m L_{\alpha_\ell,\beta_\ell}^{(1/k)}, \Z\rangle  + O(\epsilon)) \\
&= \bb{E}\, \mathbf{1}_{H< \epsilon} \exp(\langle \sum_{\ell=1}^m L_{\alpha_\ell,\beta_\ell}^{(1/k)}, \Z\rangle) + O\big(\epsilon\cdot \bb{E}\, \mathbf{1}_{H< \epsilon} \exp(\Re\,\langle \sum_{\ell=1}^m L_{\alpha_\ell,\beta_\ell}^{(1/k)}, \Z\rangle),
\end{align*}
as for small $\epsilon$, we have $e^{O(\epsilon)}=1+O(\epsilon).$ Using \eqref{abs_ratios2}, we see that
$$
\bb{E}\, \mathbf{1}_{H< \epsilon} \exp(\Re\,\langle \sum_{\ell=1}^m L_{\alpha_\ell,\beta_\ell}^{(1/k)}, \Z\rangle) \ll 1,
$$
so that
\begin{align*}
N &= \Big(\bb{E}\, \mathbf{1}_{H< \epsilon} \exp(\langle \sum_{\ell=1}^m L_{\alpha_\ell,\beta_\ell}^{(1/k)}, \Z\rangle)\Big)  + O(\epsilon) \\
&= \Big(\bb{E} \exp(\langle \sum_{\ell=1}^m L_{\alpha_\ell,\beta_\ell}^{(1/k)}, \Z\rangle)  - \bb{E}\, \mathbf{1}_{H\geq \epsilon}\exp(\langle \sum_{\ell=1}^m L_{\alpha_\ell,\beta_\ell}^{(1/k)}, \Z\rangle) \Big) + O(\epsilon).
\end{align*}
And as before, for sufficiently large $T$, by Cauchy-Schwarz,\footnote{Note that it is really only in the inequalities that follow that we have exploited the assumption that $\alpha,\beta$ are real. It is from this assumption that we can easily bound $\bb{E}\,\exp(\langle \sum_{\ell=1}^m \Re\,L_{\alpha_\ell,\beta_\ell}^{(1/k)}, \Z\rangle)$ uniformly in $k$, by using that fact that $\Re i A (W^{(1/k)})\,\hat{}\, = 0$.}
\begin{align*}
\bb{E}\, \mathbf{1}_{H\geq \epsilon}\exp(\langle \sum_{\ell=1}^m L_{\alpha_\ell,\beta_\ell}^{(1/k)}, \Z\rangle) &\leq \sqrt{\bb{P}(H_{\geq \epsilon})} \sqrt{\bb{E}\,\exp(2\Re\,\langle \sum_{\ell=1}^m L_{\alpha_\ell,\beta_\ell}^{(1/k)}, \Z\rangle)} \\
&\ll \epsilon.
\end{align*}
Putting everything together, we have that
\begin{equation}
\label{linstat_approx}
\bb{E} \exp(\langle \sum_{\ell=1}^m L_{\alpha_\ell,\beta_\ell}, \Z_T\rangle) = \bb{E} \exp(\langle \sum_{\ell=1}^m L_{\alpha_\ell,\beta_\ell}^{(1/k)}, \Z_T\rangle) + O(\epsilon),
\end{equation}
uniformly for sufficiently large $T$.

In exactly the same manner, this argument may be repeated for eigenvalues of the unitary group, using the results of section \ref{4}. We see that
\begin{equation}
\label{ratio_to_linstat_unitary}
\bb{E}_{U(N)} \prod_{\ell=1}^m \frac{\Lambda(\alpha_\ell/N)}{\Lambda(\beta_\ell/N)} = \Big(\prod_{\ell=1}^m e^{(\alpha_\ell-\beta_\ell)/2}\Big)\,\bb{E} \exp(\langle \sum_{\ell=1}^m L_{\alpha_\ell,\beta_\ell}, \E_N \rangle) + o(1),
\end{equation}
in analogy to \eqref{ratio_to_linstat}, and 
\begin{equation}
\label{linstat_approx_unitary}
\bb{E} \exp(\langle \sum_{\ell=1}^m L_{\alpha_\ell,\beta_\ell}, \E_N\rangle) = \bb{E} \exp(\langle \sum_{\ell=1}^m L_{\alpha_\ell,\beta_\ell}^{(1/k)}, \E_N\rangle) + O(\epsilon),
\end{equation}
uniformly for all $N$, in analogy with \eqref{linstat_approx}.

Using \eqref{ratio_to_linstat} and \eqref{linstat_approx}, we see that
\begin{multline*}
\frac{1}{T} \int_T^{2T} \prod_{\ell=1}^m \frac{\zeta\Big(\frac{1}{2} + \frac{\alpha_\ell}{\log T} + it\Big)}{\zeta\Big(\frac{1}{2}+ \frac{\beta_\ell}{\log T} + it\Big)}\,dt \\= \Big(\prod_{\ell=1}^m e^{(\alpha_\ell-\beta_\ell)/2}\Big)\,\bb{E}\, \exp(\langle \sum_{\ell=1}^m L_{\alpha_\ell,\beta_\ell}^{(1/k)}, \Z_T\rangle) + O(\epsilon) +o(1),
\end{multline*}
as $T\rightarrow\infty$. Likewise, passing from \eqref{ratio_to_linstat_unitary} to \eqref{linstat_approx_unitary},
$$
\bb{E}_{U(N)} \prod_{\ell=1}^m \frac{\Lambda(\alpha_\ell/N)}{\Lambda(\beta_\ell/N)} = \Big(\prod_{\ell=1}^m e^{(\alpha_\ell-\beta_\ell)/2}\Big)\,\bb{E}\, \exp(\langle \sum_{\ell=1}^m L_{\alpha_\ell,\beta_\ell}^{(1/k)}, \E_N\rangle) + O(\epsilon)+o(1),
$$
as $N\rightarrow\infty$. 

Proposition \ref{GUE_to_dist} implies that the main terms on the right hand sides of these identities are asymptotically equal:
$$
\lim_{T\rightarrow\infty}\bb{E}\, \exp(\langle \sum_{\ell=1}^m L_{\alpha_\ell,\beta_\ell}^{(1/k)}, \Z_T\rangle) = \lim_{N\rightarrow\infty} \bb{E}\, \exp(\langle \sum_{\ell=1}^m L_{\alpha_\ell,\beta_\ell}^{(1/k)}, \E_N\rangle).
$$
Hence,
$$
\frac{1}{T} \int_T^{2T} \prod_{\ell=1}^m \frac{\zeta\Big(\frac{1}{2} + \frac{\alpha_\ell}{\log T} + it\Big)}{\zeta\Big(\frac{1}{2}+ \frac{\beta_\ell}{\log T} + it\Big)}\,dt = \lim_{N\rightarrow\infty}\bb{E}_{U(N)} \prod_{\ell=1}^m \frac{\Lambda(\alpha_\ell/N)}{\Lambda(\beta_\ell/N)} + O(\epsilon) + o(1).
$$
Because $\epsilon$ is arbitrary, our theorem now follows from the evaluation in Corollary \ref{ratios_unitary_limit}.
\end{proof}

\subsection{}
We have said that similar methods may be used to show that the GUE Conjecture implies not only the Local Ratio Conjecture with real translations, but in fact the Local Ratio Conjecture in general. We conclude by giving a very brief sketch of how this may be done. We note that in the above argument, the only place we have used the assumption that $\alpha_1,..,\alpha_m, \beta_1,..., \beta_m$ are real is in exploiting the fact that then $\Re i A (W^{(1/k)})\,\hat{}\, = 0$. We do note really need for this term to be $0$ though; we need only for its exponential moments to be uniformly bounded in $k$. That is, if one shows that uniformly for large $k$,
\begin{equation}
\label{W_uniform_bound}
\bb{P}(|\langle (W^{(1/k)})\,\hat{}\,, \Z \rangle | \geq x) \ll e^{-C x\log x},
\end{equation}
this is enough to bound the terms
$$
\bb{E}\,\exp(2\Re\,\langle \sum_{\ell=1}^m L_{\alpha_\ell,\beta_\ell}^{(1/k)}, \Z\rangle)
$$
uniformly in $k$, and the proof proceeds as before. \eqref{W_uniform_bound} in turn may be proven in much the same way as Theorem \ref{quad_bound}, Theorem \ref{J_bound} and Corollary \ref{JZ_bound}.

We note the converse implication, that the Local Ratios Conjecture implies the GUE Conjecture, may be derived from the combinatorial work of Conrey and Snaith \cite[Th. 8]{CoSn2}, along with a uniform bound like Theorem 1.3. Indeed, using a Tauberian argument, it should be possible to show that just the Local Ratio Conjecture with real translations also implies the GUE Conjecture, but we do not treat the matter here.



\end{document}